\documentclass[a4paper,reqno,11pt]{amsart}
\usepackage{amsmath}
\usepackage{amssymb}
\usepackage{cases}
\allowdisplaybreaks[4]
%%%%%%%%%%%%%%%
%\usepackage{showlabels}%%%
%%%%%%%%%%%%%%%

\newtheorem{thm}{Theorem}[section]
\newtheorem{prop}[thm]{Proposition}
\newtheorem{prpty}[thm]{Property}
\newtheorem{lem}[thm]{Lemma}

\newtheorem{cor}[thm]{Corollary}
\newtheorem{prob}[thm]{Problem}

\newtheorem{claim}{Claim}

\theoremstyle{definition}
\newtheorem{definition}[thm]{Definition}
\newtheorem{example}[thm]{Example}
\newtheorem{assum}[thm]{Assumption}

\theoremstyle{remark}
\newtheorem{remark}[thm]{Remark}

\numberwithin{equation}{section}

\newcommand{\bQ}{\mathbb{Q}}
\newcommand{\bP}{\mathbb{P}}
\newcommand\tp{{\tilde{P}}}
\newcommand\mB{{B}}
\newcommand\OO{{\mathcal{O}}}
\newcommand{\rounddown}[1]{\lfloor{#1}\rfloor}
\newcommand{\roundup}[1]{\lceil{#1}\rceil}

%%%%%%% Chen %%%%%%%%%
\newcommand{\bC}{{\mathbb C}}

\newcommand\BB{\mathcal{B}}

\newcommand\ZZ{{\mathbb{Z}}}

\newcommand\lrw{\longrightarrow}

\newcommand\lcm{{\text{l.c.m.}}}
\newcommand\rr{{\tilde{r}}}
%%%%%%% Chen %%%%%%%%%

\begin{document}

\title{On the anti-canonical geometry of $\bQ$-Fano threefolds}
\date{\today}
\author{Meng Chen and Chen Jiang}
\address{\rm Department of Mathematics \& LMNS, Fudan University,
Shanghai 200433, China}
\email{mchen@fudan.edu.cn}
\address{\rm Graduate School of Mathematical Sciences, the University of Tokyo,
3-8-1 Komaba, Meguro-ku, Tokyo 153-8914, Japan.}
\email{cjiang@ms.u-tokyo.ac.jp}

\dedicatory{Dedicated to the memory of Professor Gang Xiao}

%\thanks{The first author was supported by National Natural Science Foundation of China (\#11171068, \#11121101, \#11231003). The second author was supported by Grant-in-Aid for JSPS Fellows (KAKENHI No. 25-6549) and Program for Leading Graduate  Schools, MEXT, Japan}

\begin{abstract}
For a $\bQ$-Fano $3$-fold $X$ on which $K_X$ is a canonical divisor, we investigate the geometry induced from the linear system $|-mK_X|$ and prove that the anti-$m$-canonical map  $\varphi_{-m}$ is birational onto its image for all $m\geq 39$.  By a weak $\bQ$-Fano 3-fold $X$ we mean a projective one with at worst terminal singularities on which $-K_X$ is $\bQ$-Cartier, nef and big. For weak $\bQ$-Fano 3-folds, we prove that  $\varphi_{-m}$ is birational onto its image for all $m\geq 97$. 
\end{abstract} 

\maketitle
\pagestyle{myheadings} \markboth{\hfill  M. Chen \& C. Jiang
\hfill}{\hfill On the anti-canonical geometry of $\bQ$-Fano 3-folds\hfill}

%\tableofcontents

\section{\bf Introduction}
Throughout we work over any algebraically closed field $k$ of characteristic 0 (for instance, $k=\bC$). We adopt the standard notation in Koll\'ar--Mori \cite{KM} and will freely use them.

A normal projective variety $X$ is called a {\it
weak $\bQ$-Fano variety} if $X$ has at worst $\bQ$-factorial terminal singularities and the anti-canonical divisor $-K_X$ is nef and big. A weak
$\bQ$-Fano variety is said to be {\it $\bQ$-Fano} if $-K_X$ is $\bQ$-ample and the Picard number $\rho(X)=1$. According to Minimal Model Program, $\bQ$-Fano varieties form a fundamental class in birational geometry.

Given a $\bQ$-Fano $n$-fold $X$ (resp. weak $\bQ$-Fano $n$-fold $X$), the {\it anti-$m$-canonical map} $\varphi_{-m}$ is the rational map defined by the linear system $|-mK_X|$. By definition, $\varphi_{-m}$ is  birational onto its image when $m$ is sufficiently large.  Therefore it is interesting to find such a practical number $m_n$, independent of $X$, which stably guarantees the birationality of $\varphi_{-m_n}$. Such a number $m_3$ exists due to the boundedness of  $\bQ$-Fano $3$-folds, which was proved by Kawamata \cite{K}, and the boundedness of weak $\bQ$-Fano $3$-folds proved  by Koll\'ar--Miyaoka--Mori--Takagi \cite{KMMT}. It is natural  to consider the following problem.

\begin{prob}\label{b P} {}Find the optimal constant $c$ such that
$\varphi_{-m}$ is birational onto its image for all $m\geq c$ and for all (weak) $\bQ$-Fano
3-folds. 
\end{prob}

The following example tells us that $c\geq 33$.

\begin{example}[{\cite[List 16.6, No.95]{Fletcher}}]
The general weighted hypersurface $X_{66}\subset\mathbb{P}(1,5,6,22,33)$ is a $\mathbb{Q}$-Fano $3$-fold. It is clear that $\varphi_{-m}$ is birational onto its image for $m\geq 33$, but $\varphi_{-32}$ fails to be birational.
\end{example}

It is worthwhile to compare the birational geometry induced from $|mK|$ on varieties of general type with the geometry induced from $|-mK|$ on (weak) $\bQ$-Fano varieties. An obvious feature on Fano varieties is that the behavior of $\varphi_{-m}$ is not necessarily birationally invariant. For example, consider degree $1$ (rational) del Pezzo surface $S_1$ and ${\mathbb P}^2$, $|-K_{{\mathbb P}^2}|$ gives a birational map but $|-K_{S_1}|$ does not. This causes difficulties in studying Problem \ref{b P}.  In fact, even if in dimension 3, there is no known practical upper bound for $c$ in written records. The motivation of this paper is to systematically study $\varphi_{-m}$ on (weak) $\bQ$-Fano 3-folds.

When $X$ is nonsingular, we may take $c=4$ according
to Ando \cite{Ando} and Fukuda \cite{F}. When $X$ has terminal singularities, Problem \ref{b P} was treated by the first author in \cite{C}, where an effective upper bound of $c$ in terms of the Gorenstein index of $X$ is proved (cf. \cite[Theorem 1.1]{C}).  Since, however, the Gorenstein index of a weak $\bQ$-Fano 3-fold can be as large as ``840'' (see Proposition \ref{bound index}),  the number ``$3\times 840+10=2530$'' obtained in \cite[Theorem 1.1]{C} is far from being optimal. It turns out that Problem \ref{b P} is closely related to the following problem (cf. \cite[Theorem 4.5]{C}).

\begin{prob}\label{problem2}
Given a (weak) $\bQ$-Fano 3-fold X, can one find the minimal positive integer $\delta_1=\delta_1(X)$ such that $\dim\overline{\varphi_{-\delta_1}(X)}>1$?
\end{prob}

Problem \ref{problem2} is parallel to the following question on 3-folds of general type:
\begin{quote} 
{\em Let $Y$ be a 3-fold of general type on which $|nK_Y|$ is composed with a pencil of surfaces for some fixed integer $n>0$. Can one find an integer $m$ (bounded from above by a function in terms of $n$) so that $|mK_Y|$ is not composed with a pencil any more?}
\end{quote}
This question was solved by Koll\'ar \cite{Kol86} who proved that one may take $m\leq 11n+5$. The result is a direct application of the semi-positivity of $f_*\omega_{Y/B}^l$ since,  modulo birational equivalence,  one may assume that there is a fibration $f:Y\lrw B$ onto a curve $B$. As far as we know, there is still no known analogy of Koll\'ar's method in treating $\bQ$-Fano varieties.  

Firstly, we shall prove the following theorem. 
\begin{thm}\label{main1}
Let $X$ be a $\mathbb{Q}$-Fano $3$-fold. Then there exists an integer $n_1\leq 10$ such that $\dim\overline{\varphi_{-n_1}(X)}>1$.
\end{thm}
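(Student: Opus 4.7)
The plan is to argue by contradiction: suppose $\dim\overline{\varphi_{-n}(X)}\leq 1$ for every $1\leq n\leq 10$ and deduce a numerical impossibility from Reid's Riemann--Roch formula combined with the classification of baskets of singularities on terminal $\bQ$-Fano threefolds. Since $-K_X$ is ample and $X$ has at worst terminal singularities, Kawamata--Viehweg vanishing gives $H^i(X,-nK_X)=0$ for all $i>0$ and $n\geq 0$, so $P_n:=h^0(X,-nK_X)=\chi(X,-nK_X)$ and Reid's plurigenus formula yields
$$
P_n=1-\frac{n(n+1)(2n+1)}{12}K_X^3-\frac{n}{12}K_X\cdot c_2(X)+\sum_{Q\in\BB}c_Q(n),
$$
where $\BB$ is the Reid basket of virtual cyclic quotient singularities attached to $X$ and $c_Q(n)$ is the standard local correction. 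By Kawamata--Miyaoka, $-K_X\cdot c_2(X)\geq 0$, while $\chi(\OO_X)=1$ imposes integrality constraints on $\BB$.

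For each $n\leq 10$ the hypothesis gives either $P_n\leq 1$ or $|-nK_X|$ is composed with a rational pencil. In the latter case, one writes $-nK_X\sim d_nS_n+F_n$, where $|S_n|$ is a base-point-free pencil with integral irreducible general member $S_n$, $F_n\geq 0$ is the fixed part, and $d_n\geq P_n-1$ (because the image curve in $\PPP^{P_n-1}$ is non-degenerate of degree $\geq P_n-1$). Since $\rho(X)=1$, the divisor $S_n$ is numerically proportional to $-K_X$: $S_n\equiv\alpha_n(-K_X)$ for some $\alpha_n\in\bQ_{>0}$ bounded below by $1/q$, where $q$ is the Fano index of $X$ (the integer with $-K_X\sim qL$ for a generator $L$ of $\Pic(X)/\mathrm{tors}$). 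Combining with $d_n\alpha_n\leq n$ gives the linear bound $P_n-1\leq n/\alpha_n\leq nq$.

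Reid's formula shows that $P_n$ grows as a cubic in $n$, while the pencil bound is linear; these can coexist for all $1\leq n\leq 10$ only for very restricted data. Using the boundedness of admissible baskets for $\bQ$-Fano 3-folds (cf.\ the classification underlying Proposition \ref{bound index}), one enumerates all candidate pairs $(\BB,q)$, computes $P_1,\dots,P_{10}$ explicitly from Reid's formula in each case, and verifies that some $1\leq n_1\leq 10$ satisfies $P_{n_1}>n_1q+1$, forcing $\dim\overline{\varphi_{-n_1}(X)}\geq 2$ and yielding the desired contradiction.

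The main obstacle is the bookkeeping in this enumeration: the pencils $|S_n|$ for different $n$ need not coincide, so the bound $\alpha_n\geq 1/q$ must be argued carefully via the structure of $\Pic(X)/\mathrm{tors}$, and the linear bound has to be checked individually against every admissible basket. The sharpness of the cut-off ``$10$'' reflects the worst-case behaviour of the baskets arising in that classification, where the cubic growth of $P_n$ only overtakes the linear pencil bound at $n=10$.
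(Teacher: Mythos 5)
Your reduction to the inequality $P_{-n}-1\leq d_n\leq n/\alpha_n\leq nq$ is fine as far as it goes (and the paper uses the same elementary fact $d_n=P_{-n}-1$ for a rational pencil), but the claimed contradiction ``cubic growth of $P_{-n}$ overtakes the linear pencil bound by $n=10$'' simply does not hold for the extremal cases, so the proof collapses exactly where the theorem is hard. For the baskets that force $n_1=10$ (e.g.\ $\{5\times(1,2),2\times(1,3),(2,7),(1,4)\}$ with $-K_X^3=1/84$, or the other No.1--No.4 baskets of Theorem \ref{p0}), Reid's formula gives $P_{-1}=P_{-2}=P_{-3}=0$, $P_{-8}=2$ and $P_{-10}$ only around $3$; since $\tfrac{1}{12}n(n+1)(2n+1)(-K_X^3)$ at $n=10$ is roughly $2$ when $-K_X^3$ is of order $1/84$ or $1/330$, the anti-plurigenera up to $n=10$ never exceed a bound of the form $nq+1\geq n+1$. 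So no choice of $n_1\leq 10$ satisfies $P_{-n_1}>n_1q+1$, and the purely numerical scheme of ``enumerate baskets, compute $P_{-1},\dots,P_{-10}$, compare with the linear bound'' cannot produce the desired contradiction. (There is also a secondary issue: $S_n$ is only a Weil divisor, so the relevant index is the $\bQ$-Fano/Weil index rather than the index of $\Pic(X)/\mathrm{tors}$, and you give no bound on it; but even granting $q=1$ the numerics above already defeat the argument.)

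What is missing is precisely the structural input the paper develops. First, a generalization of Alexeev's double-difference argument (Theorem \ref{k1}): using Kawamata--Viehweg vanishing, $\rho(X)=1$ (so that any partial divisor $E$ and $-mK_X-E$ are both ample), and a careful analysis of the local terms $c_Q$, one shows under congruence conditions on $m$ modulo the local indices that $|-mK_X|$ has no fixed part and, if $P_{-m}\geq 3$, is not composed with a pencil. Second, Theorem \ref{k2}, which tracks how fixed parts and pencil structures propagate among the multiples $|-lmK_X|$ and converts an assumed pencil structure into sharp upper bounds on $P_{-lm}$ that are much stronger than $lmq+1$ (namely $P_{-lm}\leq s+1$ with $l=sn_0+t$). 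These bounds, fed into the Chen--Chen weighted-basket machinery ($\gamma(B)>0$, $-K^3>0$, packings), pin down the possible baskets; and for the ten residual baskets one still needs genuinely geometric arguments (Theorem \ref{ex}) with explicit prime divisors, local indices at the $(1,5)$ or $(6,13)$ points, and a torsion-freeness input from Prokhorov. None of these steps is recoverable from the linear degree bound you propose, so the gap is not bookkeeping but the absence of the key mechanism that makes small anti-plurigenera incompatible with being composed with a pencil.
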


Theorem \ref{main1} is close to be optimal due to the following example.
\begin{example}[{\cite[List 16.7, No.85]{Fletcher}}]
Consider the general codimension $2$ weighted complete intersection $X:=X_{24,30}\subset\mathbb{P}(1,8,9,10,12,15)$ which is a $\mathbb{Q}$-Fano $3$-fold. Then $\dim\overline{\varphi_{-9}(X)}>1$ while $\dim\overline{\varphi_{-8}(X)}=1$.
% since $P_{-8}=2$.
\end{example}

In fact, theoretically, there are only 4 possible weighted baskets for which we need to take $n_1=10$ (see Remark \ref{optimal remark} and Subsection \ref{exceptional} for more details and discussions).  Theorem \ref{main1} allows us to prove the following result. 

\begin{thm}\label{birationality1} 
Let $X$ be a $\bQ$-Fano 3-fold. Then $\varphi_{-m}$ is birational onto its image for all $m\geq 39$. 
\end{thm}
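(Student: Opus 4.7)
The plan is to combine Theorem \ref{main1} with a two-step restriction argument to reduce birationality of $\varphi_{-m}$ to a degree estimate on a curve. By Theorem \ref{main1} fix $n_1 \leq 10$ with $\dim \overline{\varphi_{-n_1}(X)} \geq 2$. On a resolution $\pi \colon W \to X$ that resolves the base locus of $|-n_1 K_X|$, write $\pi^*(-n_1 K_X) = M_{n_1} + E_{n_1}$ with $M_{n_1}$ the base-point-free movable part and $E_{n_1}$ effective, and let $S \in |M_{n_1}|$ be a general member. Since the image has dimension at least two, $S$ is irreducible and smooth, and $M_{n_1}|_S$ remains movable on $S$.

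Next, given two general points $x_1, x_2 \in X$, I want to find an irreducible curve $C \subset S$ through (the strict transforms of) $x_1$ and $x_2$ which moves in a positive-dimensional family on $S$. The natural candidate is a general member of a suitable sub-linear system of $|M_{n_1}|_S|$, whose genus is controlled by adjunction on $S$. Birationality of $\varphi_{-m}$ then reduces, via the standard Tankeev-type argument, to producing a surjection
\[
H^0\bigl(W,\pi^*(-mK_X)\bigr) \twoheadrightarrow H^0\bigl(C,(-mK_X)|_C\bigr)
\]
together with the degree inequality $\deg\bigl((-mK_X)|_C\bigr) > 2g(C)$.

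The surjection comes from two successive applications of Kawamata--Viehweg vanishing: first to peel off $S$ from $W$ by writing $\pi^*(-mK_X) - S$ as a big and nef $\bQ$-divisor plus the round-down of an effective fractional boundary supported in $E_{n_1}$, then to peel off $C$ from $S$ by the analogous computation on the smooth surface $S$. The controlling intersection numbers are $(-K_X)^3$, $(-K_X)^2 \cdot S$ and $(-K_X) \cdot C$, each of which admits an effective lower bound in terms of the weighted basket of $X$ and the invariant $n_1$. Plugging $n_1 \leq 10$ together with the basket-theoretic bounds into the resulting Riemann--Roch inequality should force the degree estimate to hold whenever $m \geq 39$.

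The principal obstacle is expected to be the borderline baskets where $n_1 = 10$ is sharp, namely the four cases singled out in Remark \ref{optimal remark}. For those, the naive estimate barely fails, and the bound $m \geq 39$ can only be reached by using the precise form of Reid's Riemann--Roch on $X$, the explicit basket data, and the vanishing of low pluri-anti-genera, to refine the generic estimate $m \gtrsim 4n_1$ down to the advertised threshold. A secondary technical point is to ensure that the chosen curve $C$ avoids $\operatorname{Supp} E_{n_1}$ and the $\pi$-exceptional locus, which forces choosing $S$ and $C$ sufficiently generically at each step of the construction.
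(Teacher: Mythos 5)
Your overall architecture (surface from a non-pencil anticanonical system, curve on the surface, two Kawamata--Viehweg vanishing steps, then a degree bound on the curve) is the same general machine the paper builds in Section \ref{section birationality}, but as written the proposal has a genuine gap exactly where the constant $39$ has to be produced. The crux is the lower bound on $\zeta=\pi^*(-K_X)\cdot C$: this is \emph{not} an intersection number that ``admits an effective lower bound in terms of the weighted basket'' in any direct way. The paper has to split into cases by $g(C)$ (Proposition \ref{b zeta}): $g(C)=0$ gives $\zeta\geq 2$, $g(C)\geq 2$ allows a bootstrap $\zeta\geq\frac{2g(C)-1}{\mu_0+m_1}$, but in the dangerous case $g(C)=1$ one only gets $\zeta\geq \frac{1}{r_{\max}}$ (or $\frac{1}{\nu_0 r_{\max}}$), so the criterion becomes roughly $m\geq \mu_0+m_1+2r_{\max}$. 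With your single parameter $m_0=m_1=n_1\leq 10$ this does not reach $39$: for the basket No.3 ($r_{\max}=11$, $\delta_1=10$) you get $10+10+22=42$, for No.4 ($r_{\max}=10$) you get $40$, and for No.D $=\{4\times(1,2),(6,13),(1,5)\}$ (where $\delta_1=8$, $r_{\max}=13$) you get $8+8+26=42$. Your closing remark that the borderline cases ``can only be reached by using the precise form of Reid's Riemann--Roch'' concedes the point but supplies no mechanism; plain Riemann--Roch refinements do not close these gaps.

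What the paper actually does differently, and what you would need, is a two-parameter setup: the surface $S$ is taken from $|-m_0K_X|$ with $m_0$ the \emph{minimal} integer with $P_{-m_0}\geq 2$ (so $m_0\leq 8$ by \cite{CC}, and often much smaller; when $|-m_0K_X|$ is a pencil one even gets $\mu_0\leq m_0/\iota(m_0)$), while the curve comes from restricting $|-m_1K_X|$ for an $m_1$ such that the two systems are not composed with the same pencil --- this is where Theorem \ref{main1} enters, not as the source of $S$. On top of that, the borderline baskets require ad hoc arguments to \emph{lower} $m_1$: for No.3 one must show $|-8K_X|$ and $|-9K_X|$ are composed with different pencils (giving $8+9+2\cdot 11=39$), and for No.D that $|-6K_X|$ and $|-7K_X|$ are (giving $6+7+2\cdot 13=39$); both proofs use the prime-divisor structure of $|-2K_X|$ or $|-4K_X|$ and local-index computations at the high-index point, and nothing in your outline produces them. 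Separately, your requirement that $C$ avoid $\operatorname{Supp}E_{n_1}$ and the $\pi$-exceptional locus is neither achievable nor desirable: in the $g(C)=1$ case the bound $\zeta\geq \frac{1}{r_{\max}}$ comes precisely from $C$ meeting the exceptional divisor $E_\pi$.
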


A key point in proving Theorem \ref{main1} is that we have $\rho(X)=1$, which is not the case for arbitrary weak $\bQ$-Fano 3-folds. Therefore we should study weak $\bQ$-Fano 3-folds in an alternative way.  Our result is as follows. 

\begin{thm}\label{main2}
Let $X$ be a weak $\mathbb{Q}$-Fano $3$-fold. Then $\dim\overline{\varphi_{-n_2}(X)}>1$ for all $n_2\geq 71$.
\end{thm}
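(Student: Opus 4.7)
The plan is to proceed by contradiction and reduce to a combinatorial analysis of the singular basket of $X$. Suppose there exists some $m_0\geq 71$ for which $\dim\overline{\varphi_{-m_0}(X)}\leq 1$. Because $-K_X$ is nef and big, Kawamata--Viehweg vanishing yields $h^i(\OO_X)=0$ for $i>0$ (so $\chi(\OO_X)=1$ and in particular $q(X)=0$) and $h^0(X,-mK_X)=\chi(-mK_X)$, which grows like $m^3$. Thus the image of $\varphi_{-m_0}$ is neither a point nor higher-dimensional, so it is a curve; since $q(X)=0$, this curve is rational. In other words, $|-m_0K_X|$ is composed with a rational pencil, and writing its moving part as $a_{m_0}S$, with $S$ a general prime member and $a_{m_0}=h^0(X,-m_0K_X)-1$, one has
\[
-m_0K_X\sim a_{m_0}S+Z_{m_0},\qquad Z_{m_0}\geq 0.
\]

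\textbf{The two opposing estimates.} On the Riemann--Roch side, Reid's orbifold formula expresses $\chi(-mK_X)$ as an explicit function of $(-K_X)^3$, $-K_X\cdot c_2(X)$, and the basket $B_X$ of virtual terminal cyclic quotients. The boundedness of weak $\bQ$-Fano 3-folds (\cite{K},\cite{KMMT}), together with Miyaoka's inequality $-K_X\cdot c_2(X)\geq 0$, confines the triple $(B_X,(-K_X)^3,\chi(\OO_X))$ to a finite list of ``weighted baskets'', each producing an explicit cubic-in-$m$ lower bound on $a_m$. On the pencil side, intersecting the relation above with the nef class $(-K_X)^2$ and using $(-K_X)^2\cdot Z_{m_0}\geq 0$ gives
\[
m_0(-K_X)^3\;\geq\;a_{m_0}\,(-K_X)^2\cdot S,
\]
which, combined with a lower bound on $(-K_X)^2\cdot S$ coming from adjunction $K_S=(K_X+S)|_S$ and the geometry of a general member of the rational pencil, yields a competing upper bound on $a_{m_0}$.

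\textbf{Contradiction and main obstacle.} Comparing the Riemann--Roch lower bound with the pencil-type upper bound basket by basket, one shows that the two are incompatible as soon as $m_0\geq 71$, completing the proof. The principal obstacle is combinatorial rather than conceptual: since the Gorenstein index of a weak $\bQ$-Fano 3-fold can be as large as $840$ (Proposition \ref{bound index}), the list of weighted baskets to analyse is sizeable, and the delicate part is to isolate the few borderline baskets that force the precise constant $71$ while uniformly handling all remaining possibilities. A variant of Reid's $P_2$-trick, transposed to anti-plurigenera, should organise the enumeration and keep the case analysis tractable.
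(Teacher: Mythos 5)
Your overall shape of argument --- playing a linear-in-$m$ upper bound on $P_{-m}$ forced by the pencil structure against a Riemann--Roch lower bound that grows faster --- is indeed the paper's strategy (Corollary \ref{b non-pencil} versus Propositions \ref{b thm1} and \ref{b thm2}). But the step you gloss over is exactly where the content lies, and as stated it does not work. The inequality $m_0(-K_X)^3\geq a_{m_0}\,(-K_X)^2\cdot S$ is useless without an explicit positive lower bound on $(-K_X)^2\cdot S$, and surface adjunction $K_S=(K_X+S)|_S$ does not supply one: on a resolution it only rewrites $(\pi^*(-K_X)|_S)^2$ through $\pi^*(-K_X)|_S\cdot(-K_S)$ plus non-negative terms, and $-K_S$ has no a priori sign or size, so no quantitative bound comes out. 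The paper's crucial input is Lemma \ref{b L^2}: $r_X\,\pi^*(-K_X)^2\cdot S$ is a positive \emph{integer}, proved by pulling back to a resolution and replacing one factor $\tau^*(-K_X)$ by $-K_{\hat W}$ (the exceptional correction dies by the projection formula), which gives $(-K_X)^2\cdot S\geq 1/r_X$ and hence $P_{-m}\leq r_X(-K_X)^3m+1$ whenever $|-mK_X|$ is composed with a pencil. The precise form of this bound matters: the cruder integrality $r_X^2(-K_X)^2\cdot S\in\ZZ$ would only give $1/r_X^2$ and a threshold of order $\sqrt{6}\,r_X\approx 2000$, nowhere near $71$. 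So the central quantitative step of your plan is a genuine gap.

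Secondarily, your description of the other side is not how the constant $71$ arises, and the enumeration you propose is not carried out. The paper does \emph{not} list weighted baskets here (that is the Section \ref{section non-pencil} method, which needs $\rho(X)=1$); instead it bounds the periodic correction $l(-n)$ in Reid's formula uniformly via $\sum_i(r_i-\frac{1}{r_i})\leq 24$, and combines this with two global inputs: $-K_X^3\geq \frac{1}{330}$ from \cite{CC} and $r_X=840$ or $r_X\leq 660$ from Proposition \ref{bound index}. The threshold is then essentially $\sqrt{6r_X}$ corrected by these estimates, and $71>\sqrt{6\cdot 840}$ is exactly what closes the $r_X=840$ case; the "borderline" objects are the two index-$840$ baskets, not a residue of a large case analysis. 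A literal basket-by-basket comparison would be finite in principle but enormous, would still require the $1/r_X$ bound above for each basket, and nothing in your sketch shows it terminates at $71$; you would need to supply the uniform estimate of $l(-n)$ (or an equivalent) to make the comparison effective.
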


Theorem \ref{main2} allows us to study the birationality. 

\begin{thm}\label{birationality2} 
Let $X$ be a weak $\bQ$-Fano 3-fold. Then $\varphi_{-m}$ is birational onto its image for all $m\geq 97$. 
\end{thm}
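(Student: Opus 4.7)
The plan is to upgrade the ``non-pencil'' conclusion of Theorem \ref{main2} to full birationality via the classical three-step reduction: pass from $X$ to a general movable surface $S$ extracted from a small anti-pluricanonical system, then from $S$ to a general irreducible curve $C\subset S$, and finally verify a numerical criterion on $C$. This is the same framework used in \cite[Theorem 4.5]{C}, now fed with the optimized threshold $n_2=71$ coming from Theorem \ref{main2}.

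Concretely, I would fix a common log-resolution $\pi\colon Y\to X$ that resolves the base loci of $|-nK_X|$ for all $n$ in the range we need, and write $\pi^{*}(-n_2K_X)=M+F$ with $M$ movable and $F$ effective. Since $\dim\varphi_{|M|}(Y)\ge 2$ by Theorem \ref{main2} with $n_2=71$, a general element of $|M|$ contains an irreducible component $S$ that moves in an algebraic family covering $Y$. For $m\ge 97$, the standard Kawamata--Viehweg vanishing applied to $\pi^{*}(-mK_X)-S-K_Y$, combined with $K_Y=\pi^{*}K_X+(\text{exceptional})$, yields the surjectivity
\[
H^{0}\bigl(Y,\,\pi^{*}(-mK_X)\bigr)\twoheadrightarrow H^{0}\bigl(S,\,\pi^{*}(-mK_X)|_S\bigr).
\]
Because such $S$ cover $Y$, birationality of $\varphi_{-m}$ is reduced to birationality of $|\pi^{*}(-mK_X)|_S|$ on $S$. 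Repeating the same pattern on $S$, I would extract a general curve $C$ via Bertini applied to the moving part, use a further Kawamata--Viehweg vanishing on the surface to lift sections to $C$, and reduce at last to the numerical condition $\deg\bigl(\pi^{*}(-mK_X)|_C\bigr)\ge 2g(C)+1$, which forces separation of two general points. The bookkeeping is driven by $(-K_X)^{3}$, $(-K_X)^{2}\cdot S$ and $(-K_X)\cdot S\cdot(\text{moving part on }S)$, each estimated through Reid's Riemann--Roch together with the basket of terminal singularities; the constant $97$ is what drops out after balancing the three losses.

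The main obstacle, and the reason $97$ exceeds the $39$ of Theorem \ref{birationality1}, is the absence of $\rho(X)=1$: the extracted surface $S$ cannot be rewritten as a rational multiple of $-K_X$, so on $S$ one only knows that $(-K_X)|_S$ is nef and big, and the resulting lower bound on $(-K_X)|_S\cdot C$ is correspondingly weaker. A further subtlety is that the Gorenstein index of $X$ can be as large as $840$ by Proposition \ref{bound index}, forcing the whole reduction to be carried out with $\bQ$-divisors and round-downs; controlling the fractional parts at each step so that the final numerical inequality still holds for every $m\ge 97$, rather than introducing any genuinely new geometric input, is the delicate point of the argument.
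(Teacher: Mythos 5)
There is a genuine gap: your outline reproduces the general restriction framework (surface, then curve, Kawamata--Viehweg vanishing, then a degree condition on $C$), but it does not and cannot produce the constant $97$ as set up, and it omits the steps where the actual work of the paper lies. First, you extract $S$ from the movable part of $|-n_2K_X|$ with $n_2=71$; the vanishing step then forces you to subtract a divisor numerically close to $71\,\pi^*(-K_X)$ before restricting to $S$, and again a comparable amount before restricting to $C$, so the resulting inequality reads roughly $(m+1-71-m_1)\cdot\zeta>2$, which is hopeless for $m$ near $97$. The paper instead takes $S$ as a generic irreducible element of $|M_{-m_0}|$ with $m_0\leq 8$ (from \cite[Theorem 1.1]{CC}), allowing $|-m_0K_X|$ to be a pencil, and uses the non-pencil level $m_1$ only to cut the curve family on $S$; the loss at the first step is then $\mu_0\leq m_0/\iota(m_0)\leq 8$, not $71$. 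Second, your claim that the bound on $\deg\bigl(\pi^*(-mK_X)|_C\bigr)$ comes from Reid's Riemann--Roch is off the mark: the delicate case is $g(C)=1$, where the paper proves $\zeta=\pi^*(-K_X)\cdot C\geq 1/(\nu_0 r_{\max})$ by a coefficient/integrality argument on $E_\pi$ or on $\pi^*D$ for $D\in|-\nu_0K_X|$ (Proposition \ref{b zeta}), together with the dichotomy $\zeta\geq 2$ for $g(C)=0$ and $\zeta\geq 3/(\mu_0+m_1)$ for $g(C)\geq 2$. Third, you never address separating distinct members of the surface family and of the curve family (Assumption \ref{b asum}), which in the paper needs $h^0(-(m-m_0)K_X)>0$ and Tankeev's lemma (Propositions \ref{b a1}, \ref{b a2}); a covering family alone does not reduce birationality to the restricted system.

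Finally, the sentence ``the constant $97$ is what drops out after balancing the three losses'' is exactly the content that must be proved, and it is not a formal balancing act. In the paper the threshold $97$ emerges from a long case division on $P_{-1},P_{-2}$, $r_{\max}$ and $r_X$: one bounds $r_X$ case by case (often well below the a priori $840$), feeds $r_X$, $-K_X^3\geq 1/330$ and $r_{\max}$ into Proposition \ref{b thm1} with a case-specific parameter $t$ to make $m_1$ as small as possible (e.g.\ $m_1=65$ when $r_X\leq 660$, $r_{\max}\leq 12$, giving the extremal value $\lfloor\mu_0\rfloor+m_1+2\nu_0 r_{\max}\leq 8+65+24=97$ in Theorem \ref{b main}(iii)), and then treats a handful of residual baskets (e.g.\ $r_X=630$, $462$, $546$, $840$) individually, computing specific anti-plurigenera such as $P_{-52}$, $P_{-57}$, $P_{-61}$ and sharpening $\mu_0$ via Remark \ref{upper mu0}. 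Without this quantitative analysis your argument establishes only that $\varphi_{-m}$ is birational for all sufficiently large $m$, which is already known, not the effective bound $m\geq 97$.
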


\begin{remark}
We remark that Theorems \ref{main2} and \ref{birationality2} are true even if $X$ has canonical singularities instead of $\bQ$-factorial terminal singularities, which is not difficult to see. 
\end{remark}

This paper is organized as follows. In Section \ref{preliminaries}, we recall some basic knowledge.  In Section \ref{section non-pencil}, we consider Problem \ref{problem2} on $\bQ$-Fano $3$-folds. We generalize a result of Alexeev and reduce the problem to the numerical behavior of anti-plurigenera, then we apply a method developed by J. A. Chen and the first author to analyze the possible weighted baskets.  Section \ref{section non-pencil weak} is devoted to proving Theorem \ref{main2} for weak $\bQ$-Fano $3$-folds. We reduce the problem to the numerical behavior of Hilbert functions and use Reid's formula to estimate the lower bound of Hilbert functions. {}Finally we study the birationality in  Section \ref{section birationality}.  We give an effective criterion for the birationality of $\varphi_{-m}$. As applications, we prove Theorems \ref{birationality1} and \ref{birationality2} in the last part.
\smallskip

{\noindent
{\bf Acknowledgments.}
The first author appreciates the very effective discussion with Jungkai Chen, Yongnam Lee, Yuri Prokhorov,  De-Qi Zhang and Qi Zhang during the preparation of this paper.  The second author would like to express his gratitude to his supervisor Professor Yujiro Kawamata for  suggestions and encouragement. Part of this paper was written during the second author's visit to Fudan University and he would like to thank for the hospitality and support. The first author was supported by National Natural Science Foundation of China (\#11171068, \#11231003, \#11421061). The second author was supported by Grant-in-Aid for JSPS Fellows (KAKENHI No. 25-6549) and Program for Leading Graduate  Schools, MEXT, Japan.
}
%n Section \ref{key thms}, we prove key theorems by generalizing Alexeev's result in \cite{A}, and reduce the problem to the numerical behavior of the anti-plurigenera. In Section \ref{cc method}, we introduce a method by J. A. Chen and the first author in \cite{CC} to recover the basket of singularities from the numerical behavior of the anti-plurigenera.  By this method, in the rest sections, we can prove that anti-plurigenera with ``bad" numerical behavior on which key theorems fails  are very few or does not exist. See more details of the strategy in Section \ref{key thms}.

\section{\bf Preliminaries}\label{preliminaries}

Let $X$ be a weak $\bQ$-Fano 3-fold. Denote by $r_X$ the Gorenstein index of $X$, i.e. the Cartier index of $K_X$. 
For any positive integer $m$, the number
$P_{-m}(X):=h^0(X,\OO_X(-mK_X))$ is called the {\it $m$-th anti-plurigenus} of
$X$. Clearly, since $-K_X$ is nef and big, Kawamata--Viehweg
vanishing theorem \cite[Theorem 1-2-5]{KMM} implies
$$h^i(-mK_X)=h^i(X, K_X-(m+1)K_X)=0$$
for all $i>0$ and $m\geq 0$.

For two linear systems $|A|$ and $|B|$, we write $|A|\preceq |B|$ if there exists an effective divisor $F$ such that $$|B|\supset |A|+F.$$
 In particular, if $A\leq B$ as divisors, then $|A|\preceq |B|$.

\subsection{Rational map defined by a Weil divisor}\label{b setting}\

Consider an effective $\bQ$-Cartier Weil divisor $D$ on $X$ with $h^0(X, D)\geq 2$. We study the rational map defined by $|D|$, say 
$$X\overset{\Phi_D}{\dashrightarrow} \bP^{h^0(D)-1}$$ which is
not necessarily well-defined everywhere. By Hironaka's big
theorem, we can take successive blow-ups $\pi: Y\rightarrow X$ such
that:
\begin{itemize}
\item [(i)] $Y$ is nonsingular projective;
\item [(ii)] the movable part $|M|$ of the linear system
$|\rounddown{\pi^*(D)}|$ is base point free and, consequently,
the rational map $\gamma:=\Phi_D\circ \pi$ is a morphism;
\item [(iii)] the support of the
union of $\pi_*^{-1}(D)$ and the exceptional divisors of $\pi$ is of
simple normal crossings.
\end{itemize}
Let $Y\overset{f}\longrightarrow \Gamma\overset{s}\longrightarrow Z$
be the Stein factorization of $\gamma$ with $Z:=\gamma(Y)\subset
\bP^{h^0(D)-1}$. We have the following commutative
diagram.\medskip

\begin{picture}(50,80) \put(100,0){$X$} \put(100,60){$Y$}
\put(170,0){$Z$} \put(170,60){$\Gamma$}
\put(112,65){\vector(1,0){53}} \put(106,55){\vector(0,-1){41}}
\put(175,55){\vector(0,-1){43}} \put(114,58){\vector(1,-1){49}}
\multiput(112,2.6)(5,0){11}{-} \put(162,5){\vector(1,0){4}}
\put(133,70){$f$} \put(180,30){$s$} \put(95,30){$\pi$}
\put(130,10){$\Phi_D$}\put(136,40){$\gamma$}
\end{picture}
\bigskip

{\bf Case $(f_{\rm{np}})$.} If $\dim(\Gamma)\geq 2$, a general
member $S$ of $|M|$ is a nonsingular projective surface by
Bertini's theorem. We say that $|D|$ {\it is not composed with
a pencil of surfaces}.

{\bf Case $(f_{\rm p})$.} If $\dim(\Gamma)=1$, i.e. $\dim\overline{\Phi_D(X)}=1$, then $\Gamma\cong
\bP^1$ since $g(\Gamma)\leq q(Y)=q(X):=h^1(\OO_X)=0$. Furthermore, a
general fiber $S$ of $f$ is an irreducible nonsingular projective surface
by Bertini's theorem. We may write
$$M=\sum_{i=1}^n S_i\sim
nS$$ where $S_i$ is a nonsingular fiber of $f$ for all $i$ and
$n=h^0(D)-1$.  We can write
$$
|D|=|nS'|+E,
$$
where $|S'|=|\pi_*S|$ is an irreducible rational pencil, $|nS'|$ is the movable part and $E$ is the fixed part.
In this case, 
$|D|$ is said to {\it be composed with a rational pencil of surfaces}.  We collect a couple of basic facts about rational pencils as follows.

\begin{lem} Keep the same notation as above. 
If $|D|=|nS'|+E$ is composed with a rational pencil of surfaces, then $n=h^0(D)-1$.
\end{lem}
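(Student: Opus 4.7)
The plan is to evaluate $h^0(Y, \mathcal{O}_Y(M))$ in two independent ways and match the answers.

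First I would unfold the structure of $M$ coming from the diagram. Each $S_i$ is a general smooth fiber of $f\colon Y \to \Gamma = \mathbb{P}^1$, so $S_i \sim f^{*}(p_i)$ for some point $p_i \in \mathbb{P}^1$, and summing gives
$$\mathcal{O}_Y(M) \cong f^{*}\mathcal{O}_{\mathbb{P}^1}(n).$$
Since $f$ arises from a Stein factorization, its fibers are connected, so $f_{*}\mathcal{O}_Y = \mathcal{O}_{\mathbb{P}^1}$. Applying the projection formula then yields
$$H^0\bigl(Y, \mathcal{O}_Y(M)\bigr) \cong H^0\bigl(\mathbb{P}^1, \mathcal{O}_{\mathbb{P}^1}(n) \otimes f_{*}\mathcal{O}_Y\bigr) \cong H^0\bigl(\mathbb{P}^1, \mathcal{O}_{\mathbb{P}^1}(n)\bigr),$$
which has dimension $n+1$.

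Next I would identify $h^0\bigl(Y, \mathcal{O}_Y(M)\bigr)$ with $h^0(D)$. Writing $|\lfloor\pi^{*}(D)\rfloor| = |M| + F$ with $F$ the fixed part, the bijection $D' \mapsto D' - F$ shows that $|M|$ is itself a complete linear system (it equals $|\lfloor\pi^{*}(D)\rfloor - F|$) of the same dimension as $|\lfloor\pi^{*}(D)\rfloor|$. Combining this with the standard equality $h^0\bigl(Y, \mathcal{O}_Y(\lfloor\pi^{*}(D)\rfloor)\bigr) = h^0(X, \mathcal{O}_X(D))$ for a Weil divisor on the normal variety $X$ pulled back via the resolution $\pi$ (which follows from $\pi_{*}\mathcal{O}_Y = \mathcal{O}_X$ together with $\pi_{*}\lfloor\pi^{*}(D)\rfloor = D$), one obtains
$$h^0\bigl(Y, \mathcal{O}_Y(M)\bigr) = h^0(D).$$

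Equating the two evaluations yields $n+1 = h^0(D)$, which is exactly the claim. The argument is routine; the one place to take slight care is verifying that the movable part of $|\lfloor\pi^{*}(D)\rfloor|$ genuinely coincides with a complete linear system on $Y$, so that the projection-formula count of sections can be legitimately read back as $h^0(D)$.
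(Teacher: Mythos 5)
Your proof is correct. The paper states this lemma without proof (it is listed among the ``basic facts about rational pencils''), so there is nothing to compare against; your argument --- writing $\mathcal{O}_Y(M)\cong f^*\mathcal{O}_{\mathbb{P}^1}(n)$, using $f_*\mathcal{O}_Y=\mathcal{O}_{\mathbb{P}^1}$ from the Stein factorization and the projection formula to get $h^0(M)=n+1$, and identifying $h^0(M)=h^0(\lfloor\pi^*(D)\rfloor)=h^0(X,D)$ via the completeness of the movable part and $\pi_*\mathcal{O}_Y(\lfloor\pi^*(D)\rfloor)=\mathcal{O}_X(D)$ --- is precisely the standard argument the authors leave implicit, and it involves no circular use of the setup since you never invoke the asserted value of $n$.
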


\begin{lem}\label{pencils}
If $|D_1|=|k_1S_1|+E_1$ and $|D_2|=|k_2S_2|+E_2$ are composed with rational pencils of surfaces and $D_1\leq D_2$, then $|S_1|=|S_2|$.
\end{lem}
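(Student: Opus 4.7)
The plan is to exploit the natural inclusion of linear systems $|D_1| \preceq |D_2|$ coming from the effective divisor $D_2 - D_1$, and then compare fixed and movable parts. Set $L := |D_1| + (D_2 - D_1) \subseteq |D_2|$. Using $|D_1| = |k_1 S_1| + E_1$ and the fact that $|k_1 S_1|$ is fixed-part-free, the fixed part of $L$ equals $E_1 + (D_2 - D_1)$ and its movable part is $|k_1 S_1|$. Because $L \subseteq |D_2|$, every member of $L$ contains the fixed part $E_2$ of $|D_2|$, so $\text{Fix}(L) \geq E_2$; equivalently, $H := E_1 + D_2 - D_1 - E_2$ is effective.

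Next, any element of $L$ can be written in two ways: once as $k_1 S_1' + E_1 + (D_2 - D_1)$ for some $S_1' \in |S_1|$, and once (as a member of $|D_2|$) as $k_2 S_2' + E_2$ for some $S_2' \in |S_2|$. Equating the two expressions yields
\[
k_2 S_2' = k_1 S_1' + H
\]
with $H \geq 0$ independent of the choice of $S_1'$. In particular $k_1 S_1' \leq k_2 S_2'$ as effective divisors on $X$. From the construction recalled in Case $(f_{\rm p})$, a general member of either $|S_i|$ is the pushforward of a general smooth fiber of the associated fibration, hence an irreducible reduced prime divisor on $X$. Writing such general members as $S_1' = \Sigma_1$ and $S_2' = \Sigma_2$, the inequality $k_1 \Sigma_1 \leq k_2 \Sigma_2$ of effective divisors, combined with $k_1 \geq 1$, forces $\Sigma_1 = \Sigma_2$.

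Thus every general member of $|S_1|$ coincides, as a divisor on $X$, with a member of $|S_2|$. Since $|S_1|$ and $|S_2|$ are both one-dimensional linear systems and they share infinitely many members, they must agree as linear systems, i.e., $|S_1| = |S_2|$. The only delicate point in this plan is establishing $H \geq 0$, which reduces to the general principle that the fixed part of a sublinear system dominates the fixed part of the ambient system; once this is in place, the remainder is a direct comparison of prime supports of general pencil members.
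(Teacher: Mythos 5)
Your overall strategy (compare fixed and movable parts under $D_1\leq D_2$, then use primality of the generic pencil member) is sound, and the first half of your argument is fine: $H:=E_1+(D_2-D_1)-E_2\geq 0$ does follow because every member of $L$ lies in $|D_2|$ and hence contains $E_2$. The genuine gap is the sentence asserting that an element of $L$, viewed as a member of $|D_2|$, can be written as $k_2S_2'+E_2$ for a \emph{single} $S_2'\in |S_2|$. When $|D_2|$ is composed with a pencil, the members of its movable part are sums of $k_2$ members of the pencil, and the general such member consists of $k_2$ \emph{distinct} pencil members (they are pullbacks of degree-$k_2$ divisors on $\mathbb{P}^1$ under the induced fibration); only very special members have the form $k_2S_2'$. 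So the identity $k_2S_2'=k_1S_1'+H$ is unjustified, and so is the subsequent claim that this $S_2'$ may be taken general, hence prime: even after correcting the decomposition to $\Sigma_2^{(1)}+\cdots+\Sigma_2^{(k_2)}=k_1\Sigma_1+H$ with $\Sigma_2^{(j)}\in|S_2|$, the pencil members appearing there are dictated by the equation, not chosen by you, and could a priori be special (reducible or non-reduced) members.

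The argument is repairable along your lines: for generic $\Sigma_1$ the prime divisor $\Sigma_1$ is not a component of the fixed divisor $H$, so it is a component of some member of $|S_2|$; since the pencil $|S_2|$ is irreducible, only finitely many of its members fail to be prime, hence infinitely many of the $\Sigma_1$ coincide with members of $|S_2|$, and already one common member gives $S_1\sim S_2$, whence $|S_1|=|S_2|$ because both are complete pencils ($h^0=2$). This extra bookkeeping is exactly what the paper's shorter proof bypasses by never touching individual members: from $D_1\leq D_2$ it passes to $\mathrm{Mov}|D_1|\preceq \mathrm{Mov}|D_2|$, i.e. $|S_1|\preceq |k_2S_2|$, then uses irreducibility of the generic element of $|S_1|$ to get $|S_1|\preceq |S_2|$, and concludes from $h^0(S_1)=h^0(S_2)=2$ together with movability.
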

\begin{proof}
Since $D_1\leq D_2$, we have ${\rm Mov}|D_1|\preceq {\rm Mov}|D_2|$. Hence $|S_1|\preceq |k_2S_2|$. Thus $|S_1|\preceq |S_2|$ by the irreducibility of $|S_1|$. Then by $h^0(S_1)=h^0(S_2)=2$ and $|S_1|,|S_2|$ are movable, we have $|S_1|= |S_2|$. 
\end{proof}

For another Weil  $\bQ$-Cartier divisor $D'$ satisfying $h^0(X,D')>1$,
we say that $|D|$ and $|D'|$ are {\it composed with the same pencil} if $|D|$ and $|D'|$ are composed with pencils and they define the same fibration structure $Y\rightarrow \bP^1$ on some  model $Y$. In particular, $|D|$ and $|D'|$ are {not composed with the same pencil} if one of them is not composed with  a pencil.

Define
$$\iota=\iota(D):=\begin{cases} 1, & \text{Case\ } (f_{\text{np}});\\
n, & \text{Case\ } (f_{\text{p}}).
\end{cases}$$
Clearly, in both cases, $M\equiv \iota S$ with $\iota\geq 1$.

\begin{definition} For both Case $(f_{\text{np}})$ and Case $(f_{\text{p}})$, we call $S$
{\it a generic irreducible element of $|M|$}.
\end{definition}

%Note that we can also define that a base point free linear system on a surface is composed with a pencil of curves (which might be irrational), and define a generic irreducible element of a base point free linear system on a surface.
We may also define ``a generic irreducible element'' of a moving linear system on a surface in the similar way. 

Restricting our interest to special cases, we fix an
effective Weil divisor $D\sim -m_0K_X$ at the very beginning assuming that $P_{-m_0}\geq 2$ for some integer $m_0>0$. We would like to study the geometry of $X$ induced by $\Phi_D$.

\subsection{Reid's formula}\

A {\it basket} $B$  is a collection of pairs of integers (permitting
weights), say $\{(b_i,r_i)\mid i=1, \cdots, s; b_i\ \text{is coprime
 to}\ r_i\}$.  For simplicity, we will alternatively write a basket as follows, 
 say
$$B=\{(1,2), (1,2), (2,5)\}=\{2\times (1,2), (2,5)\}.$$

Let $X$ be a weak $\bQ$-Fano 3-fold. According to Reid
\cite{YPG},  for a Weil divisor $D$ on $X$, 
$$
\chi(D)=1+\frac{1}{12}D(D-K_X)(2D-K_X)+\frac{1}{12}(D\cdot c_2)+\sum_Qc_Q(D),
$$
where the last sum runs over Reid's basket of orbifold points. If the orbifold point $Q$ is of type $\frac{1}{r}(1,-1,b)$ and $i=i_D$ is the local index of divisor $D$ at $Q$ (i.e. $D\sim iK_X$ around $Q$, $0\leq i< r$),  then
$$
c_Q(D)=-\frac{i(r^2-1)}{12r}+\sum_{j=0}^{i-1}\frac{\overline{jb}(r-\overline{jb})}{2r}.
$$
Here the symbol $\overline{\cdot}$ means the smallest residue mod $r$ and $\sum_{j=0}^{-1}:=0$.   
%Every noncyclic point corresponds to a basket of cyclic points.
%Hence there is a basket of pairs
%$$B_X=\{(b_i,r_i)|i=1,\cdots, t; 0<b_i\leq \frac{r_i}{2};b_i \text{ is coprime to } r_i\}$$ corresponding to all the cyclic points.  
Write 
\begin{align*}
\chi_{\text{sing}}(D):={}&\sum_Qc_Q(D)\ \text{and}\\
\chi_{\text{reg}}(D):={}&1+\frac{1}{12}D(D-K_X)(2D-K_X)+\frac{1}{12}(D\cdot c_2). 
\end{align*}

We make some remarks here on how to compute the term $c_Q(D)$:
\begin{itemize}
\item[(1)]  If $D=nK_X$ for $n\in {\mathbb Z}$, we take $i=\overline{n}$ (modulo $r$) and then 
$$c_Q(nK_X)=c_Q(iK_X)=-\frac{i(r^2-1)}{12r}+\sum_{j=0}^{i-1}\frac{\overline{jb}(r-\overline{jb})}{2r}.$$

\item[(2)] If $D=tK_X$ for $t\in {\mathbb Z}^+$, then it is easy to see
$$c_Q(tK_X)=-\frac{t(r^2-1)}{12r}+\sum_{j=0}^{t-1}\frac{\overline{jb}(r-\overline{jb})}{2r}.$$

\item[(3)] By Reid's formula,  Kawamata--Viehweg vanishing theorem, and Serre duality,  we have, for any $n>0$, 
\begin{align*}
P_{-n}(X)={}&-\chi(\OO_X((n+1)K_X))\\
={}&\frac{1}{12}n(n+1)(2n+1)(-K_X^3)+(2n+1)-l(-n)
\end{align*}
where
$l(-n)=l(n+1)=\sum_i\sum_{j=1}^n\frac{\overline{jb_i}(r_i-\overline{jb_i})}{2r_i}$ and the sum runs over Reid's basket of orbifold points
$$B_X=\{(b_i,r_i)\mid i=1,\cdots, s; 0<b_i\leq \frac{r_i}{2};b_i \text{ is coprime to } r_i\}.$$
\end{itemize}

The above formula can be rewritten as:
\begin{align*}
{P}_{-1}&{} =
\frac{1}{2}\Big(-K_X^3+\sum_i \frac{b_i^2}{r_i}\Big)-\frac{1}{2}\sum_i b_i+3,\\
{P}_{-m}-{P}_{-(m-1)}&{}= \frac{m^2}{2}\Big(-K_X^3+\sum_i
\frac{b_i^2}{r_i}\Big)-\frac{m}{2}\sum_i b_i+2-\Delta^{m}
\end{align*}
where $\Delta^{m}= \sum_i
\big(\frac{\overline{b_im}(r_i-\overline{b_im})}{2r_i}-
\frac{b_im(r_i-b_im)}{2r_i}\big)$ for any $m\geq 2$.

\subsection{Upper bound of Gorenstein indices}\

The following fact might be known to experts.  We will apply it in our argument. 

\begin{prop}\label{bound index}
Let $X$ be a weak $\bQ$-Fano 3-fold. Then either $r_X= 840$ or $r_X\leq 660$. 
\footnote{This means that the Gorenstein index of a weak $\bQ$-Fano $3$-fold is bounded from above by $840$. Among known $\bQ$-Fano $3$-folds, the maximal Gorenstein index is 420. For example, so is the general weighted hypersurface $X_{19}\subset \mathbb{P}(1,3,4,5,7)$ (cf. \cite[List 16.6, No.40]{Fletcher}). We do not know if this bound is optimal. }\end{prop}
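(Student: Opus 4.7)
The plan is to reduce the bound on $r_X$ to a finite numerical question about Reid's basket of orbifold points $B_X=\{(b_i,r_i)\}$ of $X$, since by definition $r_X=\lcm_i(r_i)$. The question then splits naturally into (a) controlling which individual indices $r$ can appear at a terminal singularity on a weak $\bQ$-Fano 3-fold, and (b) bounding which combinations $\{r_i\}$ can coexist subject to the global numerical constraints imposed by $X$.

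For step (a), I would invoke the Mori--Reid classification of terminal 3-fold singularities (so that each local invariant $(b_i,r_i)$ in the basket is a well-defined deformation invariant, and an index-$r>1$ singularity deforms to cyclic quotients of type $\frac{1}{r}(1,-1,b)$), together with the boundedness of weak $\bQ$-Fano 3-folds \cite{K,KMMT} and Kawamata--Viehweg vanishing, which gives $P_{-m}(X)=\chi(\OO_X(-mK_X))$. Combined with $-K_X^3>0$, this yields an effective a priori upper bound on each individual $r_i$ and on $\#B_X$. For step (b), I would run the weighted-basket methodology of J.\ A.\ Chen and the first author, already announced in Section \ref{section non-pencil}. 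The non-negativity and integrality of $P_{-1},P_{-2},\ldots$, together with the identities for $P_{-m}-P_{-(m-1)}$ and $\Delta^m$ recalled above, cut out a finite list of admissible weighted baskets inside the range from (a); on each such basket one simply reads off $\lcm_i(r_i)$ and checks directly that the value is either $\leq 660$ or exactly $840$.

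The main obstacle will be step (b). The naive search space allowed by the a priori bound from (a) is far too large to inspect by hand, so to isolate the precise gap between $660$ and $840$ one must exploit the inequalities $P_{-1}\geq 0$ and $P_{-2}\geq 0$ and the integrality of $\chi_{\mathrm{sing}}$ aggressively to prune the enumeration. Once the pruning succeeds, the identification of the extremal baskets realizing the maximum $\lcm_i(r_i)=840$ (note $840=\lcm(1,2,\ldots,8)$) becomes a short finite verification, and all remaining admissible baskets are then seen to have lcm at most $660$, which is exactly what the proposition asserts.
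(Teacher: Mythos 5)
There is a genuine gap, and it sits exactly where your proposal is vaguest. The effective input that makes this proposition work is never identified: the paper's proof rests entirely on the inequality $\sum_i\bigl(r_i-\frac{1}{r_i}\bigr)\leq 24$, which follows from $(-K_X\cdot c_2(X))\geq 0$ (Koll\'ar--Miyaoka--Mori--Takagi) fed into Reid's formula \cite[10.3]{YPG}. Your step (a) instead appeals to the boundedness of weak $\bQ$-Fano 3-folds together with Kawamata--Viehweg vanishing and $-K_X^3>0$, and claims this ``yields an effective a priori upper bound on each individual $r_i$ and on $\#B_X$''. It does not: boundedness of the family is not an effective statement from which one can extract the numbers $660$ or $840$, and the anti-plurigenera constraints $P_{-m}\geq 0$ involve $-K_X^3$ as a free positive parameter, so by taking $-K_X^3$ large one satisfies them with baskets of arbitrarily high index; nothing you cite caps $r_i$ at $24$ the way the $c_2$-inequality does. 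Without that inequality the finite search space you promise in step (a) simply does not exist.

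Step (b) is also not a proof but a declaration that a proof should exist. The actual content of the proposition is the combinatorial dichotomy: among multisets $(r_i)$ with $\sum_i(r_i-\frac{1}{r_i})\leq 24$, the lcm is either at most $660$ or exactly $840$. The paper handles this not by enumerating weighted baskets via $\Delta^m$ and integrality of $\chi$ (none of which is used here), but by a specific reduction: replace each $r_i$ by its maximal prime-power factors, using the superadditivity $ab-\frac{1}{ab}\geq a-\frac{1}{a}+b-\frac{1}{b}+2$ for coprime $a,b>1$ to preserve the budget of $24$ and the lcm, so that each entry lies in $\{2,3,4,5,7,8,9,11,13,16,17,19\}$; then a short case analysis on the largest prime power rules out every value in $(660,840)$ and pins down the extremal configurations $(3,5,7,8)$ and $(2,3,5,7,8)$. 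Your proposal acknowledges that the naive enumeration is ``far too large to inspect by hand'' and defers the pruning to unspecified ``aggressive'' use of $P_{-1},P_{-2}\geq 0$ --- but those constraints do not see the lcm and, as noted above, are too weak with $-K_X^3$ unbounded; so the step on which the whole statement hinges is missing. To repair the argument you would need to (i) invoke $(-K_X\cdot c_2)\geq 0$ to get $\sum_i(r_i-\frac{1}{r_i})\leq 24$, and (ii) supply a concrete combinatorial scheme, such as the prime-power reduction above, that turns this into the $660$/$840$ dichotomy.
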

\begin{proof} Write Reid's basket 
$$B_X=\{(b_i,r_i)\mid i=1,\cdots, s; 0<b_i\leq \frac{r_i}{2};b_i \text{ is coprime to } r_i\}.$$ Then, by definition,  $r_X=\text{l.c.m.}\{r_i\mid i=1,\cdots, s\}$.

By \cite{KMMT}, we know that
$(-K_X\cdot c_2(X))\geq 0$. Therefore  Reid \cite[10.3]{YPG} gives the inequality
\begin{align}\label{kwmt2}
\sum_i\big(r_i-\frac{1}{r_i}\big)\leq 24.
\end{align}

Now for the sequence $\mathcal{R}=(r_i)_i$, we define a new set $\mathcal{P}=\{s_j\}_j$ as following: if we factor $r_i$ into its prime factors such that $r_i=p_1^{a_{1i}}p_2^{a_{2i}}\cdots p_k^{a_{ki}}$, then we take $\mathcal{P}=\{p_j^{a_{ji}}\}_{1\leq j\leq k, i}$. 
It is easy to show that if $a,b>1$ and coprime, then 
\begin{align}\label{b ab}
ab-\frac{1}{ab}\geq a-\frac{1}{a}+b-\frac{1}{b}+2. 
\end{align}
So 
\begin{align}\label{b sr}
\sum_j\big(s_j-\frac{1}{s_j}\big)\leq\sum_i\big(r_i-\frac{1}{r_i}\big)\leq 24.
\end{align}
and we also have $\text{l.c.m.}(s_j)_j=\text{l.c.m.}(r_i)_i=r_X$. 
So the problem is reduced to treat the sequence $(s_j)_j$ instead. 
%Note that, if $s=1$, then $r_X\leq 24$. Thus we only care about the case $s>1$. 
Clearly, for each $j$, 
$$s_j\in \{2,3,4,5,7,8,9,11,13,16,17,19\}.$$
Now we may assume that $r_X>660$. 

Denote by $s_1$ the largest value in $\mathcal{P}$, by $s_2$ the second largest value, by $s_3$, $s_4$ the third, the forth, and so on. For instance, if $\mathcal{P}=\{2,3,4,5\}$, then $s_1=5$, $s_2=4$, $s_3=3$, and $s_4=2$. If the value $s_j$ does not exist by definition, then we set $s_j=1$. In the previous example, we have $s_5=1$. 

Since $\text{l.c.m.}(2,3,4,5,7)=420$ and $\text{l.c.m.}(2,3,4,5,7,8)=840$, if $s_1\leq 8$, then $3,5,7,8\in \mathcal{P}$. In this case $\mathcal{P}=\{3,5,7,8\}$ or $\{2,3,5,7,8\}$ by inequality (\ref{b sr}) and $\mathcal{R}=(3,5,7,8)$ or $(2,3,5,7,8)$ by inequality (\ref{b ab}). In a word, $r_X=840$. 

If $s_1\geq 16$, then
$$
\sum_{j>1}\big(s_j-\frac{1}{s_j}\big)\leq 8+\frac{1}{16}.
$$ 
Then $s_2\leq 8$. Also $s_2\geq 5$ since, otherwise, $\lcm(2,3,4,s_1)\leq 228<r_X$ (a contradiction).  Hence
$$
\sum_{j>2}\big(s_j-\frac{1}{s_j}\big)\leq 3+\frac{1}{16}+\frac{1}{5}.
$$ 
So $s_3\leq 3$, but $2$ and $3$ can not be in $\mathcal{P}$ simultaneously. Then $\lcm (s_j)_j\leq  3\times 8 \times 19 <r_X$, a contradiction. 

If $s_1=13$, then $s_2\geq 5$ since, otherwise, $\lcm(2,3,4,s_1)=12s_1<r_X$ (a contradiction). 
Then
$$
\sum_{j>2}\big(s_j-\frac{1}{s_j}\big)\leq 11-s_2+\frac{1}{13}+\frac{1}{s_2}.
$$ 
If $s_2=11$, then $s_j=1$  for any $j>2$ and $r_X=11\times 13$, a contradiction. 
If $s_2=9$, then $s_3\leq 2$ and $\lcm(s_j)_j\leq 2\times 9\times 13<r_X$, a contradiction.
If $s_2=8$, then $s_3\leq 3$, but $2$ and $3$ can not be in $\mathcal{P}$ simultaneously. So $\lcm(s_j)_j\leq 3\times 8\times 13<r_X$, a contradiction. 
If $s_2=7$, then $s_3\leq 4$, but $3$ and $4$ can not be in $\mathcal{P}$ simultaneously. So $\lcm(s_j)_j\leq 6\times 7\times 13<r_X$, a contradiction. 
If $s_2=5$, then $3$ and $4$ can not be in $\mathcal{P}$ simultaneously. So $\lcm(s_j)_j\leq 6\times 5\times 13<r_X$, a contradiction.

If $s_1=11$, then $9\geq s_2\geq 7$ since, otherwise, $\lcm(2,3,4,5,s_1)=60s_1<r_X$ (a contradiction). Then $$
\sum_{j>2}\big(s_j-\frac{1}{s_j}\big)\leq 6+\frac{1}{11}+\frac{1}{7}.
$$ 
Hence $s_3\leq 5$. 
If $s_3=5$, then $s_j=1$ for any $j>3$ and $\lcm(s_j)_j\leq 5\times 9\times 11<r_X$, a contradiction.
If $s_3=4$, then $s_4\leq 2$ and $\lcm(s_j)_j\leq 4\times 9\times 11<r_X$, a contradiction. 
If $s_3\leq 3$, then $\lcm(s_j)_j\leq 2\times 3\times 9\times 11<r_X$, a contradiction.

If $s_1=9$, then $8\geq s_2\geq 7$ since, otherwise, $\lcm(2,3,4,5,9)=180<r_X$ (a contradiction). Consider firstly the case $s_2=8$.  We have $$
\sum_{j>2}\big(s_j-\frac{1}{s_j}\big)\leq 7+\frac{1}{9}+\frac{1}{8}.
$$ 
If $s_3=7$, then $s_j=1$ for any $j>3$ and $\lcm(s_j)_j\leq 7\times 8\times 9<r_X$, a contradiction. 
If $s_3\leq 5$, then $\lcm(s_j)_j\leq {\rm lcm}(2,3,4,5,8,9)=360<r_X$, a contradiction.
Next we consider the case $s_2=7$. Then $$
\sum_{j>2}\big(s_j-\frac{1}{s_j}\big)\leq 8+\frac{1}{9}+\frac{1}{7}.
$$ 
If $s_3=5$, then $s_4\leq 3$ and $\lcm(s_j)_j\leq 2\times 5\times 7\times 9<r_X$, a contradiction. 
If $s_3\leq 4$, then $\lcm(s_j)_j\leq 4\times 7 \times 9<r_X$, a contradiction. So we conclude the statement. 

From the proof we also know that $r_X=840$ only happens when $\mathcal{R}=(3,5,7,8)$ or $(2,3,5,7,8)$. 
\end{proof}
%\begin{remark}
%The upper bound 
%\end{remark}

\section{\bf When is $|-mK_X|$ not composed with a pencil? (Part I)}\label{section non-pencil}

The most important part of this paper is to find a minimal positive integer $m$ so that $|-mK_X|$ is not composed with a pencil of surfaces.  For the convenience of expression, we fix the notation first. 

\begin{definition} Let $X$ be a weak $\bQ$-Fano 3-fold. For any $0\leq i\leq 2$, define
$$\delta_i(X):=\text{min}\{m\in \ZZ^+\mid \dim\overline{\varphi_{-m}(X)}>i\}.$$
\end{definition}

We will mainly treat $\bQ$-Fano 3-folds in this section.

\subsection{Two key theorems}\label{key thms}\

 We prove two theorems here which 
are crucial in proving Theorem \ref{main1}. 

\begin{thm}\label{k1}
Let $X$ be a $\mathbb{Q}$-Fano $3$-fold with the basket $B$ of singularities. Fix a positive integer $m$ such that $P_{-m}>0$.  Assume that, for each pair  $(b,r)\in B$, one of the following conditions is satisfied: 
\begin{itemize}
\item[(1)] $m\equiv 0, \pm 1\mod r$; 

\item[(2)] $m\equiv -2 \mod r$ and $b=\lfloor \frac{r}{2} \rfloor$;

\item[(3)] $m\equiv 2 \mod r$ and $3b\geq r$;

\item[(4)] $m\equiv 3 \mod r$ and $4b\geq r$;

\item[(5)] $m\equiv 4\mod r$, $\overline{b}(r-\overline{b})\geq \overline{4b}(r-\overline{4b})$, and 
$$\overline{b}(r-\overline{b})+\overline{2b}(r-\overline{2b})\geq \overline{3b}(r-\overline{3b})+\overline{4b}(r-\overline{4b}).$$
\end{itemize}
Then one of the following holds:
\begin{itemize}
\item[(I)] $P_{-m}=1$ and $-mK_X\sim E$ is an effective prime divisor;

\item[(II)] $P_{-m}=2$,  $|-mK_X|$ does not have fixed part, and is composed with an irreducible rational pencil of surfaces;

\item[(III)]  $P_{-m}\geq 3$,  $|-mK_X|$ does not have fixed part, and is not composed with a pencil of surfaces.
\end{itemize}
\end{thm}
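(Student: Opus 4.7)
The theorem gives a trichotomy on $P_{-m}$, and the heart of the argument is to exclude a nontrivial fixed part of $|-mK_X|$ when $P_{-m}\geq 2$ (and also to exclude a pencil structure when $P_{-m}\geq 3$). The plan is to argue by contradiction. Assume $|-mK_X|$ has the decomposition $-mK_X\sim M+F$ as Weil divisors with $|M|$ the movable part and $F>0$ fixed. Since $\rho(X)=1$ and $-K_X$ is $\bQ$-ample, both $M$ and $F$ are numerically proportional to $-K_X$, so $M\equiv\mu(-K_X)$ and $F\equiv\nu(-K_X)$ for positive rationals with $\mu+\nu=m$.

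The first step is to convert this into a comparison of anti-plurigenera. Using $\rho(X)=1$ and the structure of the class group (so that numerical proportionality lifts to linear equivalence modulo torsion), I would produce a positive integer $k\leq m-1$ with
\[P_{-m}=h^0(M)\leq h^0(-kK_X)=P_{-k}.\]
Hence the existence of a fixed part forces $P_{-m}\leq P_{-k}$ for some $k<m$, and the task becomes producing a contradiction with Reid's formula.

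The main computational step is to show, under conditions (1)--(5), that $P_{-m}>P_{-k}$ for every positive integer $k<m$. This is done by iterating the explicit difference
\[P_{-m}-P_{-(m-1)}=\tfrac{m^2}{2}\bigl(-K_X^3+\textstyle\sum_i b_i^2/r_i\bigr)-\tfrac{m}{2}\textstyle\sum_i b_i+2-\Delta^m,\]
in which the leading quadratic term is positive because $-K_X^3>0$. One must bound the singular correction $\Delta^m$, and conditions (1)--(5) are precisely the arithmetic inequalities on $\overline{jb_i}(r_i-\overline{jb_i})$ that make $\Delta^m$ controllable: (1) kills the local contribution outright; (2)--(4) handle the small residues $m\equiv -2,2,3\bmod r$ under the stated constraints on $b$; and (5) is a two-step convexity-type bound on the four-term sequence $\overline{jb}(r-\overline{jb})$ for $j=1,2,3,4$. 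The main obstacle is verifying (5) in full generality together with making the anti-plurigenus comparison water-tight through any class group torsion.

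Once no-fixed-part is secured, the trichotomy follows. For $P_{-m}=2$, the image $\overline{\varphi_{-m}(X)}$ is one-dimensional and is isomorphic to $\bP^1$ because $q(X)=0$, giving case (II). For $P_{-m}\geq 3$, if $|-mK_X|$ were composed with a pencil, the absence of a fixed part would force $|-mK_X|=|nS|$ with $n=P_{-m}-1\geq 2$ and $|S|$ an irreducible pencil; expressing $S$ numerically as a fraction of $-K_X$ and applying the same monotonicity argument would again give a contradiction. Case (I) with $P_{-m}=1$ follows from the same mechanism applied to decompositions $-mK_X\sim D_1+D_2$ into two nonzero effective summands: such a decomposition would produce $P_{-m}\geq 2$ by the anti-plurigenus comparison above, contradicting the hypothesis.
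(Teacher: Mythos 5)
Your proposal hinges on a reduction that is not justified: from a decomposition $-mK_X\sim M+F$ with $F>0$ you pass, via $\rho(X)=1$ and numerical proportionality $M\equiv\mu(-K_X)$, to an inequality $P_{-m}=h^0(M)\leq h^0(-kK_X)=P_{-k}$ for some integer $k\leq m-1$. Numerical proportionality with a \emph{rational} coefficient $\mu$ does not make $M$ linearly equivalent (even up to torsion) to an integral multiple of $K_X$; the class group of a $\bQ$-Fano $3$-fold is not generated by $K_X$ in general. Moreover, to compare $h^0(M)$ with $h^0(-kK_X)$ you would need $-kK_X-M$ to be \emph{effective}, and ampleness is of no help here: on these Fanos even $-K_X$ itself may satisfy $P_{-1}=0$, so ``numerically positive'' divisors need not move. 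The same gap reappears in your treatment of cases (I) and (III). Your second step is also not what conditions (1)--(5) deliver: they are congruence/arithmetic conditions on $m$ alone and cannot control the corrections $\Delta^{j}$ for all intermediate $j<m$, so they do not force $P_{-m}>P_{-k}$ for every $k<m$; indeed anti-plurigenera of $\bQ$-Fano $3$-folds are not monotone (e.g.\ basket No.2 of Proposition \ref{list} has $P_{-4}=1>P_{-5}=0$), so an ``iterate the difference formula'' argument cannot be made water-tight in this generality.

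The paper's actual mechanism (Alexeev's trick, generalized) avoids both problems. If none of (I)--(III) holds, one chooses a strictly effective $E$ with $-mK_X-E$ strictly effective such that
$$h^0(-mK_X)-h^0(-mK_X-E)-h^0(E)+h^0(\OO_X)=0,$$
and, since $\rho(X)=1$ makes $E$ and $-mK_X-E$ ample, Kawamata--Viehweg vanishing turns this into the vanishing of the double difference of $\chi$. Reid's formula splits this into a regular part, which equals $\frac{m+1}{2}(-K_X)(-mK_X-E)E>0$, and local terms $c_Q(-mK_X)-c_Q(-mK_X-E)-c_Q(E)$, whose nonnegativity reduces to $G(x)=\sum_{j=0}^{l}F(x+jb)-\sum_{j=0}^{l}F(jb)\geq 0$ with $F(x)=\overline{x}(r-\overline{x})/2r$ and $l=\overline{m}$; checking $G$ at the endpoints $-jb$ is exactly where hypotheses (1)--(5) enter. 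So the congruence conditions play a purely local role in a fixed Riemann--Roch identity for the quadruple $(-mK_X,\,-mK_X-E,\,E,\,0)$, not a global monotonicity role for the Hilbert function; your proposal would need a genuinely new idea to close the gap at its first step.
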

 
\begin{proof} We generalize the argument of Alexeev \cite[2.18]{A} where the case $m=1$ is treated. 
%Our proof is essentially the same as Alexeev's. 

Assume that none of the conclusions holds, then there exists a strictly effective divisor $E$ such that $-mK_X-E$ is strictly effective and
$$
h^0(-mK_X)-h^0(-mK_X-E)-h^0(E)+h^0(\mathcal{O}_X)=0. 
$$

In fact, if $P_{-m}=1$ and $-mK_X\sim D$ is not a prime divisor, then we take $E$ to be one irreducible component of $D$; if $P_{-m}\geq 2$ and $|-mK_X|$ has fixed part, then we take $E$ to be one component in the fixed part; if $P_{-m}\geq 3$, $|-mK_X|$ does not have fixed part, but  is composed with a (rational) pencil of surfaces, then  $|-mK_X|=|nS|$ with $n\geq 2$ and we can take $E=S$.

By Kawamata--Viehweg vanishing theorem and $\rho(X)=1$, all higher cohomologies vanish for $\OO_X(-mK_X)$, $\OO_X(-mK_X-E)$, $\OO_X(E)$, and $\OO_X$. Hence
$$
\Delta\Delta_\chi(-mK_X, -mK_X-E, E, 0)=0,
$$
where the {\it double difference} of a function $f$ is defined by
$$
\Delta\Delta_f(a, a-d, b, b-d)=f(a)-f(a-d)-f(b)+f(b-d). 
$$
Then we have 
$$
\Delta\Delta_{\chi, {\rm reg}}(-mK_X, -mK_X-E, E, 0)+\Delta\Delta_{\chi, {\rm sing}}(-mK_X, -mK_X-E, E, 0)=0.
$$
It is clear to see that
$$
\Delta\Delta_{\chi, {\rm reg}}(-mK_X, -mK_X-E, E, 0)=\frac{m+1}{2}(-K_X)(-mK_X-E)E>0, 
$$
since $E$ and $-mK_X-E$ are ample by the construction and $\rho(X)=1$.
To get a contradiction, it is sufficient to show that 
$$
\Delta\Delta_{\chi, {\rm sing}}(-mK_X, -mK_X-E, E, 0)\geq 0 
$$
under the assumption of this theorem.  Thus it suffices to show that, for every single point $Q=(b,r)\in B$,
\begin{align}
c_Q(-mK_X)-c_Q(-mK_X-E)-c_Q(E)\geq 0. \label{qq}
\end{align}

Set $F(x):=\frac{\overline{x}(r-\overline{x})}{2r}$ for any integer $x$ and $l:=\overline{m}$. 
We may assume that the local index of $E$ at $Q$ is $i$ ($0\leq i<r$).

Then 
\begin{align}
{}& c_Q(-mK_X)-c_Q(-mK_X-E)-c_Q(E)\notag\\
={}&\Big(-\frac{(2r-l)(r^2-1)}{12r}+\sum_{j=0}^{2r-l-1}F(jb)\Big)\notag\\
{}&-\Big(-\frac{(2r-l-i)(r^2-1)}{12r}+\sum_{j=0}^{2r-l-i-1}F(jb)\Big)\notag\\
{}&-\Big(-\frac{i(r^2-1)}{12r}+\sum_{j=0}^{i-1}F(jb)\Big)\notag\\
={}&\sum_{j=0}^{2r-l-1}F(jb)-\sum_{j=0}^{2r-l-i-1}F(jb)-\sum_{j=0}^{i-1}F(jb)\notag\\
={}&\sum_{j=2r-l-i}^{2r-l-1}F(jb)-\sum_{j=0}^{i-1}F(jb)\notag\\
={}&\sum_{j=l+1}^{l+i}F(jb)-\sum_{j=0}^{i-1}F(jb)\notag\\
={}&\sum_{j=i}^{l+i}F(jb)-\sum_{j=0}^{l}F(jb)\notag\\
={}&\sum_{j=0}^{l}F(ib+jb)-\sum_{j=0}^{l}F(jb). \label{qq2}
\end{align}
Then to prove inequality (\ref{qq}),
it suffices to prove that 
$$
G(x):=\sum_{j=0}^{l}F(x+jb)-\sum_{j=0}^{l}F(jb)\geq 0
$$
for arbitrary integer $x$. 

Note that $G(x)$ is a periodic  piecewise quadratic function with negative leading coefficients. Hence the minimal value can only be reached at end points of each piece. It is easy to see that the set of end points is $\{nr-jb\mid n\in \mathbb{Z}, j=0,1,\ldots,l\}$. Hence $G(x)\geq 0$ is equivalent to $G(-jb)\geq 0$ for all $j=0,1,\ldots,l.$ Note that $G(0)=G(-lb)=0$. 

If $m\equiv 0,1 \mod r$, there is nothing to prove. 

If $m\equiv 2 \mod r$, then $G(-b)=F(b)-F(2b)$. It is easy to see that $F(b)-F(2b)\geq0$ is equivalent to $3b\geq r$. 

If $m\equiv 3 \mod r$, then $G(-b)=G(-2b)=F(b)-F(3b)$. It is easy to see that $F(b)-F(3b)\geq0$ is equivalent to $4b\geq r$.

If $m\equiv 4 \mod r$, then $G(-b)=G(-3b)=F(b)-F(4b)$ and $G(-2b)=F(b)+F(2b)-F(3b)-F(4b)$. 

If $m\equiv -1\mod r$, then $G(x)=\sum_{j=0}^{r-1}F(x+jb)-\sum_{j=0}^{r-1}F(jb)=0$. 

If $m\equiv -2\mod r$, then $G(x)=\sum_{j=0}^{r-2}F(x+jb)-\sum_{j=0}^{r-2}F(jb)=F(b)-F(x+(r-1)b)$. It is easy to see that $F(b)-F(x+(r-1)b)\geq 0$ for all $x$ if and only if $b=\lfloor \frac{r}{2}\rfloor$.  

So we have proved the theorem.
\end{proof}

As a special case of Theorem \ref{k1},  Alexeev proved the following theorem. 
%we know the geometry of $|-K_X|$ when $P_{-1}$ is large.

\begin{thm}[{\cite[2.18]{A}}]\label{p3} Let $X$ be a $\bQ$-Fano 3-fold. 
If $P_{-1}\geq 3$, then $|-K_X|$ has no fixed part and is not composed with a pencil of surfaces. 
\end{thm}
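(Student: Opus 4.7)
The plan is to obtain Theorem \ref{p3} as an immediate corollary of Theorem \ref{k1} specialized to $m = 1$. The first step is to verify that the hypotheses of Theorem \ref{k1} are vacuously satisfied when $m = 1$: indeed, condition (1) asks that $m \equiv 0, \pm 1 \pmod{r}$, and for $m = 1$ this holds for every positive integer $r$ because $1 \equiv 1 \pmod{r}$. Hence no restriction whatsoever is placed on the basket $B$ of orbifold points, and Theorem \ref{k1} applies unconditionally with $m = 1$.

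With the hypotheses in force, I would then simply read off the trichotomy of Theorem \ref{k1}. The assumption $P_{-1} \geq 3$ immediately excludes conclusion (I), which forces $P_{-m} = 1$, and conclusion (II), which forces $P_{-m} = 2$. The only remaining possibility is (III), which gives precisely the two conclusions of Theorem \ref{p3}: $|-K_X|$ has no fixed part and is not composed with a pencil of surfaces.

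Since the real work is done inside Theorem \ref{k1}, there is essentially no obstacle at the level of this corollary. It is nonetheless worth recording why the $m = 1$ instance of the engine behind Theorem \ref{k1} is particularly clean: if some strictly effective $E$ witnessed a failure (a fixed component, or a component of a pencil), then by Kawamata--Viehweg vanishing and $\rho(X) = 1$ one has
\[
\Delta\Delta_\chi(-K_X,\,-K_X - E,\,E,\,0) = 0,
\]
while the regular part $\tfrac{m+1}{2}(-K_X)(-mK_X - E)E$ is strictly positive (both factors $E$ and $-K_X - E$ are ample by $\rho(X) = 1$). For $m = 1$ the singular contribution at a point of type $\frac{1}{r}(1,-1,b)$ reduces, in the notation of the proof of Theorem \ref{k1}, to $G(x) = F(x) + F(x+b) - F(b)$ with $l = \overline{m} = 1$; the identities $G(0) = 0$ and $G(-b) = F(-b) - F(b) = 0$ (using the symmetry $F(x) = F(-x)$) together with the piecewise-quadratic concave structure of $G$ force $G \geq 0$, hence the singular contribution is non-negative. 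The resulting sign contradiction rules out the hypothetical $E$. Thus the only substantive content one would need to verify oneself, were Theorem \ref{k1} not available, is this elementary symmetry of $F$ at $l = 1$.
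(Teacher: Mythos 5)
Your proposal is correct and matches the paper's treatment: the paper states Theorem \ref{p3} precisely as the $m=1$ special case of Theorem \ref{k1} (whose proof is itself modeled on Alexeev's argument), and since $1\equiv 1 \pmod r$ condition (1) holds for every pair $(b,r)\in B$, so $P_{-1}\geq 3$ rules out cases (I) and (II) and leaves exactly conclusion (III). Your supplementary check that $G(x)=F(x)+F(x+b)-F(b)\geq 0$ via $G(0)=G(-b)=0$ and the endpoint structure is exactly the $l=1$ instance of the computation inside the proof of Theorem \ref{k1}.
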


Hence we only need to deal with the case when $P_{-1}<3$. For this purpose, we prove the following theorem.

\begin{thm}\label{k2}
Let $X$ be a $\mathbb{Q}$-Fano $3$-fold. Fix a positive integer $m$. Assume that one of the following holds:
\begin{itemize}
\item[(i)] $P_{-m}=1$ and $E\in |-mK_X|$ is an effective prime divisor;

\item[(ii)] $P_{-m}=2$ and $|-mK_X|$ does not have fixed part.
% and is composed with an irreducible rational pencil of surfaces. 
\end{itemize}
Write $n_0:=\min\{n\in \ZZ^+\mid P_{-nm}\geq2\}$. For any integer $l\geq n_0$, write $l=sn_0+t$ with $s\in \mathbb{Z}$ and $0\leq t \leq n_0-1$. Take $$l_0=\min\{l\in \mathbb{Z}_{\geq n_0}\mid P_{-lm}>s+1\}.$$ 

Then $|-l_0mK_X|$ does not have fixed part and is not composed with a pencil of surfaces. 
\end{thm}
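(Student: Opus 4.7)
The plan is to argue by contradiction, exploiting the rigidity from $\rho(X) = 1$. Suppose $|-l_0 m K_X|$ either has a nontrivial fixed part or is composed with a pencil of surfaces. Write $l_0 = s_0 n_0 + t_0$ with $0 \leq t_0 < n_0$; by the defining property of $l_0$, we have $P_{-l_0 m} \geq s_0 + 2$.

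I would first show that the fixed part $F_{l_0}$ of $|-l_0 m K_X|$ vanishes. In case (ii), any prime component of $F_{l_0}$ would be contained in every member of $|-l_0 m K_X|$, in particular in $l_0 D'$ for every $D' \in |-m K_X|$, forcing it into the at-most-$1$-dimensional base locus of the movable pencil $|-mK_X|$ — impossible for a prime divisor. In case (i), the analogous comparison with $l_0 E \in |-l_0 m K_X|$ shows $F_{l_0} = a_{l_0} E$ for some $a_{l_0} \geq 0$; if $a_{l_0} \geq 1$, then division by the defining section of $a_{l_0} E$ yields an injection $H^0(-l_0 m K_X) \hookrightarrow H^0(-(l_0 - a_{l_0}) m K_X)$, so $P_{-l_0 m} \leq P_{-(l_0 - a_{l_0})m}$; but since $l_0 - a_{l_0} \leq (s_0 + 1) n_0 - 1$, the minimality of $l_0$ forces the right-hand side to be $\leq s_0 + 1$, contradicting $P_{-l_0 m} \geq s_0 + 2$. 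Hence $F_{l_0} = 0$. The identical argument at $l = n_0$ yields $F_{n_0} = 0$, so $|-n_0 m K_X| = |S|$ is an irreducible movable pencil.

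Finally, to rule out that $|-l_0 m K_X|$ is composed with a pencil, I would apply Lemma \ref{pencils} along the chain of comparable divisors $(l - n_0) E + D_0 \leq (l - n_0 + 1) E + D_0$ for fixed $D_0 \in |S|$ and $n_0 \leq l < l_0$, together with the observation that if $|-l m K_X|$ were not composed with a pencil then adjoining the fixed divisor $(l_0 - l) E$ would produce a sublinear system of $|-l_0 m K_X|$ mapping to a variety of dimension $\geq 2$, contradicting the contradiction hypothesis. This identifies all the pencils with $|S|$, yielding $|-l_0 m K_X| = |k_{l_0} S|$ with $k_{l_0} = P_{-l_0 m} - 1 \geq s_0 + 1 \geq 2$. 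The specific member $l_0 E \in |-l_0 m K_X|$, on a resolution inducing the fibration for $|S|$, must then decompose as a sum of $k_{l_0}$ pencil fibers; since $l_0 E$ is supported on $E$ alone, support considerations force each such fiber to be $(l_0/k_{l_0}) E$, requiring $k_{l_0} \mid l_0$ and giving $S \sim (l_0/k_{l_0}) E \sim -(l_0/k_{l_0}) m K_X$. Hence $|S| \subseteq |-(l_0/k_{l_0}) m K_X|$. But $l_0/k_{l_0} \leq ((s_0 + 1) n_0 - 1)/(s_0 + 1) < n_0$, so by the minimality of $n_0$ we get $P_{-(l_0/k_{l_0}) m} = 1$, so $|-(l_0/k_{l_0}) m K_X|$ is $0$-dimensional, contradicting $\dim |S| = 1$. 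The main subtlety, which I expect to require the most care, is the propagation of the pencil structure from $|-l_0 m K_X|$ downward via Lemma \ref{pencils}, and in case (ii) adapting the $E$-based arguments by using a general member of $|-mK_X|$ in place of $E$.
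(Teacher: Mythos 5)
Your no-fixed-part step is essentially the paper's own argument and is fine. For case (i), your pencil-exclusion argument is correct but takes a genuinely different route: the paper runs a case analysis on $l_0=sn_0+t$, using the minimality of $l_0$, superadditivity $P_{-a-b}\geq P_{-a}+P_{-b}-1$ and Lemma \ref{pencils} to force $t=0$, $s=1$, and only then brings in the prime member $E$ at $l_0=n_0$; you instead exploit the single member $l_0E\in|-l_0mK_X|$ directly, splitting it into $k_{l_0}\geq s_0+1$ pencil members supported on $E$, concluding $S\sim cE\sim -cmK_X$ with $c=l_0/k_{l_0}<n_0$ and contradicting the minimality of $n_0$. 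That is shorter and bypasses most of the bookkeeping. Two small remarks: the equality of the multiplicities $c_i$ comes not from supports alone but from the fact that the pieces are mutually linearly equivalent and no nonzero effective divisor is linearly trivial; and your chain-of-divisors/Lemma \ref{pencils} step is actually never used in the final contradiction (the identity of the pencil is irrelevant), so it could be dropped.

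The genuine gap is case (ii). Replacing $E$ by a general member $D'$ of $|-mK_X|$ does not work as stated: assumption (ii) only says $|-mK_X|$ has no fixed part, so $D'$ is not known to be prime, and without primality the ``support considerations'' no longer force the pencil members occurring in $l_0D'$ to be proportional to $D'$; moreover your target contradiction ``$c<n_0$ violates the minimality of $n_0$'' is vacuous when $n_0=1$, which is exactly the situation in case (ii). To repair the argument you must first prove that the general member of a fixed-part-free pencil on $X$ (with $q(X)=0$) is irreducible and reduced --- this is precisely the content of the paper's Lemma 2.2 ($n=h^0(D)-1$, i.e. the Stein factorization onto $\Gamma\cong\bP^1$ has degree one), and it is the ingredient your sketch silently needs. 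Once $D'$ is prime, your mechanism gives $c\,k_{l_0}=l_0$ with $k_{l_0}\geq P_{-l_0m}-1\geq l_0+1$, which is already absurd, so the case closes; alternatively, the paper's route disposes of case (ii) purely numerically, since its case analysis ends with $l_0=n_0$ and $P_{-n_0m}\geq 3$, contradicting $P_{-m}=2$. As written, your proposal leaves this step unproved, so case (ii) stands as a real, though fixable, gap.
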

\begin{proof}
First we assume that $|-l_0mK_X|$ has a base component $E_{l_0}$. It follows that $P_{-m}=1$ and $E_{l_0}=E$. Thus, by definition, we have $l_0>1$. Hence
\begin{align*}
P_{-(l_0-1)m}={}&h^0(-l_0mK_X-(-mK_X))\\
={}&h^0(-l_0mK_X-E_{l_0})=h^0(-l_0mK_X)>s+1,
\end{align*}
which contradicts the minimality of $l_0$. The similar argument implies that $|-n_0mK_X|$ does not have fixed part. 

Now assume that $|-l_0mK_X|$ is composed with a (rational) pencil of surfaces, i.e. 
$$|-l_0mK_X|=|(P_{-l_0m}-1)S|,$$ 
where $|S|$ is an irreducible rational pencil. Write $l_0=sn_0+t$. Since $P_{-n_0m}\geq 2$, we have  $P_{-sn_0m}\geq s+1$. 

If $t>0$, by the minimality of $l_0$ we get $P_{-sn_0m}= s+1$. So we can write $|{-sn_0mK_X}|= |sS|$ by Lemma \ref{pencils} since $|-n_0mK_X|$ does not have fixed part and $|-sn_0mK_X|\preceq |-l_0mK_X|$.   Now
\begin{align*}
-tmK_X{}&\sim -l_0mK_X-(-sn_0mK_X)\sim (P_{-l_0m}-1)S-sS\\
{}&= (P_{-l_0m}-1-s)S \geq S. 
\end{align*}
This implies that $P_{-tm}\geq 2$, which contradicts the minimality of $n_0$. Hence $t=0$ and $l_0=sn_0$. 

If $s\geq 2$, by the minimality of $l_0$ we get $P_{-(s-1)n_0m}= s\geq 2$. We can write   $|{-(s-1)n_0mK_X}|= |(s-1)S|$ by  Lemma \ref{pencils}. 
Hence
\begin{align*}
-n_0mK_X{}&\sim -l_0mK_X-(-(s-1)n_0mK_X)\sim (P_{-l_0m}-1)S-(s-1)S\\
{}&= (P_{-l_0m}-s)S\geq 2S. 
\end{align*}
This implies that $P_{-n_0m}\geq 3$, which contradicts the minimality of $l_0$. 

Hence $s=1$ and $l_0=n_0$. By $P_{-n_0m}\geq 3$, we have $n_0>1$.  This implies, by assumption, $P_{-m}=1$ and $-mK_X\sim E$ is a fixed prime divisor. Since $E\leq (P_{-Nm}-1)S\sim -n_0mK_X$ and $E$ is reduced and irreducible, $E\leq S_0$ for certain surface $S_0\in |S|$. Hence
\begin{align*}
-(n_0-1)mK_X{}&\sim -n_0mK_X-(-mK_X)\sim (P_{-n_0m}-1)S-E\\
{}&\geq  (P_{-n_0m}-2)S+(S_0-E)\geq S. 
\end{align*}
This implies that $P_{-(n_0-1)m}\geq 2$, which contradicts the minimality of $n_0$.  We are done. \end{proof}

Now let us explain the strategy to prove Theorem \ref{main1}. Firstly, we divide all $\bQ$-Fano 3-folds into several families, roughly speaking, by the value of $P_{-1}$. Then in each family, we may take a suitable $m$ satisfying the condition of Theorem \ref{k1}. Applying Theorem \ref{k2} to $m$, we are able to find the number $l_0$ and so $\delta_1(X)\leq l_0m$. In order to find such $l_0$, or an upper bound of $l_0$, we may assume that $l_0$ is sufficiently large, say, $l_0\geq 9$, then by the assumption of Theorem \ref{k2}, we know the value of $P_{-m}, P_{-2m}, P_{-3m}, \ldots, P_{-8m}$. Then, by Chen--Chen's method (\cite{CC}) on the analysis of baskets,  we can recover all possibilities for baskets of singularities, of which each possibility can be proved to be either impossible or very easy to treat. For this purpose, we need to recall relevant materials on baskets, packings, the canonical sequence, and so on. 

\subsection{Weighted baskets}\label{cc method}\

 All contents of this subsection are mainly from Chen--Chen \cite{CC, explicit}. We list them as follows:
\begin{enumerate}
\item Let $B=\{(b_i,r_i)\mid i=1,\cdots, s; 0<b_i\leq \frac{r_i}{2}; b_i
\text{\ is coprime to\ } r_i\}$ be a basket. We set $\sigma(B):=\sum_i b_i$, $\sigma'(B):=\sum_i\frac{b_i^2}{r_i}$, and $\Delta^n(B)=\sum_i
\big(\frac{\overline{b_in}(r_i-\overline{b_in})}{2r_i}-
\frac{b_in(r_i-b_in)}{2r_i}\big)$ for any integer $n>1$.

\item The new (generalized) basket
$$B':=\{(b_1+b_2, r_1+r_2), (b_3, r_3),\cdots, (b_s,r_s)\}$$ is called a
{\it packing} of $B$, denoted as $B\succeq B'$. Note that $\{(2,4)\}=\{(1,2), (1,2)\}$. We call $B\succ B'$ a {\it prime packing} if $b_1r_2-b_2r_1=1$. A composition of finite
packings is also called a packing. So the relation ``$\succeq$'' is a partial ordering on the set of baskets.

\item Note that for a weak $\bQ$-Fano $3$-fold $X$, all the anti-plurigenera  $P_{-n}$ can be determined by Reid's basket $B_X$ and $P_{-1}(X)$. This leads to the notion of  ``weighted basket''.  We call a pair $\mathbb{B}=(B, \tilde{P}_{-1})$ a {\it weighted basket} if $B$ is
a basket and $\tilde{P}_{-1}$ is a non-negative integer. We write
$(B, \tilde{P}_{-1})\succeq (B',\tilde{P}_{-1})$ if $B\succeq B'$.

\item Given a weighted basket ${\mathbb B}=(B, \tilde{P}_{-1})$, define $\tilde{P}_{-1}({\mathbb B}):=\tilde{P}_{-1}$ and the volume
$$-K^3({\mathbb B}):=2\tilde{P}_{-1}+\sigma(B)-\sigma'(B)-6.$$
For all $m\geq 1$, we define the ``anti-plurigenus'' in the following inductive way:
\begin{align*}
{}&\tilde{P}_{-(m+1)}-\tilde{P}_{-m}\\
={}& \frac{1}{2}(m+1)^2(-K^3({\mathbb B})+\sigma'(B))+2-\frac{m+1}{2}\sigma-\Delta^{m+1}(B).
\end{align*}
Note that, if we set ${\mathbb B}=(B_X, P_{-1}(X))$ for a given weak $\bQ$-Fano 3-fold $X$, then we can verify directly that $-K^3({\mathbb 
B})=-K_X^3$ and $\tilde{P}_{-m}({\mathbb B})=P_{-m}(X)$ for all $m\geq
1$.
\end{enumerate}

\begin{prpty}[{\cite[Section 3]{explicit}}]
Assume ${\mathbb B}:=(B, \tilde{P}_{-1})\succeq \mathbb{B}':=(B',
\tilde{P}_{-1})$. Then
\begin{itemize}
\item[(i)] $\sigma(B)=\sigma(B')$ and
$\sigma'(B)\geq \sigma'(B')$;

\item[(ii)] For all integer $n\geq 1$, $\Delta^n(B)\geq
\Delta^n(B')$;

\item[(iii)] $-K^3({\mathbb B})+\sigma'(B)=-K^3({\mathbb B}')+\sigma'(B')$;

\item[(iv)] $-K^3({\mathbb B})\leq -K^3({\mathbb B}')$;

\item[(v)] $\tilde{P}_{-m}({\mathbb B})\leq \tilde{P}_{-m}({\mathbb B}')$ for all
$m\geq 2$.
\end{itemize}
\end{prpty}

%\subsubsection{Canonial sequence of baskets}
Next we recall the ``canonical'' sequence of a basket $B$. Set
$S^{(0)}:=\{\frac{1}{n}\mid n\geq 2\}$,
$S^{(5)}:=S^{(0)}\cup\{\frac{2}{5}\}$, and inductively for all $n \ge
5$,
$$S^{(n)}:=S^{(n-1)}\cup\big\{\frac{b}{n}\mid 0<b<\frac{n}{2},\ b\
\text{is coprime to}\ n\big\}.$$
Each set $S^{(n)}$ gives a division of
the interval $(0,\frac{1}{2}]=\underset{i}\bigcup
[\omega_{i+1}^{(n)}, \omega^{(n)}_i]$ with
$\omega_{i}^{(n)},\omega_{i+1}^{(n)} \in S^{(n)}$. Let
$\omega_{i+1}^{(n)}=\frac{q_{i+1}}{p_{i+1}}$ and
$\omega^{(n)}_i=\frac{q_i}{p_i}$ with $\text{g.c.d}(q_l,p_l)=1$ for
$l=i,i+1$. Then it is easy to see that $q_ip_{i+1}-p_iq_{i+1}=1$
for all $n$ and $i$ (cf. \cite[Claim A]{explicit}).

Now given a basket ${B}=\{(b_i, r_i)\mid i=1,\cdots,s\}$, we define new baskets $\BB^{(n)}(B)$, where $\BB^{(n)}(\cdot)$ can be regarded as an operator on the set of baskets.  
For each $(b_i,r_i)
\in B$, if $\frac{b_i}{r_i} \in S^{(n)}$, then we set
$\BB^{(n)}_i:=\{(b_i,r_i)\}$. If $\frac{b_i}{r_i}\not\in S^{(n)}$,
then $\omega^{(n)}_{l+1} < \frac{b_i}{r_i} < \omega^{(n)}_{l}$ for
some $l$. We write $\omega^{(n)}_{l}=\frac{q_l}{p_l}$ and
$\omega^{(n)}_{l+1}=\frac{q_{l+1}}{p_{l+1}}$ respectively.
 In this situation, we can unpack $(b_i,r_i)$ to
$\BB^{(n)}_i:=\{(r_i q_l-b_ip_l) \times (q_{l+1},p_{l+1}),(-r_i
q_{l+1}+b_i p_{l+1}) \times (q_l,p_l)\}$. Adding up those
$\BB^{(n)}_i$, we get a new basket $\BB^{(n)}(B)$, which is
uniquely defined according to the construction and $\BB^{(n)}(B)
\succeq B$ for all $n$. Note that, by the definition,  $B=\BB^{(n)}(B)$ for sufficiently large $n$.

Moreover, we have
$$\BB^{(n-1)}(B)=\BB^{(n-1)}(\BB^{(n)}(B))
\succeq \BB^{(n)}(B)$$
 for all $n\geq 1$ (cf. \cite[Claim B]{explicit}). Therefore we have a chain of baskets
 $$\BB^{(0)}(B)
\succeq \BB^{(5)}(B) \succeq \cdots \succeq \BB^{(n)}(B) \succeq \cdots \succeq B. $$
The step $\BB^{(n-1)}(B) \succeq \BB^{(n)}(B)$ can be achieved by a
number of  successive prime packings. Let $\epsilon_n(B)$ be the
number of such prime packings. For any $n>0$, set $B^{(n)}:=\BB^{(n)}(B)$. 

The following properties are essential to represent $B^{(n)}$.

\begin{lem}[{\cite[Lemma 2.16]{explicit}}]\label{delta}For the above sequence $\{B^{(n)}\}$, the following statements hold:
\begin{itemize}
\item[(i)]
$\Delta^j(B^{(0)})= \Delta^j(B)$ for $j=3,4$;
\item[(ii)]
$\Delta^j(B^{(n-1)})= \Delta^j(B^{(n)})$ for all $j <n$;
\item[(iii)]
$\Delta^n(B^{(n-1)})= \Delta^n(B^{(n)})+\epsilon_n(B)$.
\end{itemize}
\end{lem}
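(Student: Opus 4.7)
My approach is to reduce Lemma \ref{delta} to a single numerical identity for one prime packing. Since $\Delta^j$ is additive over basket entries, the change $\Delta^j(B^{(n-1)}) - \Delta^j(B^{(n)})$ is a sum over the $\epsilon_n(B)$ prime packings realizing the step. Using the Farey-like property $q_i p_{i+1} - p_i q_{i+1} = 1$ already recorded in the paper, together with the elementary fact that the first fraction to be inserted between two consecutive terms of a Farey-type sequence is their mediant, I claim every prime packing contributing to $\epsilon_n(B)$ has the form $(q_l, p_l), (q_{l+1}, p_{l+1}) \to (q_l + q_{l+1}, p_l + p_{l+1})$ with $p_l + p_{l+1} = n$. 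Thus it suffices to prove the following single-step identity for each such packing.

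The single-step identity asserts that for $(b_1, r_1), (b_2, r_2)$ with $b_1 r_2 - b_2 r_1 = 1$, setting $n := r_1 + r_2$, one has
\[
F_j(b_1, r_1) + F_j(b_2, r_2) - F_j(b_1+b_2, r_1+r_2) = \begin{cases} 0, & j < n, \\ 1, & j = n, \end{cases}
\]
where $F_j(b,r) := \frac{\overline{bj}(r - \overline{bj})}{2r} - \frac{bj(r - bj)}{2r}$. To handle it I split $F_j(b,r) = \frac{(bj)^2}{2r} - \frac{bj}{2} + \frac{\overline{bj}(r - \overline{bj})}{2r}$. The middle terms cancel in the triple combination, and the polynomial part contributes exactly $\frac{j^2}{2 r_1 r_2 n}$ via the decisive identity $\frac{b_1^2}{r_1} + \frac{b_2^2}{r_2} - \frac{(b_1+b_2)^2}{r_1+r_2} = \frac{(b_1 r_2 - b_2 r_1)^2}{r_1 r_2 n} = \frac{1}{r_1 r_2 n}$.

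The main obstacle is the sawtooth residue. Writing $s_i = \overline{b_i j}$ (mod $r_i$), $S = \overline{(b_1+b_2) j}$ (mod $n$), $k_i = \lfloor b_i j / r_i \rfloor$, $K = \lfloor (b_1+b_2) j / n \rfloor$, $\alpha = K - k_1$, $\beta = K - k_2$, the linear identity $s_1 + s_2 - S = \alpha r_1 + \beta r_2$ lets the vanishing be recast as
\[
\alpha\bigl[(\alpha+1)r_1 - 2 s_1\bigr] + \beta\bigl[(\beta+1) r_2 - 2 s_2\bigr] = 0.
\]
For $j < n$ the bounds $|b_i j/r_i - (b_1+b_2) j/n| = j/(r_i n) < 1/r_i \leq 1/2$ force $\alpha \in \{0,-1\}$ and $\beta \in \{0,1\}$, leaving four cases. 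Multiplying $b_1 r_2 - b_2 r_1 = 1$ through by $j$ and substituting $b_i j = k_i r_i + s_i$ yields an explicit expression for $j$ in each case: $(0,0)$ is trivial; $(-1,0)$ forces $s_1 = 0$, since otherwise the formula gives $j \geq n$; and the cases $(0,1)$ and $(-1,1)$ are themselves incompatible with $j < n$. For $j = n$ one has $s_1 = 1$, $s_2 = r_2 - 1$, $S = 0$ directly from $b_1 r_2 - b_2 r_1 = 1$, and the total evaluates to exactly $1$. Summing over the $\epsilon_n(B)$ prime packings yields (ii) and (iii); part (i) then follows by telescoping the chain $B^{(0)} \succeq B^{(5)} \succeq B^{(6)} \succeq \cdots \succeq B$, where every step has $n \geq 5 > j$ for $j \in \{3,4\}$.
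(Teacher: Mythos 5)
The paper itself gives no proof of this lemma: it is quoted verbatim from Chen--Chen \cite[Lemma 2.16]{explicit}, so there is no in-paper argument to compare against; your proposal has to stand on its own, and in substance it is the standard one-step computation behind that result and it is essentially correct. The reduction to a single mediant merge is legitimate: every prime packing realizing $B^{(n-1)}\succeq B^{(n)}$ is of the form $(q_{l+1},p_{l+1})+(q_l,p_l)\to(q_l+q_{l+1},p_l+p_{l+1})$ with $p_l+p_{l+1}=n$, and this can be checked directly from the paper's unpacking formula (when the new point $\mu=(q_l+q_{l+1})/n$ is inserted, the multiplicity of the endpoint an entry keeps, $r_iq_l-b_ip_l$ resp.\ $b_ip_{l+1}-r_iq_{l+1}$, is unchanged, so the passage consists exactly of such merges); since any realization by prime packings has the same length, the sum over these merges computes $\Delta^j(B^{(n-1)})-\Delta^j(B^{(n)})$ and their number is $\epsilon_n(B)$. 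Your algebraic identity is also correct: with $T_j:=F_j(b_1,r_1)+F_j(b_2,r_2)-F_j(b_1+b_2,n)$ one has $2T_j=\alpha\bigl[(\alpha+1)r_1-2s_1\bigr]+\beta\bigl[(\beta+1)r_2-2s_2\bigr]$, the bounds force $\alpha\in\{0,-1\}$, $\beta\in\{0,1\}$ for $j<n$, and at $j=n$ the values $s_1=1$, $s_2=r_2-1$, $S=0$ give exactly $1$; telescoping then yields (i)--(iii).

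One sub-case is not justified as written. For $(\alpha,\beta)=(0,1)$ your ``explicit expression'' is $j=r_1r_2+r_2s_1-r_1s_2$, which forces $j\geq n$ only when $s_1\geq 1$; if $s_1=0$ it can well be smaller than $n$ (e.g.\ $s_2=r_2-1$ gives $j=r_1$), so the formula alone does not make this case ``incompatible with $j<n$''. The case is nevertheless vacuous, but for a different reason, namely the strict inequality you already derived: $s_1=0$ means $b_1j/r_1$ is an integer, and since $(b_1+b_2)j/n=b_1j/r_1-j/(r_1n)$ with $0<j/(r_1n)<1$, one gets $K=k_1-1$, i.e.\ $\alpha=-1$, contradicting $\alpha=0$. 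Note that this extra observation is genuinely needed, because in case $(0,1)$ the quantity $2T_j=2(r_2-s_2)$ is strictly positive, so leaving the case open would break (ii). With that one line added (the exclusion of $(-1,1)$ by $j\geq r_1r_2+r_1\geq n$ is fine as stated), your proof is complete.
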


It follows that $\Delta^j(B^{(n)})=\Delta^j(B) $ for all $j \leq n$ and
$$\epsilon_n(B)=\Delta^n(B^{(n-1)})-\Delta^n(B^{(n)})=\Delta^n(B^{(n-1)})-\Delta^n(B).$$

Moreover, given a weighted basket ${\mathbb B}=(B, \tilde{P}_{-1})$, we
can similarly consider $\BB^{(n)}({\mathbb B}):=(B^{(n)},
\tilde{P}_{-1})$. It follows that
$$\tilde{P}_{-j}(\BB^{(n)}({\mathbb B}))=\tilde{P}_{-j}({\mathbb
B})$$
for all  $j \leq n$. Therefore we can realize the
canonical sequence of weighted baskets as an approximation of weighted
baskets via anti-plurigenera.

%\subsubsection{Solving formal baskets by anti-plurigenera}\label{solving}
We now recall the relation between weighted baskets and
anti-plurigenera more closely. For a given weighted basket ${\mathbb B}=(B,\tilde{P}_{-1})$, we start by computing the non-negative number $\epsilon_n$ and $B^{(0)}$, $B^{(5)}$  in terms of $\tilde{P}_{-m}$. {}From the definition of $\tilde{P}_{-m}$ we get
\begin{align*}
\sigma(B)={}&10-5\tilde{P}_{-1}+\tilde{P}_{-2},\\
\Delta^{m+1}={}&(2-5(m+1)+2(m+1)^2)+\frac{1}{2}(m+1)(2-
3m)\tilde{P}_{-1}\\
{}&+\frac{1}{2}m(m+1)\tilde{P}_{-2}+
\tilde{P}_{-m}-\tilde{P}_{-(m+1)}.
\end{align*}
In particular, we have
\begin{align*}
\Delta^3={}&5-6\tp_{-1}+4\tp_{-2}-\tp_{-3};\\
\Delta^4={}&14-14\tp_{-1}+6\tp_{-2}+\tp_{-3}-\tp_{-4}.
\end{align*}
Assume $B^{(0)}=\{n_{1,r}^0\times (1,r)\mid r\geq 2\}$. By Lemma
\ref{delta}, we have
\begin{align*}
\sigma(B)={}&\sigma(B^{(0)})=\sum n_{1,r}^0;\\
\Delta^3(B)={}&\Delta^3(B^{(0)})=n_{1,2}^0;\\
\Delta^4(B)={}&\Delta^4(B^{(0)})=2n_{1,2}^0+n_{1,3}^0.
\end{align*}
Thus we get $B^{(0)}$ as follows: $$\begin{cases}
n_{1,2}^0=5-6\tp_{-1}+4\tp_{-2}-\tp_{-3};\\
n_{1,3}^0=4-2\tp_{-1}-2\tp_{-2}+3\tp_{-3}-\tp_{-4};\\
n_{1,4}^0=1+3\tp_{-1}-\tp_{-2}-2\tp_{-3}+\tp_{-4}-\sigma_5;\\
n_{1,r}^0=n_{1,r}^0, r\geq 5,
\end{cases}$$
where $\sigma_5:=\sum_{r\geq 5} n_{1,r}^0$. A computation gives
$$\epsilon_5=2+\tp_{-2}-2\tp_{-4}+\tp_{-5}-\sigma_5.$$
Therefore we get $\mB^{(5)}=\{n_{1,r}^5\times (1,r), n_{2,5}^5\times (2,5)\mid r\geq 2\}$ as follows:
$$\begin{cases}
n_{1,2}^5=3-6\tp_{-1}+3\tp_{-2}-\tp_{-3}+2\tp_{-4}-\tp_{-5}+\sigma_5;\\
n_{2,5}^5=2+\tp_{-2}-2\tp_{-4}+\tp_{-5}-\sigma_5;\\
n_{1,3}^5=2-2\tp_{-1}-3\tp_{-2}+3\tp_{-3}+\tp_{-4}-\tp_{-5}+\sigma_5;\\
n_{1,4}^5=1+3\tp_{-1}-\tp_{-2}-2\tp_{-3}+\tp_{-4}-\sigma_5;\\
n_{1,r}^5=n_{1,r}^0, r\geq 5.
\end{cases}$$
Because $\mB^{(5)}=\mB^{(6)}$, we see $\epsilon_6=0$ and on the
other hand
$$\epsilon_6=3\tp_{-1}+\tp_{-2}-\tp_{-3}-\tp_{-4}-\tp_{-5}+\tp_{-6}-\epsilon=0$$
where $\epsilon:=2\sigma_5-n_{1,5}^0 \ge 0$.

Going on a similar calculation, we get
\begin{align*}
\epsilon_7={}&1+\tilde{P}_{-1}+\tilde{P}_{-2}-\tilde{P}_{-5}-\tilde{P}_{-6}
+\tilde{P}_{-7}-2\sigma_5+2n^0_{1,5}+n^0_{1,6};\\
 \epsilon_{8} ={}&
2\tilde{P}_{-1}+\tilde{P}_{-2}+\tilde{P}_{-3}-\tilde{P}_{-4}-\tilde{P}_{-5}
-\tilde{P}_{-7}+\tilde{P}_{-8}\\
{}&-3\sigma_5+3 n^0_{1,5}+2 n^0_{1,6}+n^0_{1,7}.
\end{align*}

%\subsubsection{Geometric inequalities}

A weighted basket ${\mathbb B}=(B,\tilde{P}_{-1})$ is said to be
{\it geometric} if ${\mathbb B}=(B_X, P_{-1}(X))$ for a 
$\bQ$-Fano 3-fold $X$. Geometric baskets are subject to some geometric properties. 
By \cite{K}, we have that
$(-K_X\cdot c_2(X))> 0$. Therefore \cite[10.3]{YPG} gives the inequality
\begin{align}\label{kwmt}
\gamma(B):=\sum_{i} \frac{1}{r_i}-\sum_{i} r_i+24> 0.
\end{align}
For packings, it is easy to see the following lemma.
\begin{lem}\label{31} Given a packing of baskets $B_1\succeq
 B_2$, we have $\gamma(B_1) \geq \gamma(B_2)$. In particular, if
inequality (\ref{kwmt}) does not hold for $B_1$, then it does not hold for $B_2$.
\end{lem}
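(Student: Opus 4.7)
The plan is to reduce to a single elementary packing step and then compute directly. By the definition of $\succeq$ given in Subsection 3.2, any packing $B_1 \succeq B_2$ is a finite composition of elementary packings, each of which replaces two entries $(b_i,r_i),(b_j,r_j)$ of the basket by the single entry $(b_i+b_j, r_i+r_j)$. An induction on the length of this composition reduces the proof to a single elementary step, so from now on I would assume $B_2$ arises from $B_1$ by exactly one such replacement.

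For that step, the entries of the basket other than $(b_i,r_i)$ and $(b_j,r_j)$ contribute equally to $\gamma(B_1)$ and $\gamma(B_2)$ and cancel. Moreover the $-\sum r_k$ portion contributes $-r_i - r_j$ to $\gamma(B_1)$ and $-(r_i+r_j)$ to $\gamma(B_2)$, so these parts cancel as well. Hence only the reciprocal terms survive, giving
\begin{align*}
\gamma(B_1) - \gamma(B_2) = \frac{1}{r_i} + \frac{1}{r_j} - \frac{1}{r_i+r_j},
\end{align*}
which is strictly positive because $r_i, r_j \geq 2$ forces $\frac{1}{r_i} + \frac{1}{r_j} \geq \frac{2}{\max(r_i,r_j)} > \frac{1}{r_i+r_j}$. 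Combined with the induction, this yields $\gamma(B_1) \geq \gamma(B_2)$ in general.

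The ``in particular'' assertion is then a tautology: if inequality (\ref{kwmt}) fails for $B_1$, i.e.\ $\gamma(B_1) \leq 0$, then $\gamma(B_2) \leq \gamma(B_1) \leq 0$, so (\ref{kwmt}) fails for $B_2$. I do not anticipate any real obstacle. The only mild subtlety worth flagging is the observation that $\gamma$ depends on the multiset $\{r_i\}$ only and is blind to the numerators $b_i$, which is immediate from its definition in (\ref{kwmt}); this is what makes the cancellation of the $-r_i - r_j + (r_i+r_j)$ term possible.
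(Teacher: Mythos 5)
Your proof is correct and is exactly the computation the paper leaves implicit (the lemma is stated with ``it is easy to see'' and no written proof): reduce to a single elementary packing and observe $\gamma(B_1)-\gamma(B_2)=\frac{1}{r_i}+\frac{1}{r_j}-\frac{1}{r_i+r_j}>0$, the $-\sum r$ terms cancelling. The only cosmetic caveat is the identification $\{(2,4)\}=\{(1,2),(1,2)\}$, under which packing two $(1,2)$-points leaves the basket, hence $\gamma$, unchanged, so the per-step decrease need not be strict; but the lemma only asserts $\gamma(B_1)\geq\gamma(B_2)$, which your argument gives.
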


Lemma \ref{31} implies that, for two weighted baskets ${\mathbb B}_1\succeq {\mathbb B}_2$, if  ${\mathbb B}_1$ is non-geometric, then neither is ${\mathbb B}_2$. 

Furthermore, $-K^3({\mathbb B})=-K_X^3>0$ gives the
inequality
\begin{align}\label{vol}
\sigma'(B)<2P_{-1}+\sigma(B)-6. 
\end{align}
{}Finally, by \cite[Lemma 15.6.2]{Kollar}, if $P_{-m}>0$ and
$P_{-n} >0$, then
\begin{align}\label{2m}
 P_{-m-n} \ge P_{-m}+P_{-n}-1. 
\end{align}

\subsection*{Notation}
For the connivence of readers who are not familiar with Chen--Chen's method, we collect in the following the notation that will be frequently used in the rest of this paper.

$B=\{(b_i,r_i)\}$: a basket.

$\sigma(B)=\sum_i b_i$.

 $\sigma'(B)=\sum_i\frac{b_i^2}{r_i}$.

$\mathbb{B}=(B, \tp_{-1})$: a weighted basket.

$-K^3(\mathbb{B})$: the volume of $\mathbb{B}$.

$\tp_{-m}(\mathbb{B})$: the $m$-th anti-plurigenus of $\mathbb{B}$. We just write $P_{-m}$ instead if $\mathbb{B}$ is geometric.

$\{B^{(m)}\}$: the canonical sequence of $B$.

$B^{(m)}=\{n^m_{b,r}\times (b,r)\}$: expression of $B^{(m)}$.

$\epsilon_m(B)$: the number of prime packings between $B^{(m-1)}$ to $B^{(m)}$.

$\sigma_5=\sum_{r\geq 5} n_{1,r}^0$.

$\epsilon=2\sigma_5-n_{1,5}^0$.

$\gamma(B)=\sum_{i} \frac{1}{r_i}-\sum_{i} r_i+24$.

Note that usually we will omit $B$ in the symbols if $B$ is clear enough.

\subsection{$\mathbb{Q}$-Fano $3$-folds with  $h^0(-K)=2$}\label{P=2}\

In this subsection we prove the following theorem. 
\begin{thm}\label{p2}
Let $X$ be a $\mathbb{Q}$-Fano $3$-fold with $P_{-1}=2$. Then for any integer $m\geq 6$, $\dim\overline{\varphi_{-m}(X)}>1$. In particular, $\delta_1(X)\leq 6$. 
\end{thm}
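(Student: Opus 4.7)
The plan is to combine Theorems~\ref{k1} and~\ref{k2} with the weighted basket analysis of Subsection~\ref{cc method}. Since condition~(1) of Theorem~\ref{k1} is trivially satisfied by $m=1$ for every orbifold pair $(b,r)$, the hypothesis $P_{-1}=2$ forces case~(II): $|-K_X|$ has no fixed part and is composed with an irreducible rational pencil. Hence we are in situation~(ii) of Theorem~\ref{k2} with $m=1$, so $n_0=1$ and
\[
l_0 \;=\; \min\{l\in\mathbb{Z}^+\;:\;P_{-l}\geq l+2\},
\]
and Theorem~\ref{k2} guarantees that $|-l_0 K_X|$ is not composed with a pencil. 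Thus it suffices to show $l_0\leq 6$. Indeed, once this is established, for any $m\geq 6\geq l_0$ the inclusion $|-mK_X|\supseteq |-l_0 K_X|+E$, with $E$ any effective divisor in $|-(m-l_0)K_X|$ (which exists because inequality~(\ref{2m}) iterated from $P_{-1}=2$ gives $P_{-k}\geq k+1\geq 1$ for all $k\geq 1$), yields $\dim\overline{\varphi_{-m}(X)}\geq \dim\overline{\varphi_{-l_0}(X)}\geq 2$.

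Suppose for contradiction that $l_0\geq 7$. Then $P_{-l}\leq l+1$ for $l=1,\dots,6$, which combined with the lower bound $P_{-l}\geq l+1$ just mentioned yields $P_{-l}=l+1$ exactly. Plugging these values into the formulas of Subsection~\ref{cc method} gives $\sigma(B)=3$, $n^0_{1,2}=n^0_{1,3}=1$, $n^0_{1,4}=1-\sigma_5$, $\epsilon_5=1-\sigma_5$, while the identity $\epsilon_6=0$ (since $S^{(5)}=S^{(6)}$) forces $\epsilon=2\sigma_5-n^0_{1,5}=1$. Non-negativity of $n^0_{1,4}$ and $\epsilon_5$ gives $\sigma_5\leq 1$; together with $n^0_{1,5}\leq\sigma_5$ and $\epsilon=1$ this pins down $\sigma_5=1$, $n^0_{1,5}=1$, $n^0_{1,4}=0$, and $n^0_{1,r}=0$ for $r\geq 6$. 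Consequently
\[
B^{(0)}=B^{(5)}=B^{(6)}=\{(1,2),(1,3),(1,5)\}.
\]

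The main technical step is to enumerate the actual baskets $B$ whose canonical unpacking equals this $B^{(0)}$. Any pair $(b,r)\in B$ outside $S^{(0)}$ lies in some interval $(1/(k+1),1/k)$ and its unpacking contributes strictly positively to both $(1,k)$ and $(1,k+1)$; since $n^0_{1,4}=0$ no such pair may lie in $(1/5,1/3)$, and since $n^0_{1,r}=0$ for $r\geq 6$ no such pair may lie in $(1/(r'+1),1/r')$ for $r'\geq 5$. Thus the only admissible non-$S^{(0)}$ pairs have ratio in $(1/3,1/2)$, and the constraints $n^0_{1,2}=n^0_{1,3}=1$ leave $(b,r)=(2,5)$ as the sole possibility, giving exactly two candidate baskets:
\[
B=\{(1,2),(1,3),(1,5)\}\qquad\text{or}\qquad B=\{(1,5),(2,5)\}.
\]
Computing $-K^3(\mathbb{B})=2\tp_{-1}+\sigma(B)-\sigma'(B)-6$ yields $-\tfrac{1}{30}$ and $0$ respectively, both contradicting $-K_X^3>0$. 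The hard part is really this enumeration --- one must carefully rule out near-miss packings such as $(1,2)+(1,5)\to(2,7)$ (rejected because $(2,7)$ unpacks to $(1,3)+(1,4)$, producing a stray $(1,4)$) and $(1,3)+(1,5)\to(2,8)$ (rejected because $(2,8)$ is not a coprime basket pair). This contradiction proves $l_0\leq 6$ and finishes the proof.
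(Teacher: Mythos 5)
Your proposal is correct and takes essentially the same route as the paper: apply Theorems~\ref{k1} and~\ref{k2} with $m=1$ to force $P_{-l}=l+1$ for $l\leq 6$, then use the formulas of Subsection~\ref{cc method} to pin down $B^{(0)}=B^{(5)}=\{(1,2),(1,3),(1,5)\}$ and derive $-K^3\leq 0$, a contradiction. The only (harmless) deviation is at the end, where you also carry the candidate $B=\{(2,5),(1,5)\}$ --- which is in fact already excluded by $\epsilon_5=0$, since $2/5\in S^{(5)}$ would survive in $B^{(5)}$ --- and eliminate it via $-K^3=0$, whereas the paper concludes $B=B^{(5)}$ directly and gets $-K^3=-1/30<0$.
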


Theorem \ref{p2} is optimal due to the following example. 
\begin{example}[{\cite[List 16.6, No.88]{Fletcher}}]
Consider the general weighted hypersurface $X_{42}\subset\mathbb{P}(1^2,6,14,21)$,  which is a $\mathbb{Q}$-Fano $3$-fold with $P_{-1}=2$. Then $\dim\overline{\varphi_{-6}(X_{42})}>1$ while $\dim\overline{\varphi_{-5}(X_{42})}=1$. So $\delta_1(X_{42})=6$. 
\end{example}

\begin{proof}[Proof of Theorem \ref{p2}]
Since $P_{-1}>0$, it is sufficient to prove that there exists an integer $m\leq 6$ such that $\dim\overline{\varphi_{-m}(X)}>1$.

Assume, to the contrary, that $\delta_1(X)>6$. Then, by applying Theorems \ref{k1} and  \ref{k2} to the case $m=1$, we have 
$$
P_{-1}=2, P_{-2}=3,P_{-3}=4,P_{-4}=5,P_{-5}=6,P_{-6}=7. 
$$
Now  by those formulae in Subsection \ref{cc method}, we have $n_{1,2}^0=1$, $n_{1,3}^0=1$, $n_{1,4}^0=\epsilon_5=1-\sigma_5$, and $0=\epsilon_6=1-\epsilon$. Hence $\epsilon=1$, and this implies $\sigma_5=n_{1,5}^0=1$. Hence the basket $B^{(5)}=B^{(0)}=\{(1,2),(1,3),(1,5)\}$ by $\epsilon_5=0$. Since $B^{(5)}$ admits no prime packings, $B=B^{(5)}$ and $-K_X^3=-K^3({\mathbb B}(X))=-1/30<0$, a contradiction. 
\end{proof}

\subsection{$\mathbb{Q}$-Fano $3$-folds with  $h^0(-K)=1$}\label{P=1}\ 

We are going to prove the following theorem. 

\begin{thm}\label{p1}
Let $X$ be a $\mathbb{Q}$-Fano $3$-fold with $P_{-1}=1$. Then, for any integer $m\geq 9$, $\dim\overline{\varphi_{-m}(X)}>1$. In particular, $\delta_1(X)\leq 9$. 
\end{thm}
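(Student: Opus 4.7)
The plan is to imitate the proof of Theorem \ref{p2}, applying Theorem \ref{k2} with $m=1$ (now in Case (i) since $P_{-1}=1$) and then using the Chen--Chen weighted-basket machinery of Subsection \ref{cc method} to exclude the resulting numerical possibilities. The case analysis is larger than in Theorem \ref{p2} because $P_{-1}=1$ provides much weaker numerical information, but the template is identical.

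First I would assume, for contradiction, that $\delta_1(X)\geq 10$. Since condition (1) of Theorem \ref{k1} is trivially satisfied for $m=1$ (as $1\equiv 1\pmod r$ for every $r$), Theorem \ref{k1} applies in Case (I) and produces a prime divisor $E$ with $-K_X\sim E$. Setting $n_0:=\min\{n\in\ZZ^+\mid P_{-n}\geq 2\}$ and taking $l_0$ as in Theorem \ref{k2}(i), we have $\delta_1(X)\leq l_0$, so the hypothesis forces $l_0\geq 10$. The minimality of $l_0$ combined with the subadditivity $P_{-m-n}\geq P_{-m}+P_{-n}-1$ of (\ref{2m}) then pins down $P_{-m}$ for $1\leq m\leq 9$ as a function of $n_0$ alone; for instance when $n_0=2$ one gets $(P_{-1},\ldots,P_{-9})=(1,2,2,3,3,4,4,5,5)$, and similar determinations hold in each of the cases $n_0=3,\ldots,9$.

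Next I would perform a case analysis on $n_0\in\{2,3,\ldots,9\}$, together with the residual case $n_0\geq 10$ in which $P_{-m}=1$ for every $m\leq 9$. For each value of $n_0$ the prescribed tuple $(P_{-1},\ldots,P_{-9})$ plugs into the explicit formulas of Subsection \ref{cc method} for $B^{(0)}$, $B^{(5)}$, and $\epsilon_6,\epsilon_7,\epsilon_8$, reducing the possible weighted baskets $\mathbb{B}=(B,P_{-1})$ to a finite list parametrized by the auxiliary non-negative integers $\sigma_5$, $n_{1,5}^0$, $n_{1,6}^0$, and $n_{1,7}^0$. For each candidate I would check the geometric constraints: non-negativity of all $\epsilon_n$ and basket entries, the Kawamata inequality $\gamma(B)>0$ of (\ref{kwmt}), and the volume positivity $-K^3(\mathbb{B})>0$ of (\ref{vol}). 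By Lemma \ref{31} and the monotonicity properties of packings, it suffices to derive these contradictions on the coarsest valid ancestor $B^{(n)}$ in the canonical sequence; this is precisely the mechanism that dispatched the unique surviving basket $\{(1,2),(1,3),(1,5)\}$ in the proof of Theorem \ref{p2} via $-K^3=-1/30<0$.

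The main obstacle will be the large-$n_0$ regime, particularly $n_0\geq 10$, where the uniform data $P_{-1}=\cdots=P_{-9}=1$ barely constrain $B^{(0)}$; there one must invoke the tighter bound $\sum_i(r_i-\frac{1}{r_i})\leq 24$ from the proof of Proposition \ref{bound index}, together with the observation that $n_0\geq 10$ forces enough orbifold points to have large index, to trim the search space to a short finite list. I expect each remaining case to be killed by a direct computation of $-K^3$ or by the emergence of a negative $\epsilon_n$, but the combinatorial bookkeeping in this tail, along with a handful of borderline baskets that may demand a separate geometric argument paralleling the weighted hypersurface examples of the introduction, will be the technical heart of the proof.
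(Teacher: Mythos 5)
Your strategy is essentially the paper's: assume $\delta_1(X)\geq 10$, feed $m=1$ into Theorems \ref{k1} and \ref{k2}, note that the contradiction hypothesis pins down $P_{-l}=\lfloor l/n_0\rfloor+1$ for $1\leq l\leq 9$, and then run the Chen--Chen basket calculus ($B^{(0)}$, $B^{(5)}$, non-negativity of the $\epsilon_n$, $\gamma(B)>0$, volume positivity, inequality (\ref{2m})) to eliminate every candidate basket. The one real divergence is how the case analysis on $n_0$ is bounded: the paper simply quotes \cite[Theorem 1.1]{CC} to get $n_0\leq 8$, so your cases $n_0=9$ and $n_0\geq 10$ never arise, whereas you propose to treat them directly. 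That is feasible, and in fact much easier than you anticipate: with $P_{-1}=\cdots=P_{-8}=1$ the formulas give $\epsilon_6=2-\epsilon=0$ and $n^0_{1,4}=2-\sigma_5\geq 0$, hence $(\sigma_5,n^0_{1,5})\in\{(1,0),(2,2)\}$; the case $(2,2)$ dies by $-K^3<0$, and in the case $(1,0)$ the expression for $\epsilon_8$ equals $-1+2n^0_{1,6}+n^0_{1,7}$, so $\epsilon_8\geq 0$ forces the residual index $s'\in\{6,7\}$, which then violates (\ref{vol}) via the minimal packed baskets exactly as in the paper---no ``large index'' forcing argument is needed, so the tail you flag as the technical heart is a non-issue (the price is redoing a sliver of what the citation to \cite{CC} buys). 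One caution: your stated checklist (non-negativity of $\epsilon_n$ and basket entries, $\gamma>0$, $-K^3>0$, plus (\ref{2m})) is not quite enough in the borderline case $n_0=8$, where the basket $\{(1,2),(2,5),(1,3),(1,4),(1,s')\}$ with $s'\in\{9,10,11\}$ passes all of those tests; the paper kills it by computing $P_{-9}=3$ from Reid's formula and contradicting the prescribed value $P_{-9}=2$, i.e.\ by checking the candidate basket's anti-plurigenera against the whole tuple rather than by any separate geometric argument, so make sure that consistency check is explicitly part of your elimination step.
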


This result is optimal as well due to the following example. 
\begin{example}[{\cite[List 16.7, No.85]{Fletcher}}]
Consider the general codimension $2$ weighted complete intersection $X=X_{24,30}\subset\mathbb{P}(1,8,9,10,12,15)$ which is a $\mathbb{Q}$-Fano $3$-fold with $P_{-1}=1$. Then $\dim\overline{\varphi_{-9}(X)}>1$ and $\dim\overline{\varphi_{-8}(X)}=1$ since $P_{-8}=2$. So $\delta_1(X)=9$. 
\end{example}

\begin{proof}[Proof of Theorem \ref{p1}]
Since $P_{-1}>0$, it is sufficient to prove that there exists an integer $m\leq 9$ such that $\dim\overline{\varphi_{-m}(X)}>1$. Assume, to the contrary,  that $\delta_1(X)>l$ for some integer $l\leq 9$.  We will deduce a contradiction. 

Applying Theorems \ref{k1} and \ref{k2} to the case $m=1$, we discuss on the number $n_0$ (defined in Theorem \ref{k2}). By Chen--Chen \cite[Theorem 1.1]{CC}, we have $n_0\leq 8$. 

If $n_0=2$ and set $l=6$, then Theorem \ref{k2}(i)($m=1$) implies that
$$
P_{-1}=1,\ P_{-2}=P_{-3}=2,\ P_{-4}=P_{-5}=3,\ P_{-6}=4. 
$$
Then  $n_{1,2}^0=5$, $n_{1,3}^0=1$, $n_{1,4}^0=\epsilon_5=1-\sigma_5$, $0=\epsilon_6=1-\epsilon$.  Hence $\epsilon=1$, and this implies $\sigma_5=n_{1,5}^0=1$. Hence the basket $B^{(5)}=B^{(0)}=\{5\times (1,2),(1,3),(1,5)\}$ by $\epsilon_5=0$. Since $B^{(5)}$ admits no further prime packings, $B=B^{(5)}$ and $-K^3({\mathbb B})=-\frac{1}{30}<0$, a contradiction. Thus $\delta_1(X)\leq 6$.   

If $n_0=3$ and set $l=6$, then Theorem \ref{k2}(i)($m=1$) implies that
$$
P_{-1}=P_{-2}=1,\ P_{-3}=P_{-4}=P_{-5}=2,\ P_{-6}=3. 
$$
Then  $n_{1,2}^0=1$, $n_{1,3}^0=4$, $n_{1,4}^0=\epsilon_5=1-\sigma_5$, $0=\epsilon_6=1-\epsilon$.  Hence $\epsilon=1$, and this implies $\sigma_5=n_{1,5}^0=1$. Hence the basket $B^{(5)}=B^{(0)}=\{ (1,2),4\times(1,3),(1,5)\}$ by $\epsilon_5=0$. Since $B^{(5)}$ admits no further prime packings, $B=B^{(5)}$ and $-K^3({\mathbb B})=-\frac{1}{30}<0$, a contradiction. Thus $\delta_1(X)\leq 6$. 

If $n_0=4$ and set $l=6$, then Theorem \ref{k2}(i)($m=1$) implies that
$$
P_{-1}=P_{-2}=P_{-3}=1,P_{-4}=P_{-5}=P_{-6}=2. 
$$
Then  $n_{1,2}^0=2$, $n_{1,3}^0=1$, $n_{1,4}^0=3-\sigma_5$, $\epsilon_5=1-\sigma_5$, $0=\epsilon_6=1-\epsilon$.  Hence $\epsilon=1$, and this implies $\sigma_5=n_{1,5}^0=1$. Hence $B^{(5)}=\{2\times(1,2), (1,3), 2\times(1,4), (1,5)\}$ by $\epsilon_5=0$.  Hence $\epsilon_7\leq 1$ and $\epsilon_8=0$ by considering possible prime packings of $B^{(5)}$. On the other hand,  $\epsilon_7= P_{-7}-1$ and $\epsilon_8=P_{-8}-P_{-7}$. So $P_{-8}=\epsilon_7 +1 \leq 2$. But this contradicts $P_{-4}=2$ and inequality (\ref{2m}). So $\delta_1(X)\leq 6$. 

If $n_0=5$ and set $l=7$, then Theorem \ref{k2}(i)($m=1$) implies that
$$
P_{-1}=P_{-2}=P_{-3}=P_{-4}=1,\ P_{-5}=P_{-6}=P_{-7}=2. 
$$
Then  $n_{1,2}^0=2$, $n_{1,3}^0=2$, $n_{1,4}^0=2-\sigma_5$, $\epsilon_5=3-\sigma_5$, $0=\epsilon_6=2-\epsilon$, $\epsilon_7=1-2\sigma_5+2n_{1,5}^0+n_{1,6}^0$.  Hence $\epsilon=2$, and this implies $(\sigma_5,n_{1,5}^0)=(1, 0)$ or $(2,2)$. If $(\sigma_5,n_{1,5}^0)=(1, 0)$, then $n_{1,6}^0=1$ by $\epsilon_7\geq 0$. Hence $\epsilon_5=2$ and $B^{(5)}=\{ 2\times(2,5), (1,4),(1,6)\}$. Since $B^{(5)}$ admits no further prime packings, $B=B^{(5)}$ and $-K^3({\mathbb B})=-\frac{1}{60}<0$, a contradiction.  If $(\sigma_5,n_{1,5}^0)=(2, 2)$, then $\epsilon_5=1$, $\epsilon_7=1$, and $B^{(7)}=\{ (3,7), (1,3),2\times(1,5)\}$. Since $B^{(7)}$ admits no further prime packings, $B=B^{(7)}$ and $-K^3(\mathbb{B})=-\frac{2}{105}<0$, a contradiction. So $\delta_1(X)\leq 7$.   

If $n_0=6$ and set $l=8$, then Theorem \ref{k2}(i)($m=1$) implies that
$$
P_{-1}=P_{-2}=P_{-3}=P_{-4}=P_{-5}=1, \ P_{-6}=P_{-7}=P_{-8}=2. 
$$
Then  $n_{1,2}^0=2$, $n_{1,3}^0=2$, $n_{1,4}^0=2-\sigma_5$, $\epsilon_5=2-\sigma_5$, $0=\epsilon_6=3-\epsilon$.  Hence $\epsilon=3$ and $\sigma_5\leq 2$, and this implies $(\sigma_5,n_{1,5}^0)=(2,1)$. Then $\epsilon_5=0$ and $B^{(5)}=\{ 2\times(1,2), 2\times(1,3), (1,5),(1,s')\}$ for some $s'\geq 6$.  This implies $\epsilon_7=\epsilon_8=0$ since there are no further packings. On the other hand, $\epsilon_7=2-2\sigma_5+2n_{1,5}^0+n_{1,6}^0$ and $\epsilon_8=2-3\sigma_5+3n_{1,5}^0+2n_{1,6}^0+n_{1,7}^0$. Hence $n_{1,6}^0=0$, $n_{1,7}^0=1$, and $B^{(7)} =\{ 2\times(1,2), 2\times(1,3), (1,5),(1,7)\}$. Since $B^{(7)}$ is minimal, $B=B^{(7)}$ and $-K^3(\mathbb{B})=-\frac{1}{105}<0$, a contradiction. Thus $\delta_1(X)\leq 8$. 

If $n_0\geq 7$ and set $l=9$, then Theorem \ref{k2}(i)($m=1$) implies that
$$
P_{-1}=P_{-2}=P_{-3}=P_{-4}=P_{-5}=P_{-6}=1,\  P_{-8}=P_{-9}=2. 
$$
Then  $n_{1,2}^0=2$, $n_{1,3}^0=2$, $n_{1,4}^0=2-\sigma_5$, $\epsilon_5=2-\sigma_5$, $0=\epsilon_6=2-\epsilon$.  Hence $\epsilon=2$ and $\sigma_5\leq 2$, and this implies $(\sigma_5,n_{1,5}^0)=(1, 0)$ or $(2,2)$. If $(\sigma_5,n_{1,5}^0)=(2, 2)$, then $B^{(5)}=\{ 2\times(1,2), 2\times(1,3),2\times(1,5)\}$ by $\epsilon_5=0$.  Since $B^{(5)}$ admits no further prime packings, $B=B^{(5)}$ and $-K^3(\mathbb{B})<0$, a contradiction.  

Thus we are left to consider the case: $(\sigma_5,n_{1,5}^0)=(1, 0)$. Then we have $B^{(5)}=\{ (1,2), (2,5), (1,3), (1,4),(1,s')\}$ with $s' \ge 6$ by $\epsilon_5=1$.
Assume that $s'=6$, $7$. Clearly any basket $B$, with such a given $B^{(5)}$, dominates one of the following minimal ones: 
\begin{align*}
{}&B_1=\{(3,7), (2,7), (1,s')\};\\
{}&B_2=\{(1,2), (3,8), (1,4), (1,s')\}.
\end{align*}
Since $\sigma'(B)\geq \sigma'(B_i)\geq 2$ where $s'=6, 7$ and
$i=1,2$, inequality (\ref{vol}) fails for all $B$, which says that this
case does not happen. Hence $s' \ge 8$, then the expression of  $\epsilon_8$ gives
$$P_{-8}-P_{-7}=\epsilon_8+1.$$
Hence $P_{-7} = P_{-6} =1$ and $\epsilon_7=\epsilon_8=0$ since $P_{-8}=2$.
We have $B^{(8)}=B^{(5)}=\{ (1,2), (2,5), (1,3),
(1,4),(1,s')\}$ with $s'\geq 8$. Since $B^{(8)}$ admits no further prime packings, $B=B^{(8)}$.  By inequalities (\ref{kwmt}) and (\ref{vol}), $s'$ can only be $9,10,11$.
But then direct calculations show that $P_{-9}=3$ in all these three cases, a contradiction. We have proved $\delta_1(X)\leq 9$. 

So we conclude the theorem.
\end{proof}

\subsection{$\mathbb{Q}$-Fano $3$-folds with  $h^0(-K)=0$}\label{P=0}\

In this subsection we prove the following theorem.

\begin{thm}\label{p0}
Let $X$ be a $\mathbb{Q}$-Fano $3$-fold with $P_{-1}=0$. Then there exists an integer $m_1\leq 11$ such that $\dim\overline{\varphi_{-m_1}(X)}>1$.  Moreover, we can take such a number $m_1\leq 8$ except for the following baskets of singularities:
{\small $$\begin{array}{ll}
\rm{No.1.} &\{2 \times (1,2), 3 \times (2,5),(1,3),(1,4)\};\\
\rm{No.2.} &\{5 \times (1,2), 2 \times (1,3),(2,7),(1,4)\} ;\\
\rm{No.3.}&\{5 \times (1,2), 2 \times (1,3),(3,11)\} ;\\
\rm{No.4.}&  \{5 \times (1,2), (1,3),(3,10),(1,4)\};\\
\rm{No.A.}&  \{7\times (1,2), (3,7), (1,5) \};\\
\rm{No.B.}&  \{6\times (1,2), (4,9), (1,5) \};\\
\rm{No.C.}&  \{5\times (1,2), (5,11), (1,5) \};\\
\rm{No.D.}&  \{4\times (1,2), (6,13), (1,5) \};\\
\rm{No.E.}&  \{7\times (1,2), (3,8), (1,5) \};\\
\rm{No.F.}&  \{5\times (1,2), (4,9), (1,3), (1,5) \}.
\end{array}$$}
\end{thm}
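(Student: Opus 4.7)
The plan is to extend the case analyses of Theorems \ref{p2} and \ref{p1} to the situation $P_{-1}=0$, with the complication that one can no longer invoke Theorems \ref{k1} and \ref{k2} directly at $m=1$, since the hypothesis $P_{-m}\geq 1$ fails there. The starting integer must therefore be chosen according to the structure of the basket.

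First I would let $m'=m'(X)$ denote the smallest positive integer with $P_{-m'}\geq 1$. The Chen--Chen machinery and the bound of \cite[Theorem 1.1]{CC} imply that $m'$ is bounded by a small constant, and the weighted-basket formulas recalled in Subsection \ref{cc method} express $m'$ and $P_{-m'}$ in terms of $\mathbb{B}=(B,0)$. If $P_{-m'}\geq 3$ and the residue conditions (1)--(5) of Theorem \ref{k1} are all satisfied for $m'$ against every pair $(b_i,r_i)\in B$, then conclusion (III) of Theorem \ref{k1} forces $\delta_1(X)\leq m'$. Otherwise $P_{-m'}\in\{1,2\}$, and one invokes Theorem \ref{k2} with this $m'$: setting $n_0:=\min\{n\mid P_{-nm'}\geq 2\}$ produces an integer $l_0$ so that $|-l_0m'K_X|$ is not composed with a pencil. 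Contrapositively, an assumption $\delta_1(X)>l$ pins down the exact values of $P_{-km'}$ for all $k\leq l/m'$.

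Next, case-splitting on $m'$ and $n_0$ and using the Chen--Chen formulas
\begin{align*}
n_{1,2}^0&=5-6\tp_{-1}+4\tp_{-2}-\tp_{-3},\\
n_{1,3}^0&=4-2\tp_{-1}-2\tp_{-2}+3\tp_{-3}-\tp_{-4},
\end{align*}
together with the expressions for $\sigma_5,\epsilon_5,\epsilon_6,\epsilon_7,\epsilon_8$, specialized to $\tp_{-1}=0$, I would compute $B^{(0)},B^{(5)},B^{(7)},B^{(8)}$ in each sub-case, exactly as in the proofs of Theorems \ref{p2} and \ref{p1}. Each candidate basket is then tested against the Kawamata--Reid inequality (\ref{kwmt}), the volume positivity (\ref{vol}), and the sub-additivity (\ref{2m}). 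Under the strengthened assumption $\delta_1(X)>8$, most candidates are eliminated, and the residual list of survivors is precisely the ten exceptional baskets No.1--No.4 and No.A--No.F.

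Finally, for each exceptional basket I would tabulate $P_{-k}$ for $k\leq 11$ directly via Reid's formula and exhibit a specific $m_1\in\{9,10,11\}$ that (a) satisfies the residue conditions of Theorem \ref{k1} against every pair in that basket and (b) makes $P_{-m_1}\geq 3$; conclusion (III) of Theorem \ref{k1} then yields $\dim\overline{\varphi_{-m_1}(X)}>1$, so $\delta_1(X)\leq 11$. The main obstacle is bookkeeping: $P_{-1}=0$ permits a wider range of $m'$ and $n_0$ and hence generates substantially more candidate weighted baskets than in Theorems \ref{p2} and \ref{p1}; moreover, the ten exceptional baskets survive the $l=8$ inequalities, so each requires a separate arithmetic verification that some $m_1\leq 11$ simultaneously meets the residue conditions of Theorem \ref{k1} and delivers at least three anti-pluri-sections.
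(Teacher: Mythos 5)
Your toolkit is the right one (Theorems \ref{k1} and \ref{k2}, the Chen--Chen basket calculus, Reid's formula), but the reduction you build on it has a genuine gap. You propose to start from the minimal $m'$ with $P_{-m'}\geq 1$ and, when $P_{-m'}\in\{1,2\}$, to ``invoke Theorem \ref{k2} with this $m'$''. Theorem \ref{k2} is not applicable on the strength of $P_{-m'}\in\{1,2\}$ alone: its hypotheses (i)/(ii) --- that $-m'K_X$ is a \emph{prime} divisor, resp.\ that $|-m'K_X|$ has \emph{no fixed part} --- are precisely the conclusions of Theorem \ref{k1}, and Theorem \ref{k1} requires the residue conditions (1)--(5) for $m'$ against every pair of the (unknown) basket. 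When $P_{-1}>0$ this is harmless because $m=1$ satisfies condition (1) vacuously, which is why the proofs of Theorems \ref{p2} and \ref{p1} can start at $m=1$; with $P_{-1}=0$ there is no universal starting value. Concretely, for the baskets No.A--No.D one has $m'=2$, but $2\equiv 2 \pmod 5$ and $3b=3<5$ for the pair $(1,5)$, so condition (3) fails and Theorem \ref{k1} says nothing about $-2K_X$; indeed the analysis in Theorem \ref{ex}(ii) must allow $-2K_X\sim E_1+E_2$ to be non-prime, so hypothesis (i) of Theorem \ref{k2} cannot simply be assumed at $m'$. The same happens for baskets containing $(2,9)$ with $m'=2$ (e.g.\ $\{9\times(1,2),(2,9)\}$, which the paper handles with $m=8$). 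Consequently your bootstrap ``assume $\delta_1(X)>8$, pin down the $P_{-km'}$, recover the basket'' does not get off the ground for exactly the baskets that matter, and the central claim of that step --- that the survivors of the $\delta_1>8$ elimination are precisely the ten listed baskets --- is asserted rather than proved. It is also not the right characterization of the exceptional list: No.E and No.F satisfy $\delta_1(X)\leq 6$ by Theorem \ref{ex}(iii); they appear in the list because the numerical method cannot certify $m_1\leq 8$ for them, not because they survive a $\delta_1>8$ hypothesis.

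The paper's proof is organized differently to avoid this circularity, and you would need the same (or an equivalent) structure. It first splits on $P_{-2}$. For $P_{-1}=P_{-2}=0$ it quotes the complete classification of such geometric baskets (Proposition \ref{list}, i.e.\ \cite[Theorem 3.5]{CC}), twenty-three explicit baskets, which your sketch does not mention and would otherwise have to reprove. For $P_{-2}>0$ it enumerates the possible baskets using only $P_{-1}=0$, the non-negativity of $n^0_{1,3}$ and $n^0_{1,4}$, $\gamma(B)>0$ and $-K^3>0$ (no assumption on $\delta_1$ enters), obtaining $\sigma_5\leq 2$ and then a finite list. Only afterwards, for each candidate basket, is an $m$ chosen so that the residue conditions of Theorem \ref{k1} are verified --- often much larger than your $m'$ ($m=8$ for baskets containing $(2,9)$, $m=11$ for No.2 and No.4, $m=9$ or $11$ for No.A--No.F) --- and Theorems \ref{k1} and \ref{k2} then produce $m_1$. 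Your final step, exhibiting for each exceptional basket some $m_1\in\{9,10,11\}$ with verified residue conditions and $P_{-m_1}\geq 3$, is fine as far as it goes, but it is the preceding enumeration that carries the content of the theorem, and as written your plan does not deliver it.
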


\begin{remark}\label{optimal remark}
We do not know if this result is optimal since very few examples with $P_{-1}=0$ are known. There are $4$ known examples due to Iano-Fletcher \cite[List 16.7, No.60]{Fletcher} and Alt\i nok--Reid \cite{AR}, \cite[Example 9.14]{Reid}. For these examples we can see that  $\dim\overline{\varphi_{-8}(X)}>1$ by our theorem. Moreover, in next subsection we will treat the exceptional cases. If one can confirm either the existence or non-existence of type No.1--No.4, the result becomes optimal and so does Theorem \ref{main1}. 
\end{remark}

Before proving Theorem \ref{p0}, we recall a result by J. A. Chen and the first author.  

\begin{prop}[{\cite[Theorem 3.5]{CC}}]\label{list} 
Any geometric basket of weak $\bQ$-Fano 3-folds with $P_{-1}=P_{-2}=0$ is
among the following list:
 {\tiny
$$ \begin{array}{llccccccc}
&B & -K^3 & P_{-3} & P_{-4} & P_{-5} & P_{-6} & P_{-7} &P_{-8} \\
\hline\\
\rm{No.1.}& \{2 \times (1,2), 3 \times (2,5),(1,3),(1,4)\} & 1/60 & 0 & 0 & 1 & 1 & 1 &2\\
\rm{No.2.}& \{5 \times (1,2), 2 \times (1,3),(2,7),(1,4)\} & 1/84 & 0 & 1 & 0 & 1 & 1&2 \\
\rm{No.3.}& \{5 \times (1,2), 2 \times (1,3),(3,11)\} & 1/66 & 0 & 1 & 0 & 1 & 1 &2\\
\rm{No.4.}& \{5 \times (1,2), (1,3),(3,10),(1,4)\} & 1/60 & 0 & 1 & 0 & 1 & 1 &2\\
\rm{No.5.}& \{5 \times (1,2),  (1,3),2 \times (2,7)\} & 1/42 & 0 & 1 & 0 & 1 & 2&3 \\
\rm{No.6.}& \{4 \times (1,2),(2,5), 2 \times (1,3),2 \times (1,4)\} & 1/30 & 0 & 1 & 1 & 2 & 2 &4\\
\rm{No.7.}& \{3 \times (1,2),(2,5), 5 \times (1,3)\} & 1/30 & 1 & 1 & 1 & 3 & 3 &4\\
\rm{No.8.}& \{2 \times (1,2),(3,7), 5 \times (1,3)\} & 1/21 & 1 & 1 & 1 & 3 & 4 &5\\
\rm{No.9.}& \{(1,2),(4,9), 5 \times (1,3)\} & 1/18 & 1 & 1 & 1 & 3 & 4 &5\\
\rm{No.10.}& \{3 \times (1,2),(3,8), 4 \times (1,3)\} & 1/24 & 1 & 1 & 1 & 3 & 3&5 \\
\rm{No.11.}& \{3 \times (1,2),(4,11), 3 \times (1,3)\} & 1/22 & 1 & 1 & 1 & 3 & 3 &5\\
\rm{No.12.}& \{3 \times (1,2),(5,14), 2 \times (1,3)\} & 1/21 & 1 & 1 & 1 & 3 & 3 &5\\
\rm{No.13.}&\{2 \times (1,2),2 \times (2,5),4 \times (1,3)\} & 1/15 & 1 & 1 & 2 & 4 & 5 &7\\
\rm{No.14.}& \{ (1,2),(3,7),  (2,5),4 \times (1,3)\} & 17/210 & 1 & 1 & 2 & 4 & 6 &8\\
\rm{No.15.}& \{2 \times (1,2), (2,5),(3,8),3 \times (1,3)\} & 3/40 & 1 & 1 & 2 & 4 & 5&8 \\
\rm{No.16.}& \{2 \times (1,2),(5,13),3 \times (1,3)\} & 1/13 & 1 & 1 & 2 & 4 & 5 &8\\
\rm{No.17.}&\{ (1,2),3 \times (2,5),3 \times (1,3)\} & 1/10 & 1 & 1 & 3 & 5 & 7 &10\\
\rm{No.18.}& \{4 \times (1,2), 5 \times (1,3),(1,4)\} & 1/12 & 1 & 2 & 2 & 5 & 6 &9\\
\rm{No.19.}&\{4 \times (1,2), 4 \times (1,3),(2,7)\} & 2/21 & 1 & 2 & 2 & 5 & 7 &10\\
\rm{No.20.}& \{4 \times (1,2), 3 \times (1,3),(3,10)\} & 1/10 & 1 & 2 & 2 & 5 & 7 &10\\
\rm{No.21.}& \{3 \times (1,2),(2,5), 4\times (1,3),(1,4)\} & 7/60 & 1 & 2 & 3 & 6 & 8 &12\\
\rm{No.22.}& \{3 \times (1,2), 7 \times (1,3)\} & 1/6 & 2 & 3 & 4 & 9 & 12 &17 \\
\rm{No.23.}& \{2 \times (1,2),(2,5), 6 \times (1,3)\} & 1/5 & 2 & 3 & 5 & 10 & 14 &20\\
\end{array} $$}
\end{prop}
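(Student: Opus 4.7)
The plan is to apply the weighted-basket machinery of Subsection \ref{cc method} under the specialization $\tilde{P}_{-1}=\tilde{P}_{-2}=0$ and to run an exhaustive case analysis. Substituting into the basic formulas gives $\sigma(B)=10$ and $-K^3(\mathbb{B})+\sigma'(B)=4$; since $X$ is weak $\bQ$-Fano, $-K_X^3>0$ translates into the volume cut-off $\sigma'(B)<4$. This is the single most restrictive numerical constraint in what follows, because as one packs $B^{(0)} \succeq B^{(5)} \succeq \cdots \succeq B$, the quantity $\sigma'(B)$ strictly increases along every prime packing and quickly threatens to exceed $4$.

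From the closed formulas in Subsection \ref{cc method} I would next write out the canonical sequence data as
\begin{align*}
n^0_{1,2}&=5-P_{-3}, & n^0_{1,3}&=4+3P_{-3}-P_{-4},\\
n^0_{1,4}&=1-2P_{-3}+P_{-4}-\sigma_5, & \sigma_5&=\sum_{r\geq 5}n^0_{1,r},
\end{align*}
together with the analogous expressions for $\epsilon_5,\epsilon_6,\epsilon_7,\epsilon_8$, which depend on $P_{-3},\ldots,P_{-8}$ and on the tail data $n^0_{1,5},n^0_{1,6},n^0_{1,7}$. Non-negativity and integrality of these quantities, together with the plurigenus inequality (\ref{2m}), Kawamata's inequality $\gamma(B)>0$ from (\ref{kwmt}), and the volume cut-off $\sigma'(B)<4$, at once force $P_{-3}\leq 5$ and produce tight upper bounds on each subsequent $P_{-i}$.

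The enumeration then proceeds by cases on $P_{-3}$, then on $P_{-4}$, then on the tail structure of $B^{(0)}$, and finally on the choice of prime packings at each level. For every admissible tuple $(P_{-3},\ldots,P_{-8})$ I would reconstruct $B^{(5)}$ explicitly, follow the chain $B^{(5)}\succeq B^{(7)}\succeq B^{(8)}\succeq\cdots\succeq B$ using $\epsilon_n$ as the exact number of prime packings at level $n$ (from Lemma \ref{delta}), and stop at a basket admitting no further prime packing. Lemma \ref{31} is used to prune: once an intermediate basket already violates $\gamma>0$ or $\sigma'<4$, any further packing of it can be discarded as well. Each surviving minimal $B$ is then substituted back into the formulas of Subsection \ref{cc method} to compute $-K^3(\mathbb{B})$ and the columns $P_{-3},\ldots,P_{-8}$ of the resulting table.

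The main obstacle is the combinatorial explosion in the low-$P_{-3}$ regime: when $P_{-3}=0$ the basket $B^{(5)}$ contains five copies of $(1,2)$ together with several $(1,3),(1,4),(2,5),(1,r)$-points with $r\geq 5$, so the prime-packing tree branches heavily and many candidate minimal baskets must be eliminated solely by the volume cut-off. Configurations such as No.1 (containing three copies of $(2,5)$) and No.3 (containing $(3,11)$) require several successive packings to be traced before $\sigma'(B)<4$ finally forbids further merging; verifying in each of these delicate cases that exactly one minimal basket survives is where the bulk of the work lies. Once the exhaustion is carried through carefully, precisely the $23$ baskets listed in the table are those that survive, completing the classification.
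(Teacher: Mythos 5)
First, a remark on context: the paper does not prove this proposition at all --- it is imported verbatim from \cite[Theorem 3.5]{CC} --- so what you have written is an attempt to reconstruct the Chen--Chen argument. Your general framework (the canonical sequence $B^{(0)}\succeq B^{(5)}\succeq\cdots\succeq B$, the formulas for $n^0_{1,r}$ and $\epsilon_n$, the constraints $\gamma>0$, $-K^3>0$ and inequality (\ref{2m})) is the right one, and your specializations $\sigma(B)=10$, $\sigma'(B)<4$, $n^0_{1,2}=5-P_{-3}$, etc.\ are correct. But two steps of the plan are genuinely wrong. The first is the direction of monotonicity of $\sigma'$: since $\frac{b_1^2}{r_1}+\frac{b_2^2}{r_2}\geq\frac{(b_1+b_2)^2}{r_1+r_2}$, the quantity $\sigma'$ \emph{decreases} under packing and hence $-K^3=4-\sigma'$ \emph{increases} (this is exactly items (i) and (iv) of the Property in Subsection \ref{cc method}). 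Consequently the volume cut-off $\sigma'(B)<4$ imposes no condition on the intermediate baskets $B^{(n)}\succeq B$, and your rule ``once an intermediate basket violates $\sigma'<4$, any further packing of it can be discarded'' is false. This is not cosmetic: for No.~1 one has $B^{(0)}=\{5\times(1,2),4\times(1,3),(1,4)\}$ with $\sigma'(B^{(0)})=\frac{49}{12}>4$, i.e.\ $-K^3(B^{(0)})<0$, while the geometric basket itself has $-K^3=\frac{1}{60}>0$; your pruning would kill that branch and lose No.~1. Only $\gamma$ prunes forward along packings (Lemma \ref{31}); the volume constraint must be tested on the terminal candidates themselves, or contrapositively on the minimal baskets they dominate, as is done in the proof of Theorem \ref{p0}.

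The second gap is your termination rule: ``stop at a basket admitting no further prime packing'' and tabulate ``each surviving minimal $B$.'' A geometric basket is the stable value $B=B^{(N)}$ of its canonical sequence, pinned down by the full sequence of anti-plurigenera via $\epsilon_n$; it need \emph{not} be minimal in the packing order. Indeed No.~22 $=\{3\times(1,2),7\times(1,3)\}$ admits the prime packing $(1,2)+(1,3)\to(2,5)$ whose output is exactly No.~23, and No.~1 admits the prime packing $(1,3)+(1,4)\to(2,7)$; both No.~22 and No.~23 appear in the list as distinct geometric baskets, distinguished by $P_{-5}$, i.e.\ by $\epsilon_5$. Enumerating only minimal baskets would therefore return a different and much shorter list. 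The correct procedure is to enumerate \emph{all} baskets dominated by $B^{(n)}$ for the largest $n$ at which the data determine $\epsilon_n$, retain each one compatible with $\gamma>0$, $-K^3>0$ and (\ref{2m}), and recognize that distinct rows of the table may lie on a single packing chain. With these two corrections the strategy can be carried out, but as written it would not reproduce the stated list.
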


\begin{proof}[Proof of Theorem \ref{p0}]
In the proof, we will always take a suitable integer $m$ satisfying one of the conditions in Theorem \ref{k1}. If necessary, we apply Theorem \ref{k2} on $m$ and take $m_1=l_0m$.
\smallskip

{\bf Case I.} $P_{-2}=0$. 

The basket $B=B_X$ of the singularities of $X$ is among the list of Proposition \ref{list}. We just discuss it case by case. 

If $B$ is of type No.1, take $m=5$. Since $P_{-5}=1$ and $P_{-10}=4$, we can take $m_1=10$.

If $B$ is of type No.2, take $m=11$. Since  $P_{-11}=4$, we can take $m_1=11$.

If $B$ is of type No.3, take $m=10$. Since   $P_{-10}=3$, we can take $m_1=10$.

If $B$ is of type No.4, take $m=11$. Since   $P_{-11}=4$, we can take $m_1=11$.

If $B$ is of type No.5, take $m=8$. Since $P_{-8}=3$, we can take $m_1=8$.

If $B$ is of type No.6, take $m=8$. Since $P_{-8}=4$, we can take $m_1=8$.

If $B$ is of type No.7--No.21, take $m=3$. Since $P_{-3}=1$ and $P_{-6}\geq 3$, we can take $m_1=6$.

If $B$ is of type No.22--No.23, take $m=3$. Since $P_{-3}=2$ and $P_{-6}\geq 9$, we can take $m_1=6$.
\smallskip

{\bf Case II.} $P_{-2}>0$. 

Since  $P_{-1}=0$, the basket $B^{(0)}$ has datum
 \begin{numcases}{}
n_{1,2}^0=5 +4P_{-2}-P_{-3};\notag\\
n_{1,3}^0=4 -2P_{-2}+3P_{-3}-P_{-4};\notag\\
n_{1,4}^0=1 -P_{-2}-2P_{-3}+P_{-4}-\sigma_5.\notag
\end{numcases}
By Lemma \ref{31}, $B^{(0)}$ satisfies inequality (\ref{kwmt}) and thus
\begin{align*}
0{}&< \gamma(B^{(0)})= \sum_{r \ge 2}(\frac{1}{r}-r)
n^0_{1,r}+24\\
{}&\leq\sum_{r=2,3,4}(\frac{1}{r}-r)
n^0_{1,r}-\frac{24}{5}\sigma_5+24\\
{}&=\frac{25}{12}+\frac{37}{12}P_{-2}+P_{-3}-\frac{13}{12}P_{-4}-\frac{21}{20}\sigma_5.
\end{align*}
Hence,  by $n^0_{1,3}\geq 0$ and $n^0_{1,4}\geq 0$, we have
\begin{numcases}{}
\frac{25}{12}+\frac{37}{12}P_{-2}+P_{-3}-\frac{13}{12}P_{-4}-\frac{21}{20}\sigma_5>0;\label{1}\\
4 -2P_{-2}+3P_{-3}-P_{-4} \geq 0;\label{2}\\
1 -P_{-2}-2P_{-3}+P_{-4}-\sigma_5 \geq 0.\label{3}
\end{numcases}
Considering the inequality ``(\ref{1})+(\ref{2})+$2\times$(\ref{3})'':
\begin{align}\label{24}
\frac{97}{12}-\frac{11}{12}P_{-2}-\frac{1}{12}P_{-4}-\frac{61}{20}\sigma_5>0,
\end{align}
we obtain
$\sigma_5\leq 2.$
\smallskip

{\bf Subcase II-1.} $\sigma_5=0.$

At first, we consider the case $P_{-3}=0$. By inequality (\ref{2}), we have $2P_{-2}+ P_{-4} \leq 4$. Since $1\leq P_{-2}\leq P_{-4}$, it follows that $(P_{-2},P_{-4})=(1,1)$ or $(1,2)$. If $(P_{-2},P_{-4})=(1,1)$, then $B^{(0)}=\{9\times(1,2),(1,3),(1,4)\}$ with
$-K^3(B^{(0)}) =-\frac{1}{12}<0$. By considering a minimal basket
$B_{\min}$ dominated by $B^{(0)}$, then either
$B_{\min}=\{(10,21),(1,4)\}$ with $-K^3(B_{\min})=-\frac{1}{84}<0$ or
$B_{\min}=\{ 9 \times (1,2), (2,7)\}$ with $-K^3(B_{\min})=-\frac{1}{14}<0$.
Thus $-K^3(B)\leq -K^3(B_{\min})<0$, a contradiction.
If $(P_{-2},P_{-4})=(1,2)$, then $B^{(0)}=\{9 \times (1,2), 2\times (1,4)\}$. Since $B^{(0)}$ admits no prime packings anymore, $B=B^{(0)}$ and $-K^3(B)=0$, a contradiction.

Let us consider the case $P_{-3}\geq 1$. Since $\sigma_5=0$, $B^{(0)}$ is composed of $(1,2),(1,3),(1,4)$. In particular, $4b\geq r$ holds for every pair $(b, r)\in B^{(0)}$. As an easy conclusion, after packings, $4b\geq r$ holds for every pair $(b, r)\in B$. So $m=3$ satisfies the condition of Theorem \ref{k1}. By Theorem \ref{k2}, we can take $m_1=3$ or $6$ unless $(P_{-3},P_{-6})=(1,1),(1,2),(2,3)$. By  inequality  (\ref{3}),
\begin{align}\label{4323}
P_{-4}\geq 2P_{-3}+P_{-2}-1\geq 2P_{-3}.
\end{align}
By $P_{-2}>0$, $P_{-6}\geq  P_{-4}$. Thus we only need to consider the case $(P_{-3},P_{-6})= (1,2)$. By  inequality  (\ref{4323}), $P_{-2}=1$ and $P_{-4}=2$. On the other hand,
$$0=\epsilon_6=3P_{-1}+P_{-2}-P_{-3}-P_{-4}-P_{-5}+P_{-6}-\epsilon=-P_{-5}.$$
This implies $P_{-5}=0$ which contradicts $P_{-2}=P_{-3}=1$.
\smallskip

{\bf Subcase II-2.} $\sigma_5=2.$

By inequality (\ref{24}) and $P_{-4}\geq 2P_{-2}-1$, we have $P_{-2}\leq 1$. Hence $P_{-2}=1$ and, by inequalities (\ref{1})--(\ref{3}), we have inequalities:
 \begin{numcases}{}
\frac{46}{15} +P_{-3}-\frac{13}{12}P_{-4} >0;\label{4}\\
2+3P_{-3}-P_{-4} \geq 0;\label{5}\\
-2-2P_{-3}+P_{-4}  \geq 0.\label{6}
\end{numcases}
Considering the inequality ``$2\times (\ref{4})+(\ref{6})$'', we have $P_{-4}\leq 3$. Hence $P_{-3}=0$ by  inequality (\ref{6}), and $P_{-4}=2$ by inequalities  (\ref{5}) and (\ref{6}). Then $B^{(0)}=\{9 \times (1,2), (1,s_1), (1,s_2)\}$ with $5\leq s_1\leq s_2$. If $s_2>5$, then $\gamma(B^{(0)})\leq 9\times (\frac{1}{2}-2)+(\frac{1}{5}-5)+(\frac{1}{6}-6)+24<0$, a contradiction. Thus  $B^{(0)}=\{9 \times (1,2), 2\times(1,5) \}$. Since $B^{(0)}$ admits
no further prime packings, $B=B^{(0)}$. Take $m=5$. Since $P_{-5}=3$ by $\epsilon_5=0$, we can take $m_1=5$ by Theorem \ref{k2}.
\smallskip

{\bf Subcase II-3.} $\sigma_5=1.$

By inequalities (\ref{1})--(\ref{3}), we have
\begin{numcases}{}
12+37P_{-2}+12P_{-3}-13P_{-4}\geq0;\label{7}\\
4 -2P_{-2}+3P_{-3}-P_{-4} \geq 0;\label{8}\\
 -P_{-2}-2P_{-3}+P_{-4}\geq 0.\label{9}
\end{numcases}
Considering  the inequality  ``$(\ref{7})+13\times (\ref{9})$'', we have 
\begin{align}\label{3<2}
7P_{-3}\leq 12P_{-2}+6.
\end{align}
Considering the inequality ``$(\ref{8})+ (\ref{9})$'', we have 
\begin{align}\label{2<3}
3P_{-2}\leq  P_{-3}+4.
\end{align}
Inequalities (\ref{3<2}) and (\ref{2<3}) imply $P_{-2}\leq 3$.
\smallskip

{\bf Subsubcase II-3-i.} $(\sigma_5,P_{-2})=(1,3)$.

By  inequalities  (\ref{3<2}) and (\ref{2<3}), $5\leq P_{-3}\leq 6$.

If $P_{-3}=6$, by  inequalities (\ref{7}) and (\ref{9}), $P_{-4}=15$. Then $B^{(0)}=\{11\times (1,2),  (1,3), (1,s) \}$ for some integer $s\geq 5$.  By $\gamma(B^{(0)})>0$, we have $s=5$. Since the one-step packing $B_1=\{10\times (1,2),  (2,5), (1,5) \}$ has negative $\gamma(B_1)$, $B=B^{(0)}=\{11\times (1,2),  (1,3), (1,5) \}$. Take $m=4$. Since $P_{-4}=15$, we can take $m_1=4$ by Theorem \ref{k2}.

If $P_{-3}=5$, by inequalities (\ref{8}) and (\ref{9}), $P_{-4}=13$. Then $B^{(0)}=\{12\times (1,2),  (1,s) \}$ for some integer $s\geq 5$. By  $\gamma(B^{(0)})>0$, we have $s=5,6$. Clearly $B=B^{(0)}$.  Take $m=5$. Since $P_{-5}=22$, we can take $m_1=5$ by Theorem \ref{k2}.
\smallskip

{\bf Subsubcase II-3-ii.}  $(\sigma_5,P_{-2})=(1,2)$. 

By  inequalities  (\ref{3<2}) and (\ref{2<3}), $2\leq P_{-3}\leq 4$.

If $P_{-3}=4$, by  inequalities  (\ref{7}) and (\ref{9}), $P_{-4}=10$. Then $B^{(0)}=\{9\times (1,2), 2\times (1,3), (1,s) \}$ for some integer $s\geq 5$. By $\gamma(B^{(0)})>0$, we have $s=5$. Since the one-step packing  $B_1=\{8\times (1,2), (2,5), (1,3),(1,5) \}$ has negative $\gamma(B_1)$, $B=B^{(0)}=\{9\times (1,2),  2\times(1,3), (1,5) \}$. Take $m=4$. Since $P_{-4}=10$, we can take $m_1=4$ by Theorem \ref{k2}.

If $P_{-3}=3$, by inequalities (\ref{8}) and (\ref{9}), $8\leq P_{-4}\leq 9$. {}Firstly let us consider the case
$P_{-4}= 9$. Clearly $B^{(0)}=\{10\times (1,2), (1,4), (1,s) \}$ for some integer $s\geq 5$. By $\gamma(B^{(0)})>0$, we have $s=5$. If $B=B^{(0)}$, we may take $m=4$. Since $P_{-4}\geq 9$, we can take $m_1=4$ by Theorem \ref{k2}. If $B\neq B^{(0)}$, we have $B=\{10\times (1,2),(2,9) \}$.
Take $m=8$. Since $P_{-8}\geq 3$, we can take $m_1=8$ by Theorem \ref{k2}.
Now we consider the case $P_{-4}= 8$. We have $B^{(0)}=\{10\times (1,2), (1,3), (1,s) \}$ for some integer $s\geq 5$. Since $\gamma(B^{(0)})>0$, we have  $5\leq s\leq 6$. 
For the case $(P_{-4}, s)=(8,6)$, we get  $B =\{10\times (1,2), (1,3), (1,6) \}$ since any possible packing of $B^{(0)}$ has negative $\gamma$. Take $m=5$. Since $P_{-5}=13$, we can take $m_1=5$ by Theorem \ref{k2}.
For the case $(P_{-4}, s)=(8,5)$, we get either 
$B =\{10\times (1,2), (1,3), (1,5) \}$ or $B=\{9\times (1,2),   (2,5), (1,5) \}$ or $B=\{8\times (1,2),  (3,7), (1,5)\}$ by $\gamma>0$.
For all these cases, take $m=6$. Since $P_{-6}\geq 3$, we can take $m_1=6$ by Theorem \ref{k2}.

If $P_{-3}=2$, we have $P_{-4}=6$ by  inequalities  (\ref{8}) and (\ref{9}). Then $B^{(0)}=\{11\times (1,2), (1,s) \}$ for some integer $s\geq 5$. Similarly, $\gamma(B^{(0)})>0$ implies $5\leq s\leq 7$. Since $B^{(0)}$ admits
no further prime packings, $B=B^{(0)}$. Take $m=6$. Since $P_{-6}\geq 3$, we can take $m_1=6$ by Theorem \ref{k2}.
\smallskip

{\bf Subsubcase II-3-iii.} $(\sigma_5,P_{-2})=(1,1)$.

By inequality (\ref{3<2}), $P_{-3}\leq 2$.

If $P_{-3}=2$, we have $P_{-4}=5$ by inequalities (\ref{7}) and (\ref{9}). Then $B^{(0)}=\{7\times (1,2), 3\times(1,3),(1,s) \}$ for some integer $s\geq 5$. Similarly, $\gamma(B^{(0)})>0$ implies $s=5$. Furthermore, we have either 
$B =\{7\times (1,2), 3\times(1,3), (1,5) \}$ or $B=\{6\times (1,2),(2,5), 2\times(1,3), (1,5) \}$ by $\gamma>0$. 
Take $m=4$. Since $P_{-4}=5$, we can take $m_1=4$ by Theorem \ref{k2}.

If $P_{-3}=1$, we have $3\leq P_{-4}\leq 4$ by  inequalities  (\ref{7}) and (\ref{9}). Consider the case $(P_{-3},P_{-4})= (1,4)$. We  have $$B^{(0)}=\{8\times (1,2),(1,3), (1,4), (1,s) \}$$ for some integer $s\geq 5$. Again we have $s=5$ since $\gamma(B^{(0)})>0$. With the property $\gamma>0$ and considering all possible baskets with $B^{(0)}$, we see that $B$ must be one of the following baskets:
\begin{align*}
B_1={}&\{8\times (1,2),   (1,3), (1,4), (1,5) \},\\ 
B_2={}&\{8\times (1,2),   (2,7), (1,5)\},\\
B_3={}&\{8\times (1,2), (1,3), (2,9) \},\\
B_4={}&\{7\times (1,2),   (2,5), (1,4), (1,5)\}. 
\end{align*}
For  $B_2$, take $m=6$. Since $P_{-6}(B_2)\geq 3$, we can take $m_1=6$ by Theorem \ref{k2}.
For $B_3$, take $m=8$. Since $P_{-8}(B_3)\geq 3$, we can take $m_1=8$ by Theorem \ref{k2}.
For $B_1$ and $B_4$, take $m=4$. Similarly we can take $m_1=4$ by Theorem \ref{k2}. Consider the case $(P_{-3}, P_{-4})=(1,3)$. We have $B^{(0)}=\{8\times (1,2),   2\times (1,3), (1,s) \}$ for some integer $s\geq 5$. Similarly, 
$\gamma(B^{(0)})>0$ implies $5\leq s\leq 6$. 
If $s=6$, we see either $B =\{8\times (1,2),   2\times (1,3), (1,6) \}$ or $B=\{7\times (1,2), (2,5), (1,3), (1,6) \}$ since $\gamma(B)>0$. Take $m=5$. Since $P_{-5}\geq 3$, we can take $m_1=5$ by Theorem \ref{k2}.
If $s=5$, by considering all possible packings dominated by $B^{(0)}$ and using the property $\gamma>0$, we see that $B$ must be one of the following baskets:
\begin{align*}
B_i ={}&\{8\times (1,2),   2\times (1,3), (1,5)\},\\ 
B_{ii}={}&\{7\times (1,2), (2,5), (1,3), (1,5) \},\\
B_{iii}={}&\{6\times (1,2), 2\times (2,5), (1,5) \},\\
B_{iv}={}&\{6\times (1,2), (3,7), (1,3), (1,5) \},\\
B_{v}={}&\{5\times (1,2), (4,9), (1,3), (1,5) \},\\
B_{vi}={}&\{7\times (1,2), (3,8), (1,5) \},\\
B_{vii}={}&\{5\times (1,2), (3,7),  (2,5), (1,5) \}.
\end{align*}
For $B_{v}$ (corresponding to No.F) and $B_{vi}$ (corresponding to No.E), take $m=9$. Since $P_{-9}\geq 3$, we can take $m_1=9$ by Theorem \ref{k2}. For other cases, take $m=6$. Since $P_{-6}\geq 3$, we can take $m_1=6$ by Theorem \ref{k2}.

If $P_{-3}=0$, by  inequality  (\ref{8}), $P_{-4}\leq 2$. 
Firstly, consider the case $(P_{-3}, P_{-4})=(0,2)$. 
We have $B^{(0)}=\{9\times (1,2), (1,4), (1,s) \}$ for some integer $s\geq 5$. In fact, $5\leq s\leq 6$ by 
$\gamma(B^{(0)})>0$. When $s=6$, $B=B^{(0)}$ since $B^{(0)}$ admits no further packings. Take $m=7$. Since $P_{-7}=6$, we can take $m_1=7$ by Theorem \ref{k2}. When $s=5$, the property $\gamma >0$ implies that $B^{(0)}$ admits at most one further packings. Thus either $B =\{9\times (1,2),    (1,4), (1,5) \}$ (take $m=4$) or $B= \{9\times (1,2), (2,9) \}$ (take $m=8$). For the first basket, $P_{-4}=2$ and $P_{-8}=7$, we can take $m_1=8$ by Theorem \ref{k2}. For the second basket, $P_{-8}=7$ and we can take $m_1=8$ by Theorem \ref{k2}.

{}Finally we consider the case $(P_{-3}, P_{-4})=(0,1)$.  We have $B^{(0)}=\{9\times (1,2), (1,3), (1,s) \}$ for some integer $s\geq 5$. Similarly, $\gamma(B^{(0)})>0$ implies $5\leq s\leq 7$. When $s=7$, the property $\gamma>0$ implies that either $B=\{9\times (1,2), (1,3), (1,7) \}$ or $B=\{8\times (1,2), (2,5), (1,7) \}$.  Take $m=8$. Since $P_{-8}\geq 3$, we can take $m_1=8$ by Theorem \ref{k2}.
When $s=6$, the inequalities $\gamma>0$ and $-K^3>0$ imply that $B$ must be one of the following baskets:
\begin{align*}
%&\{9\times (1,2), (1,3), (1,6) \},\\
{}&\{8\times (1,2), (2,5), (1,6) \},\\ 
{}&\{7\times (1,2), (3,7), (1,6) \},\\
{}&\{6\times (1,2), (4,9), (1,6) \}.
\end{align*}
Take $m=7$. Since $P_{-7}\geq 3$, we can take $m_1=7$ by Theorem \ref{k2}.
When $s=5$, inequalities $\gamma >0$ and $-K^3>0$ imply that $B$ is among one of the following baskets: 
\begin{align*}
B_a={}&\{7\times (1,2), (3,7), (1,5) \},\\ 
B_b={}&\{6\times (1,2), (4,9), (1,5) \},\\
B_c={}&\{5\times (1,2), (5,11), (1,5) \},\\
B_d={}&\{4\times (1,2), (6,13), (1,5) \}.
\end{align*}
For $B_d$ (corresponding to No.D), take $m=11$. Since $P_{-11}\geq 3$, we can take $m_1=11$ by Theorem \ref{k2}.
For other baskets (corresponding to No.A--No.C), take $m=9$. Since $P_{-9}\geq 3$, we can take $m_1=9$ by Theorem \ref{k2}. So the theorem is proved. 
\end{proof}

\subsection{Exceptional cases}\label{exceptional}\

In this subsection, we treat the exceptional cases in Theorem \ref{p0}.

\begin{thm}\label{ex}
Let $X$ be a $\mathbb{Q}$-Fano $3$-fold with basket of singularities $B$.
\begin{itemize}
\item[(i)]  If $B$ is of type No.1--No.4 as in Theorem \ref{p0}, then  $\dim\overline{\varphi_{-10}(X)}>1$. 
\item[(ii)]  If $B$ is of type No.A--No.D as in Theorem \ref{p0}, then  $\dim\overline{\varphi_{-8}(X)}>1$. 
\item[(iii)]  If $B$ is of type No.E--No.F as in Theorem \ref{p0}, then  $\dim\overline{\varphi_{-6}(X)}>1$.
\end{itemize}
\end{thm}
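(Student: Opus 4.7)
The plan is to inspect each of the ten exceptional baskets of Theorem \ref{p0} separately, using in each case a direct computation of the relevant anti-plurigenera combined with a bootstrapping argument via Theorem \ref{k1} and Lemma \ref{pencils}. First, for each basket, I would use Reid's Riemann--Roch formula (and the relation $-K_X^3 = 2P_{-1} + \sigma(B) - \sigma'(B) - 6$) to compute $P_{-n}(X)$ for all $n$ up to the target $m_1 \in \{10, 8, 6\}$ corresponding to parts (i), (ii), (iii) respectively.

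For each basket, the core step is to locate an auxiliary integer $m < m_1$ satisfying (a) Theorem \ref{k1} applies at $m$ and lands in Case (II), so that $-mK_X \sim S$ with $|S|$ an irreducible rational pencil and $|-mK_X|$ has no fixed part; and (b) $a := m_1 - m$ satisfies $P_{-a}(X) \geq 1$, so that $-aK_X$ is effective and hence $-mK_X \leq -m_1 K_X$ for suitable divisors. Given such an $m$, Lemma \ref{pencils} forces any pencil composition of $|-m_1 K_X|$ to coincide with $|S|$, so one can write $|-m_1 K_X| = |kS| + F$ with $k = P_{-m_1} - 1$ and $F \geq 0$; using $S \sim -mK_X$, the identity $kS + F \sim -m_1 K_X$ rearranges to $F \sim (km - m_1) K_X$. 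When $km > m_1$, effectiveness of $F$ produces an effective divisor linearly equivalent to a positive multiple of $K_X$, which contradicts the non-pseudo-effectiveness of $K_X$ on the weak $\bQ$-Fano $X$ (since $-K_X$ is big). Hence $|-m_1 K_X|$ is not composed with a pencil, giving $\dim\overline{\varphi_{-m_1}(X)} > 1$.

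Concretely, for part (ii) the natural choice is $m = 6$, $a = 2$, with $k = 2$ and $km - m_1 = 4 > 0$; I would verify $P_{-2} = 1$, $P_{-6} = 2$, $P_{-8} = 3$ from Reid's formula for each of No.A--No.D. For part (iii) one takes $m = 4$, $a = 2$, and checks that $(P_{-6} - 1)\cdot 4 > 6$, i.e.\ $P_{-6} \geq 3$ for No.E, No.F. For part (i), baskets No.1 and No.3 are already settled by the argument in Theorem \ref{p0}, and for No.2, No.4 the auxiliary choice is $m = 7$ (Case (I) of Theorem \ref{k1}, with $-7K_X$ an effective prime) with $a = 3$, or $m = 8$ (Case (II)) with $a = 2$, adapted so that $km - 10 > 0$.

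The main obstacle I anticipate is that Theorem \ref{k1} does not apply directly at the natural auxiliary $m$ for several of the baskets: for instance, $m = 6$ violates the congruence conditions for No.B because of the singularity $(4,9)$ (the residue $6 \equiv -3 \pmod 9$ is not among $0, \pm 1, \pm 2, 3, 4$), and similar failures occur at singularities of type $(1,5)$, $(3,8)$, and $(5,11)$. In these cases one must replace the direct appeal to Theorem \ref{k1} by a finer analysis of fixed parts. The scheme is: writing a hypothetical fixed part $F_m$ of $|-mK_X|$ as a combination of the effective divisors $D_2, D_3, D_4, \ldots$ guaranteed by the (computed) values $P_{-2}, P_{-3}, P_{-4} \geq 1$, the movable part has $h^0$ equal to $h^0(-mK_X - F_m)$ which in turn equals $P_{-(m-|F_m|)}$ for some shift; matching this against $P_{-m}$ eliminates every nontrivial possibility and forces $F_m = 0$. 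This case-by-case elimination is the delicate part, but the arithmetic is elementary once the Reid-formula table of $P_{-n}$ is in hand.
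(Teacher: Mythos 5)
Your core mechanism is fine as far as it goes: if an auxiliary level $m<m_1$ satisfies the hypotheses of Theorem \ref{k1} with $P_{-m}=2$ (Case (II)) and $P_{-(m_1-m)}\geq 1$, then Lemma \ref{pencils} identifies the two pencils and $F\sim(km-m_1)K_X$ with $km>m_1$ contradicts nefness and bigness of $-K_X$; this does handle No.A, where $6\equiv 0,-1,1$ modulo $2,7,5$. But it covers almost none of the other exceptional baskets, which is precisely why they are exceptional. In part (i) your auxiliary choices break down at the containment step: No.1--No.4 all have $P_{-1}=P_{-2}=P_{-3}=0$, so neither $|{-7K_X}|\preceq|{-10K_X}|$ (your $a=3$) nor $|{-8K_X}|\preceq|{-10K_X}|$ (your $a=2$) is available, and without it Lemma \ref{pencils} cannot compare the pencils at all; moreover the only level $\leq 10$ with $P_{-m}=2$ is $m=8$, so no Case (II) auxiliary exists. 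The paper instead takes $m=4$ (Theorem \ref{k1} applies and $P_{-4}=1$ gives a prime divisor $E\sim-4K_X$) and uses $P_{-6}=1>0$ to place $E$ inside a member of $|{-10K_X}|=|nS|+E'$, whence $1=P_{-6}=h^0(-10K_X-E)\geq h^0(S)=2$.

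The more serious gap is in parts (ii) and (iii) for No.B--No.F. As you note, Theorem \ref{k1} fails at the only admissible auxiliary levels (at $m=6$ for $(4,9)$, $(5,11)$, $(6,13)$; at $m=4$ and $6$ for $(3,8)$ and $(4,9)$), and your fallback --- forcing the fixed part $F_m=0$ by matching $h^0(-mK_X-F_m)$ against some $P_{-j}$ --- rests on a false premise: the fixed part need not be a sum of divisors linearly equivalent to multiples of $-K_X$. In these baskets the unique member $D\in|{-2K_X}|$ splits as $E_1+E_2$ with $E_i$ prime and not proportional to $-K_X$ (recall $P_{-1}=0$), so the fixed parts of $|{-6K_X}|$ and $|{-8K_X}|$ (resp. $|{-4K_X}|$, $|{-6K_X}|$) can a priori be $aE_1+bE_2\neq 0$, and no plurigenus bookkeeping of the kind you describe excludes, say, $a_6=2$ or $3$. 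Indeed the paper never proves the fixed parts vanish; it first uses Theorem \ref{k2} (with $m=2$) to see $D$ is not prime, then the failure analysis of inequality (\ref{qq}) in the proof of Theorem \ref{k1} to pin down the local index of each $E_i$ at the $(1,5)$ point to be $4$, Prokhorov's torsion-freeness result \cite{Prok} to get $E_1\neq E_2$ (resp. to rule out $2$-torsion), and finally eliminates every admissible fixed part $aE_1+bE_2$ by linear-equivalence relations combined with local-index congruences modulo $5$ at that point. These ingredients --- the local index argument at $(1,5)$ and the torsion-freeness input --- are absent from your proposal, so the argument as written does not close for No.B--No.F (nor, in its stated form, for No.2 and No.4).
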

\begin{proof} (i).  Recall the proof in Case I of Theorem \ref{p0}. We may only consider the two cases with No.2 and No.4. Since $P_{-9}=2$, $\delta_1(X)\geq 10$.  We want to show that $\delta_1(X)=10$ in both cases. In fact, we have $P_{-4}= P_{-6}=1$, $P_{-8}=2$, $P_{-10}\geq 3$.  Note that the conditions of Theorem \ref{k1} are all satisfied with $m=4$.  It follows that $-4K_X\sim E$ is a prime divisor.  Assume that $\dim\overline{\varphi_{-10}(X)}=1$, then we can write $|-10K_X|=|nS|+E'$ with $n\geq 2$, $|S|$ is an irreducible rational pencil of surfaces and $E'$ is the fixed part. By $P_{-6}>0$, we have $ E\leq |nS|+E'$. Since $E$ is reduced and irreducible, either $ E\leq |S| $ or $ E\leq  E'$ holds.  
Then 
$$
P_{-6}=h^0(-10K_X-E)=h^0(nS+E'-E)\geq h^0(S)=2,
$$
a contradiction.

(ii). Recall the last part of Subsubcase II-3-iii in the proof of Theorem \ref{p0}. If $B$ is of type No.A--No.D, we have $P_{-2}= P_{-4}=1$, $P_{-6}=2$, and $P_{-8}= 3$. Assume, to the contrary,  that  $\dim\overline{\varphi_{-8}(X)}=1$. 

Write $-2K_X\sim D$ for some effective Weil divisor. By Theorem \ref{k2}(i) (with $m=2$), $D$ must be either reducible or non-reduced. As in the proof of Theorem \ref{k1}, take $E$ to be any strictly effective divisor such that $E<D$.  Then   inequality  (\ref{qq}) must fail for some singularity $Q$ in $B_a$--$B_d$. Clearly, such an offending singularity $Q$ must be ``$(1,5)$''. By equality (\ref{qq2}), the local index $i_Q(E)$ of $E$ should be $4$ since  inequality  (\ref{qq}) holds for $i\in\{0,1,2,3\}$ and $(b,r)=(1,5)$, that is, $E\sim -K_X$ at $Q$. Since $E$ is arbitrary such that $0<E<D$ and $i_Q(-2K_X)=3$, we conclude that $D=E_1+E_2$ where $E_i$ is fixed prime divisor with $i_Q(E_i)=4$ for $i=1,2$. 

If $E_1=E_2$, then $2(-K_X-E_1)\sim 0$.  By \cite[Proposition 2.9]{Prok} and  the fact that $-K_X-E_1$ is Cartier at $Q$, we conclude that $-K_X-E_1$ is not $2$-torsion. Hence $-K_X-E_1\sim 0$, which contradicts $P_{-1}=0$. Thus $E_1$ and $E_2$ are different prime divisors. 

Since $|-6K_X|\preceq |-8K_X|$, by Lemma \ref{pencils} we can write
\begin{align*}
|-6K_X|{}&=|S|+a_6E_1+b_6E_2,\\
|-8K_X|{}&=|2S|+a_8E_1+b_8E_2,
\end{align*}
where $|S|$ is an irreducible rational pencil of surfaces, $a_iE_1+b_iE_2$ is the fixed part, $a_i,b_i\in \mathbb{N}$ for $i=6,8$.

\begin{claim}
$a_6b_6=a_8b_8=0$.
\end{claim}
\begin{proof}
Assume that $a_6,b_6\geq 1$, then
$$
P_{-4}=h^0(-6K_X-E_1-E_2)\geq h^0(S)=2,
$$
a contradiction. Similarly, we have $a_8b_8=0$.
\end{proof}

We may assume that $b_6=0$. Then 
\begin{align}\label{a6}
3E_1+3E_2\in |S+a_6E_1|=|S|+a_6E_1.
\end{align}
It follows that $a_6\leq 3$.
\smallskip

{\bf Case ii.1.} $b_8=0$.

In this case
$$
2S+a_8E_1 \sim -8K_X \sim -6K_X+E_1+E_2 \sim S+(a_6+1)E_1+E_2.
$$
Since $a_8E_1$ is the fixed part of $|2S+a_8E_1|$, $a_8\leq a_6+1$.
Then
\begin{align}\label{x2}
S  \sim (a_6+1-a_8)E_1+E_2.
\end{align}
By relations (\ref{a6}) and (\ref{x2}),
\begin{align}\label{xa}
(2a_6+1-a_8)E_1+E_2\sim 3E_1+3E_2.
\end{align}
Clearly, $2a_6+1-a_8\leq 3$ is absurd. Thus $2a_6+1-a_8\geq 4$. On the other hand $2a_6+1-a_8\leq 7$ since $a_6\leq 3$.
Locally at $Q$, since $i_Q(E_1)=i_Q(E_2)=4$, we have
$$
2a_6+1-a_8\equiv 0 \mod 5.
$$
So $2a_6+1-a_8=5$. Then relation (\ref{xa}) implies $2E_1\sim 2E_2$. By \cite[Proposition 2.9]{Prok}, we conclude that $E_1\sim E_2$, a contradiction.
\smallskip

{\bf Case ii.2.} $a_8=0$ and $b_8>0$.

In this case
$$
2S+b_8E_2 \sim -8K_X \sim -6K_X+E_1+E_2 \sim  S+(a_6+1)E_1+E_2.
$$
This implies that $b_8\leq 1$. Hence $b_8=1$ and 
\begin{align}\label{x3}
S \sim  (a_6+1)E_1.
\end{align}
By relations (\ref{a6}) and (\ref{x3}),
\begin{align}
(2a_6+1)E_1 \sim 3E_1+3E_2.
\end{align}
Clearly $2a_6+1\geq 4$ and $2a_6+1\leq 7$ since $a_6\leq 3$.
Locally at $Q$, since $i_Q(E_1)=i_Q(E_2)=4$, we have
$$
2a_6+1\equiv 1 \mod 5.
$$
Since $4\leq 2a_6+1\leq 7$, this is impossible.
\smallskip

(iii).  Recall the cases with $B_v$ (No.F) and $B_{vi}$ (No. E) (see Subsubcase II-3-iii in the proof of Theorem \ref{p0}). We have $P_{-2}= 1$, $P_{-4}=3$, $P_{-6}=9$. Assume, to the contrary, that  $\dim\overline{\varphi_{-6}(X)}=1$. 

We can write $-2K_X\sim D$ for some effective divisor $D$. By the same argument as (ii), $D=E_1+E_2$ with $E_i$ reduced and irreducible and $i_Q(E_i)=4$ for $i=1,2$ where $Q$ is the singularity ``$(1,5)$''.  Note that, however, we do not know if $E_1$ and $E_2$ are different.

Since $|-4K_X|\preceq |-6K_X|$, by Lemma \ref{pencils} we can write
\begin{align*}
|-4K_X|{}&=|2S|+a_4E_1+b_4E_2,\\
|-6K_X|{}&=|8S|+a_6E_1+b_6E_2,
\end{align*}
where $|S|$ is an irreducible  rational pencil of surfaces and $a_iE_1+b_iE_2$ is the fixed part, $a_i,b_i\in \mathbb{N}$ for $i=4,6$.
Hence
$$
2S+a_4E_1+b_4E_2\sim -4K_X\sim2(-2K_X)\sim 2E_1+2E_2.
$$
Since $a_4E_1+b_4E_2$ is the fixed part of $| 2S+a_4E_1+b_4E_2|$,  we may assume $a_4\leq b_4\leq 2$.

If $b_4=2$, then $2S\sim (2-a_4)E_1$. Hence $E_1\leq S$ by the irreducibility of $E_1$.  Then 
$$
1=h^0(E_1 )\geq h^0(2S-E_1)\geq h^0(S)=2,
$$
a contradiction.

If $b_4=1\geq a_4$, then $2S\sim (2-a_4)E_1+E_2\geq E_1+E_2.$ Hence $E_1\leq S$ by the irreducibility of $E_1$. Then 
$$
1=h^0(E_1 +E_2)\geq h^0(2S-E_1)\geq h^0(S)=2,
$$
a contradiction.

Hence $a_4=b_4=0$ and $
2S\sim 2E_1+2E_2.
$ Then
\begin{align*}
0\sim{}& -6K_X-3E_1-3E_2\\
\sim {}&4(2E_1+2E_2) +a_6E_1+b_6E_2-3E_1-3E_2\\
\geq{}& 5E_1+5E_2,
\end{align*}
a contradiction. So we have proved the theorem.
\end{proof}

To make the summary, Theorems \ref{p0}  and \ref{ex} directly imply the following:

\begin{cor}\label{00} Let $X$ be a $\mathbb{Q}$-Fano $3$-fold with $P_{-1}=0$. Then $\delta_1(X)\leq 8$ except for the following cases:
{\small $$\begin{array}{lll}
\rm{No.1.} &\{2 \times (1,2), 3 \times (2,5),(1,3),(1,4)\}&\delta_1(X)= 10;\\
\rm{No.2.} &\{5 \times (1,2), 2 \times (1,3),(2,7),(1,4)\}&\delta_1(X)=10;\\
\rm{No.3.}&\{5 \times (1,2), 2 \times (1,3),(3,11)\} &\delta_1(X)=10;\\
\rm{No.4.}&  \{5 \times (1,2), (1,3),(3,10),(1,4)\}&\delta_1(X)=10;\\
\rm{No.A.}&  \{7\times (1,2), (3,7), (1,5) \}&\delta_1(X)= 8;\\
\rm{No.B.}&  \{6\times (1,2), (4,9), (1,5) \}&\delta_1(X)=8;\\
\rm{No.C.}&  \{5\times (1,2), (5,11), (1,5) \}&\delta_1(X)=8;\\
\rm{No.D.}&  \{4\times (1,2), (6,13), (1,5) \}&\delta_1(X)=8;\\
\rm{No.E.}&  \{7\times (1,2), (3,8), (1,5) \}&\delta_1(X)\leq 6;\\
\rm{No.F.}&  \{5\times (1,2), (4,9), (1,3), (1,5) \}&\delta_1(X)\leq 6.
\end{array}$$}
\end{cor}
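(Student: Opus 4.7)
The corollary is a repackaging of Theorems \ref{p0} and \ref{ex}, so my plan is to split on the basket of singularities $B_X$ and then read off $\delta_1(X)$ in each case from the two theorems, supplemented by a short numerical verification for the lower bounds.

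First I would invoke Theorem \ref{p0}: for any $\mathbb{Q}$-Fano $3$-fold $X$ with $P_{-1}=0$, either $\delta_1(X) \leq 8$ (which already settles the generic conclusion of the corollary) or $B_X$ lies in the exceptional list of ten baskets No.1--No.4 and No.A--No.F. Only the ten exceptional baskets need further work. For No.E--No.F, the assertion $\delta_1(X) \leq 6$ is exactly Theorem \ref{ex}(iii); for No.A--No.D, Theorem \ref{ex}(ii) gives $\delta_1(X) \leq 8$; for No.1--No.4, Theorem \ref{ex}(i) gives $\delta_1(X) \leq 10$. So all upper bounds in the table are already in hand.

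Next I would establish the matching lower bounds $\delta_1(X) \geq 8$ (for No.A--No.D) and $\delta_1(X) \geq 10$ (for No.1--No.4). The key observation is that $P_{-m} \leq 2$ forces $\dim\overline{\varphi_{-m}(X)} \leq 1$, so it suffices to check that $P_{-m} \leq 2$ in the relevant range. For No.1--No.4, Proposition \ref{list} already tabulates $P_{-m} \leq 2$ for $3 \leq m \leq 8$, and $P_{-1}=P_{-2}=0$ by hypothesis; I only need to compute $P_{-9}$ for each of the four baskets using Reid's formula and verify $P_{-9} \leq 2$ (the proof of Theorem \ref{ex}(i) in fact records $P_{-9}=2$ for No.2 and No.4, and the same evaluation yields the same bound for No.1 and No.3). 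For No.A--No.D, the analysis in Subsubcase II-3-iii of the proof of Theorem \ref{p0} already pins down $P_{-1}=0$, $P_{-2}=1$, $P_{-3}=0$, $P_{-4}=1$, and $P_{-6}=2$; I would then evaluate $P_{-5}$ and $P_{-7}$ from Reid's formula on each of the four baskets to see that both are $\leq 2$ as well.

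There is no real obstacle: every computation is a direct plug-in to the formula
\[
P_{-m}=\tfrac{1}{12}m(m+1)(2m+1)(-K_X^3)+(2m+1)-\sum_i\sum_{j=1}^{m}\frac{\overline{jb_i}(r_i-\overline{jb_i})}{2r_i},
\]
and the volumes $-K_X^3$ of the eight baskets in question are already listed in Proposition \ref{list} or easily read off from the data of No.A--No.D. The work is pure bookkeeping, so I would simply state the corollary as a formal consequence of Theorems \ref{p0} and \ref{ex} after noting that these $P_{-m}$ evaluations supply the needed lower bounds.
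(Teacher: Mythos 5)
Your proposal is correct and follows the paper's own route: the paper states Corollary \ref{00} as a direct consequence of Theorems \ref{p0} and \ref{ex}, with the equalities resting on exactly the lower-bound observation you make (image in $\mathbb{P}^{P_{-m}-1}$, so $P_{-m}\leq 2$ forces $\dim\overline{\varphi_{-m}(X)}\leq 1$), the needed values being those recorded in Proposition \ref{list}, in the proofs of Theorem \ref{ex} (e.g.\ $P_{-9}=2$ for No.1--No.4), and by Reid's formula for the remaining $P_{-5},P_{-7}$ of No.A--No.D. Your explicit spelling out of this bookkeeping is just what the paper leaves implicit, so no further comment is needed.
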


Theorem \ref{main1} follows directly from Theorems \ref{p3},  \ref{p2}, and \ref{p1},  and Corollary \ref{00}.

\section{\bf When is $|-mK_X|$ not composed with a pencil?  (Part II)}\label{section non-pencil weak}

As we have seen in the last section, the condition $\rho(X)=1$ is crucial to proving Theorem \ref{k1}.  For arbitrary weak $\bQ$-Fano 3-folds, we have to study in an alternative way. Naturally what we can prove is weaker than Theorem \ref{main1}.

 Let $X$ be a weak $\bQ$-Fano 3-fold.  We are going to estimate $\delta_1(X)$ from above. 
The main idea is to relate this problem to the value distribution of  the Hilbert function $\chi_{-m}=P_{-m}$. 

\begin{lem}\label{b L^2} Keep the same notation as in Subsection \ref{b setting}.
The number $r_X(\pi^*(-K_X)^2\cdot S)_Y$ is a positive integer.
\end{lem}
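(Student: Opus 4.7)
The plan is to reduce the intersection to the smooth surface $S$ (and, in parallel, down to $X$ via the projection formula), and then use the definition of the Gorenstein index $r_X$.

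Since $S$ is a Cartier divisor on the smooth threefold $Y$, I can rewrite
\[
(\pi^*(-K_X)^2\cdot S)_Y=(\pi^*(-K_X)|_S)^2_S,
\]
the self-intersection of a $\mathbb{Q}$-Cartier divisor on the smooth projective surface $S$. Because $r_XK_X$ is Cartier by definition of $r_X$, the pullback $r_X\pi^*(-K_X)=\pi^*(-r_XK_X)$ is an integral Cartier divisor on $Y$, so $L:=r_X\pi^*(-K_X)|_S$ is an integral Cartier divisor on $S$. The identity $L^2=r_X^2(\pi^*(-K_X)^2\cdot S)_Y\in\mathbb{Z}$ already gives $r_X^2$-integrality; the content of the lemma is the sharper $r_X$-integrality together with positivity.

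To upgrade to $r_X$-integrality, I would apply the projection formula to obtain $(\pi^*(-K_X)^2\cdot S)_Y=((-K_X)^2\cdot\pi_*S)_X$. The effective Weil divisor $\pi_*S$ is nonzero: by base-point-freeness of $|M|$ and the assumption $h^0(D)\geq 2$, the morphism $\pi|_S$ has $2$-dimensional image in $X$, so $\pi_*S$ is supported on the prime divisor $T:=\pi(S)\subset X$. Because $X$ is $\mathbb{Q}$-factorial with only cyclic quotient terminal singularities of types $\frac{1}{r_i}(1,-1,b_i)$, every Weil divisor has Cartier index dividing $r_X$, and in particular $r_XT$ is Cartier. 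Combining this with the Cartierness of $-r_XK_X$ and a short local analysis at each orbifold point of $X$ then pins down $r_X((-K_X)^2\cdot\pi_*S)_X\in\mathbb{Z}$. As an alternative route, after reducing to the case when $|D|$ has no fixed divisorial component, the decomposition $\pi^*D=\iota S+F$ with $F$ supported on the exceptional divisors of $\pi$ yields the cleaner identity $(\pi^*(-K_X)^2\cdot S)_Y=\tfrac{m_0}{\iota}(-K_X)^3$, and one invokes Reid's formula to conclude $r_X(-K_X)^3\in\mathbb{Z}$.

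For positivity, I would use that $-K_X$ is nef and big on $X$. The pullback $\pi^*(-K_X)$ is nef on $Y$, and its restriction $\pi^*(-K_X)|_S$ is nef on $S$ and big, because $\pi|_S\colon S\to T$ is birational (by the $2$-dimensionality of the image) and $(-K_X)|_T$ is big on the prime divisor $T$; consequently, its self-intersection is strictly positive.

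The main obstacle will be the precise $r_X$-divisibility step. The elementary Cartier-integrality argument on the smooth surface $S$ only yields $r_X^2$-integrality of the intersection number; the sharpening to the stated $r_X$-integrality requires a careful local analysis at each cyclic quotient singularity of $X$, or equivalently the combined use of Reid's formula (giving $r_X(-K_X)^3\in\mathbb{Z}$) with the $\pi$-exceptional structure of the residual part of $\pi^*D$ after subtracting $\iota S$.
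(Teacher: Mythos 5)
Your positivity argument is fine and is essentially what the paper asserts. The genuine gap is exactly where you flag it: the passage from $r_X^2$-integrality to $r_X$-integrality, and neither of your routes closes it. In Route A, the premise that $X$ has ``only cyclic quotient terminal singularities of types $\frac{1}{r_i}(1,-1,b_i)$'' is false: Reid's basket consists of virtual orbifold points, while the actual singularities of $X$ are isolated cDV quotients, so there are no actual orbifold points of $X$ at which to carry out the proposed ``short local analysis''. The fact that $r_XT$ is Cartier for every Weil divisor $T$ is indeed true (it is a theorem of Kawamata on the Cartier index of Weil divisors at terminal $3$-fold points, not something you can read off the basket), but even granting it your argument stalls at the crux: pairing the Cartier divisor $-r_XK_X$ against the class $(-K_X)\cdot T$ gives an integer only if that $1$-cycle class is integral, which it is not; what follows from the stated Cartier-ness is again only $(-r_XK_X)^2\cdot T\in\ZZ$, i.e.\ $r_X^2$-integrality. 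In Route B, the ``reduction to the case when $|D|$ has no fixed divisorial component'' is not legitimate: if $|-m_0K_X|$ has fixed part $E_0$, then $\pi_*F=E_0\neq 0$ and the discrepancy $(-K_X)^2\cdot E_0$ is a quantity of exactly the same nature as the one in the lemma, so the reduction is circular. Moreover, even with no fixed part, the identity $(\pi^*(-K_X)^2\cdot S)_Y=\frac{m_0}{\iota}(-K_X^3)$ combined with $r_X(-K_X^3)\in\ZZ$ (which Reid's formula does give) only yields $r_X(\pi^*(-K_X)^2\cdot S)_Y=\frac{m_0}{\iota}\,r_X(-K_X^3)$; when $|-m_0K_X|$ is composed with a pencil one has $\iota=P_{-m_0}-1>1$, and the divisibility of $m_0r_X(-K_X^3)$ by $\iota$ is precisely what would have to be proved.

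The missing idea, and the paper's actual proof, is to trade one factor of $\pi^*(-K_X)$ for an integral canonical divisor on a smooth model. Since the number is independent of the chosen modification (projection formula), one may take $\pi$ to factor through a resolution $\tau\colon\hat{W}\to X$ via $\theta\colon Y\to\hat{W}$ and set $S_1=\theta_*(S)$, so that $(\pi^*(-K_X)^2\cdot S)_Y=(\tau^*(-K_X)^2\cdot S_1)_{\hat{W}}$. Writing $K_{\hat{W}}=\tau^*(K_X)+\Delta_\tau$ with $\Delta_\tau$ an effective exceptional $\bQ$-divisor lying over the isolated terminal points, the $1$-cycle $\Delta_\tau\cdot S_1$ is contracted by $\tau$, hence pairs to zero with the pullback $\tau^*(-K_X)$; therefore $r_X(\pi^*(-K_X)^2\cdot S)_Y=(\tau^*(-r_XK_X)\cdot(-K_{\hat{W}})\cdot S_1)_{\hat{W}}$, a triple intersection of integral Cartier divisors on a nonsingular projective threefold, hence an integer. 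This single substitution is what recovers the factor of $r_X$ that both of your routes are missing.
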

\begin{proof} Note that the number $(\pi^*(-K_X)^2\cdot S)_Y$ is
positive since $$(\pi^*(-K_X)^2\cdot S)_Y=(\pi^*(-K_X)|_S)_S^2$$ and
$\pi^*(-K_X)|_S$ is nef and big on $S$. It is independent of
the choice of $\pi$ according to the projection formula of the
intersection theory. So we may choose such a modification $\pi$ that
dominates a resolution of singularities $\tau: \hat{W}\rightarrow
X$. Then we see $(\pi^*(-K_X)^2\cdot S)_Y=(\tau^*(-K_X)^2\cdot
{S_1})_{\hat{W}}$ where $S_1=\theta_*(S)$ is a divisor on $\hat{W}$
and $\theta:Y\rightarrow \hat{W}$ is a birational morphism. Note that,
however, $S_1$ is a generic element in an algebraic family though it
is not necessarily nonsingular.

We may write $K_{\hat{W}}=\tau^*(K_X)+\Delta_{\tau}$ where
$\Delta_{\tau}$ is an exceptional effective $\bQ$-divisor over those
isolated terminal singularities on $X$. Now, by intersection theory,
we have $$(r_X\tau^*(-K_X)\cdot \tau^*(-K_X)\cdot
S_1)_{\hat{W}}=(r_X\tau^*(-K_X)\cdot (-K_{\hat{W}})\cdot S_1)_{\hat{W}}$$ is an
integer.
\end{proof}

\begin{cor}\label{b non-pencil} Let $X$ be a weak $\bQ$-Fano 3-fold.
If $$P_{-m}>r_X(-K_X)^3m+1$$ for some integer $m$, then
$|-mK_X|$ is not composed with a pencil.
\end{cor}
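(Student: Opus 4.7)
\textbf{Proof proposal for Corollary \ref{b non-pencil}.}

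The plan is to argue by contradiction using the setup from Subsection \ref{b setting} applied to $D := -mK_X$. Suppose $|-mK_X|$ is composed with a pencil, so we are in Case $(f_{\mathrm{p}})$ and may write, on a suitable resolution $\pi\colon Y\to X$, the movable part of $|\lfloor \pi^*(-mK_X)\rfloor|$ as $M = nS$, where $S$ is a generic irreducible element of $|M|$ and $n = h^0(-mK_X)-1 = P_{-m}-1$.

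First I would observe that $\pi^*(-mK_X) - M$ is effective: indeed $M \le \lfloor \pi^*(-mK_X) \rfloor \le \pi^*(-mK_X)$ as divisors on $Y$. Since $\pi^*(-K_X)$ is nef (being the pullback of a nef divisor), intersecting against the nef class $\pi^*(-K_X)^2$ gives
\begin{equation*}
0 \;\le\; \pi^*(-K_X)^2 \cdot \bigl(\pi^*(-mK_X) - M\bigr) \;=\; m(-K_X)^3 - n\,\pi^*(-K_X)^2\cdot S,
\end{equation*}
using the projection formula for the first term.

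Next I would invoke Lemma \ref{b L^2}: the quantity $r_X(\pi^*(-K_X)^2\cdot S)$ is a positive integer, so in particular $\pi^*(-K_X)^2\cdot S \ge 1/r_X$. Combining this with the inequality above yields
\begin{equation*}
\frac{P_{-m}-1}{r_X} \;\le\; n\,\pi^*(-K_X)^2\cdot S \;\le\; m(-K_X)^3,
\end{equation*}
i.e.\ $P_{-m} \le r_X(-K_X)^3 m + 1$, which contradicts the hypothesis.

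There is no real obstacle here; the mildly delicate point is just to confirm that $\pi^*(-mK_X) - M$ is effective (so that the intersection with $\pi^*(-K_X)^2$ is nonnegative) and that the positivity $\pi^*(-K_X)^2\cdot S > 0$ provided by Lemma \ref{b L^2} refines to the integrality bound $\ge 1/r_X$, which is exactly what that lemma supplies. Everything else is just the projection formula.
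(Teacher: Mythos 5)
Your argument is correct and is essentially the paper's own proof: both write $m\pi^*(-K_X)\geq M_{-m}\equiv (P_{-m}-1)S$, intersect with $\pi^*(-K_X)^2$, and use Lemma \ref{b L^2} to get $(\pi^*(-K_X)^2\cdot S)\geq 1/r_X$, contradicting the hypothesis. The only difference is that you spell out the effectivity and nefness justifications that the paper leaves implicit.
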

\begin{proof} Assume that $|-mK_X|$ is composed with a pencil. Set $D:=-mK_X$ and keep the same notation as in Subsection \ref{b setting}.
Then we have $m\pi^*(-K_X)\geq M_{-m}\equiv (P_{-m}-1)S$. Thus $$m(-K_X)^3\geq
(P_{-m}-1)(\pi^*(-K_X)^2\cdot S)\geq \frac{1}{r_X}(P_{-m}-1)$$ by
Lemma \ref{b L^2}, a contradiction. \end{proof}

Next we estimate the number $m$ which satisfies Corollary \ref{b non-pencil}. We will do this in two steps as follows. 
\begin{prop}\label{b thm1}
Let $X$ be a weak $\mathbb{Q}$-Fano $3$-fold. Take an arbitrary real number $0<t\leq 37$. Denote $r_{\max}:=\max\{r_i\in B_X\}$ the maximum of local indices of singularities. If $$m\geq \max\left\{37, \frac{r_{\max}t}{3}, \sqrt{6r_X+\frac{12}{t(-K_X^3)}}\right\},$$ then $P_{-m}\geq r_X (-K_X^3)m+2$. In particular, $|-mK_X|$ is not composed with a pencil.
\end{prop}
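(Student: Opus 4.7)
The strategy is to apply Reid's Riemann--Roch formula from Section 2.2 to express $P_{-m}$ explicitly, then show the resulting inequality by dominating the singular correction term $l(m+1)$ using the three hypotheses on $m$.

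First I would rewrite the desired inequality. By Reid's formula,
\[
P_{-m} - r_X(-K_X^3)\,m - 2
  = \frac{(-K_X^3)\,m\,[(m+1)(2m+1)-12 r_X]}{12} + (2m-1) - l(m+1),
\]
so it suffices to make the right-hand side non-negative. The hypothesis $m \geq \sqrt{6 r_X + 12/(t(-K_X^3))}$ rearranges as $2m^2 \geq 12 r_X + 24/(t(-K_X^3))$, whence $(m+1)(2m+1) - 12 r_X \geq 3m+1 + 24/(t(-K_X^3))$. Substituting gives
\[
\frac{(-K_X^3)\,m\,[(m+1)(2m+1)-12 r_X]}{12}
  \geq \frac{2m}{t} + \frac{(-K_X^3)\,m(3m+1)}{12}.
\]

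Second I would bound $l(m+1)$ from above. For each $(b_i,r_i)\in B_X$, write $m=q_i r_i + s_i$ with $0\leq s_i<r_i$; since $\sum_{j=1}^{r_i} \overline{jb_i}(r_i-\overline{jb_i})/(2r_i) = (r_i^2-1)/12$, the inner sum splits as $q_i(r_i^2-1)/12$ plus a partial sum of length $s_i$, the latter bounded again by $(r_i^2-1)/12$. Summing over $i$ and using the Kawamata--Miyaoka inequality (\ref{kwmt2}) $\sum_i (r_i-1/r_i)\leq 24$ twice:
\[
\sum_i q_i \frac{r_i^2-1}{12} \leq \frac{m}{12}\sum_i \!\Big(r_i-\frac{1}{r_i}\Big) \leq 2m,
\qquad
\sum_i \frac{r_i^2-1}{12} = \frac{1}{12}\sum_i r_i\!\Big(r_i-\frac{1}{r_i}\Big) \leq \frac{r_{\max}}{12}\cdot 24 = 2\,r_{\max},
\]
which yields $l(m+1) \leq 2m + 2\,r_{\max}$.

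Third I would combine the estimates. The two steps above give
\[
P_{-m} - r_X(-K_X^3)m - 2 \;\geq\; \frac{2m}{t} + \frac{(-K_X^3)\,m(3m+1)}{12} - 2\,r_{\max} - 1,
\]
and I would then use the two remaining hypotheses. The bound $m\geq r_{\max}t/3$ converts $r_{\max}$ into $m/t$-terms, while $m\geq 37$ together with the lower bound $(-K_X^3)m^2 \geq 12/t$ (coming again from the square-root hypothesis) makes the quadratic main term $(-K_X^3)m(3m+1)/12$ compete with the remaining $r_{\max}$-contributions. The ``in particular'' claim is then immediate from Corollary \ref{b non-pencil}, since $r_X(-K_X^3)m + 2 > r_X(-K_X^3)m + 1$.

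The main obstacle is the final arithmetic step: the three hypotheses on $m$ must be tuned so that the $2 r_{\max}$ error coming from the partial sums is absorbed cleanly, and this requires applying them in conjunction rather than in isolation. In particular one has to resist the temptation to collapse $(-K_X^3)m(3m+1)/12$ prematurely to $m^2(-K_X^3)/4$, since the full cubic growth together with $m\geq 37$ is what ultimately dominates $r_{\max}$ in the regime where the $\sqrt{6r_X + 12/(t(-K_X^3))}$ bound is not the binding one.
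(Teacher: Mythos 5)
Your overall architecture is the paper's: Reid's formula, an upper bound on $l(-m)=l(m+1)$, and then absorbing the error using the three hypotheses on $m$. But the step you yourself flag as ``the main obstacle'' is a genuine gap, not just an arithmetic chore, because your bound $l(m+1)\leq 2m+2r_{\max}$ is too lossy for the conclusion to follow. After your (correct) reductions, what you need is
$$\frac{2m}{t}+\frac{(-K_X^3)\,m(3m+1)}{12}\;\geq\;2r_{\max}+1,$$
and this does not follow from the hypotheses — in fact it fails for an actual variety: for $X_{66}\subset\mathbb{P}(1,5,6,22,33)$ one has $-K_X^3=\tfrac{1}{330}$, $r_{\max}=11$, $r_X=330$; taking $t=37$ and $m=136$ satisfies all three conditions ($136\geq 37$, $136\geq \tfrac{11\cdot 37}{3}\approx 135.7$, $136\geq\sqrt{1980+\tfrac{12\cdot 330}{37}}\approx 45.7$), yet $\tfrac{2m}{t}+\tfrac{m(3m+1)}{3960}\approx 7.35+14.05=21.4<23=2r_{\max}+1$. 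So your chain of estimates yields a negative lower bound for $P_{-m}-r_X(-K_X^3)m-2$ there, and no tuning of how the three hypotheses are ``applied in conjunction'' can rescue it; the loss is in the $l$-bound itself.

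The missing idea is the paper's refined estimate (\ref{n+r3}): for each point with $r>2$, the incomplete period is bounded not by a full period $\tfrac{r^2-1}{12}$ (your choice) but by $\min\bigl\{\overline{m}\cdot\tfrac{r^2-1}{8r},\,\tfrac{r^2-1}{12}\bigr\}\leq \tfrac{r^2-1}{12r}\cdot\tfrac{r}{3}$, a factor $3$ better, and then the hypothesis $m\geq \tfrac{r_{\max}t}{3}$ converts this additive error into the multiplicative factor $\tfrac{t+1}{t}$, giving (after treating the index-$2$ points separately, where $m\geq t$ is used) $l(m+1)\leq 2m+\tfrac{2m}{t}$. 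This $\tfrac{2m}{t}$ is exactly the quantity produced by the $\tfrac{12}{t(-K_X^3)}$ inside the square root, while the remaining ``$+1$'' comes from $\tfrac{m^2}{4}(-K_X^3)\geq 1$ for $m\geq 37$ and $-K_X^3\geq\tfrac{1}{330}$; the budget is balanced with no slack, which is why the extra $2r_{\max}$ (versus $\tfrac{2m}{t}\geq\tfrac{2r_{\max}}{3}$) in your version cannot be absorbed. To repair your argument you would need to replace the crude full-period tail bound by this min-type bound (or otherwise exploit the relation between $-K_X^3$, $r_X$ and $r_{\max}$, which you never invoke).
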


\begin{proof}%[Proof of Propositon \ref{b thm1}] 
By Reid's formula, there exists a basket of singularities 
$$
B_X=\{(b_i,r_i)\mid i=1,\ldots, s; 0<b_i\leq \frac{r_i}{2}; b_i \mbox{ is coprime to } r_i\}
$$
such that we have the formula 
$$
P_{-n}=\frac{1}{12}n(n+1)(2n+1)(-K_X^3)+2n+1-l(-n)
$$
for any $n>0$, where
$$
l(-n)=\sum_i \sum_{j=1}^n \frac{\overline{jb_i}(r_i-\overline{jb_i})}{2r_i}. 
$$
To estimate the lower bound of $P_{-n}$, we need to bound $l(-n)$ from above. 

For any pair $(b,r)\in B_X$, we have $r\leq 24$ by inequality (\ref{kwmt2}).  In fact, we have the following estimation.
\begin{enumerate}
\item If $r=2$, then $$\frac{\overline{jb}(r-\overline{jb})}{2r}=
\begin{cases}
\frac{1}{4} & \mbox{when}\ j \mbox{ odd};\\
0 & \mbox{when}\  j \mbox{ even}.
\end{cases}
$$

\item If $r$ is odd, then $\frac{\overline{jb}(r-\overline{jb})}{2r}\leq \frac{r^2-1}{8r}$.

\item If $r$ is even and $r>2$, then $$\frac{\overline{jb}(r-\overline{jb})}{2r}\leq \begin{cases}
 \frac{\frac{r-2}{2}\frac{r+2}{2}}{2r}=\frac{r^2-4}{8r} & \mbox{when } \ \overline{jb}\neq r/2;\\
\frac{r^2}{8r} & \mbox{when }  \overline{jb}= r/2.
\end{cases}
$$
Clearly, $b \neq r/2$ under the same situation. Since $\overline{jb}= r/2$ and $\overline{(j-1)b}= r/2$ can not hold simultaneously, we have
$$\frac{\overline{(j-1)b}(r-\overline{(j-1)b})}{2r}+\frac{\overline{jb}(r-\overline{jb})}{2r}\leq \frac{r^2-4}{8r}+ \frac{r^2}{8r} \leq  \frac{2\cdot(r^2-1)}{8r}.$$
Hence, when $r$ is even and $r>2$,  we have
\begin{equation}
\sum_{j=1}^{n} \frac{\overline{jb}(r-\overline{jb})}{2r}\leq n \cdot \frac{r^2-1}{8r}.
\label{summ}\end{equation}
\end{enumerate}
By the way, inequality (\ref{summ}) also holds  when $r$ is odd.

Recall that we have 
$$
\sum_{j=1}^{r} \frac{\overline{jb}(r-\overline{jb})}{2r}=\frac{r^2-1}{12}.
$$
Hence, whenever $r>2$ and $n\geq \frac{r_{\text{max}}t}{3}$, we have
\begin{align}
\sum_{j=1}^n \frac{\overline{jb}(r-\overline{jb})}{2r}
{}&=\Big\lfloor \frac{n}{r}\Big\rfloor\frac{r^2-1}{12}+ \sum_{j=1}^{\overline{n}} \frac{\overline{jb}(r-\overline{jb})}{2r}\notag\\
{}&\leq \Big\lfloor \frac{n}{r}\Big\rfloor\frac{r^2-1}{12}+ {\rm min}\Big\{ \overline{n} \cdot\frac{r^2-1}{8r}, \frac{r^2-1}{12}\Big\}\notag\\
{}&\leq \frac{r^2-1}{12r}\big(n+\frac{r}{3}\big)\label{n+r3}\\
{}&\leq \frac{r^2-1}{12r}\cdot \frac{(t+1)n}{t}.\notag
\end{align}
We prove the second inequality here. Assume, to the contrary, that  
\begin{align}\label{b r31}
\Big\lfloor \frac{n}{r}\Big\rfloor\frac{r^2-1}{12}+ \overline{n} \cdot \frac{r^2-1}{8r} >\frac{r^2-1}{12r}\big(n+\frac{r}{3}\big),
\end{align}
and
\begin{align}\label{b r32}
\Big\lfloor \frac{n}{r}\Big\rfloor\frac{r^2-1}{12}+ \frac{r^2-1}{12}>\frac{r^2-1}{12r}\big(n+\frac{r}{3}\big).
\end{align}
Inequality (\ref{b r31}) implies
$
\overline{n}>\frac{2r}{3}.
$
But from inequality (\ref{b r32}), we have 
$
\overline{n}<\frac{2r}{3},
$
a contradiction.

Since $X$ is weak $\bQ$-Fano, recall that we have inequality 
\begin{align*}
\sum_i\big(r_i-\frac{1}{r_i}\big)\leq 24
\end{align*}
by inequality (\ref{kwmt2}). 
Denote by $N_2$ the number of $r_i=2$ in $B_X$. 
Then, if $n \geq \frac{r_{\text{max}}t}{3}$,
\begin{align*}
l(-n){}&=\sum_i \sum_{j=1}^n \frac{\overline{jb_i}(r_i-\overline{jb_i})}{2r_i}\\
{}&=\frac{N_2}{4}\Big\lfloor\frac{n+1}{2}\Big\rfloor+ \sum_{r_i>2} \sum_{j=1}^n \frac{\overline{jb_i}(r_i-\overline{jb_i})}{2r_i}\\
{}&\leq \frac{N_2}{4}\Big\lfloor\frac{n+1}{2}\Big\rfloor+ \frac{(t+1)n}{t}\sum_{r_i>2} \frac{r_i^2-1}{12r_i}\\
{}&\leq \frac{N_2}{4}\Big\lfloor\frac{n+1}{2}\Big\rfloor+ \frac{(t+1)n}{t}\cdot \frac{24-\frac{3}{2}N_2}{12}\\
{}&\leq \frac{2(t+1)n}{t}-N_2\Big(\frac{(t+1)n}{8t}-\frac{1}{4}\Big\lfloor\frac{n+1}{2}\Big\rfloor\Big)\\
{}& \leq \frac{2(t+1)n}{t}
\end{align*}
where $\frac{(t+1)n}{8t}-\frac{1}{4}\lfloor\frac{n+1}{2}\rfloor\geq 0$ whenever $n\geq t$. 
Hence 
\begin{align*}
P_{-n}={}&\frac{1}{12}n(n+1)(2n+1)(-K_X^3)+2n+1-l(-n)\\
\geq{}& \frac{1}{6}n^3(-K_X^3)+\frac{n^2}{4}(-K_X^3)+1-\frac{2n}{t}.
\end{align*}
By \cite{CC}, $-K_X^3\geq \frac{1}{330}$. Hence $\frac{n^2}{4}(-K_X^3)\geq 1$ if $n\geq 37$.
If $m\geq \sqrt{6r_X+\frac{12}{t(-K_X^3)}}$, then 
\begin{align*}
P_{-m}\geq {}&\frac{1}{6}m^3(-K_X^3)+2-\frac{2m}{t}\\
\geq{}&\frac{1}{6}\Big(6r_X+\frac{12}{t(-K_X^3)}\Big)m(-K_X^3)+2-\frac{2m}{t}\\
={}&r_X (-K_X^3)m+2.
\end{align*}
We complete the proof. 
\end{proof}

In practice, we will take a suitable $t$ to apply Proposition \ref{b thm1}. Note that $r_{\text{max}}\leq 24$.

\begin{prop}\label{b thm2}
Let $X$ be a weak $\mathbb{Q}$-Fano $3$-fold. 
\begin{itemize}
\item[(i)] If $r_X\leq 660$, then $\sqrt{6r_X+\frac{3}{2(-K_X^3)}}<67$. In particular, $P_{-m}\geq r_X (-K_X^3)m+2$
 for $m\geq 67$.
\item[(ii)] If $r_X>660$, then $r_X=840$, and $P_{-m}\geq r_X (-K_X^3)m+2$ for $m\geq 71$.
\end{itemize}
\end{prop}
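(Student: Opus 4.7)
My plan is to combine Proposition \ref{b thm1}, the lower bound $-K_X^3 \geq 1/330$ from \cite{CC}, and --- for the sporadic case $r_X = 840$ --- a direct evaluation of $l(-71)$ via Reid's formula.

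For (i), I take $t = 8$ in Proposition \ref{b thm1}. Using $r_X \leq 660$ and $-K_X^3 \geq 1/330$, a direct computation gives
$$6r_X + \frac{3}{2(-K_X^3)} \leq 6\cdot 660 + \frac{3\cdot 330}{2} = 3960 + 495 = 4455 < 67^2 = 4489,$$
which yields the first assertion. Since the other two arguments of the maximum in Proposition \ref{b thm1}, namely $37$ and $r_{\max} t/3 \leq 24\cdot 8/3 = 64$, are also $\leq 67$, the conclusion $P_{-m} \geq r_X(-K_X^3)m + 2$ for $m\geq 67$ follows at once.

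For (ii), Proposition \ref{bound index} forces $r_X = 840$ and restricts $B_X$ to either $\{(1,3), (b_2,5), (b_3,7), (b_4,8)\}$ or $\{(1,2),(1,3), (b_2,5), (b_3,7), (b_4,8)\}$; in particular $r_{\max} = 8$. A direct substitution into Proposition \ref{b thm1} with $-K_X^3 \geq 1/330$ narrowly misses $m = 71$, so I instead rearrange Reid's formula into
$$P_{-71} - 840(-K_X^3)\cdot 71 \;=\; 1278(-K_X^3) + 143 - l(-71),$$
where I have used $71\cdot 72\cdot 143/12 - 840\cdot 71 = 1278$. It then suffices to prove $l(-71) \leq 141 + 1278(-K_X^3)$. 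For each $r\in\{2,3,5,7,8\}$, the partial sum $\sum_{j=1}^{71}\overline{jb}(r-\overline{jb})/(2r)$ is evaluated using the identity $\sum_{j=1}^{r}\overline{jb}(r-\overline{jb})/(2r)=(r^2-1)/12$ together with the residues $71 = 23\cdot 3 + 2 = 14\cdot 5 + 1 = 10\cdot 7 + 1 = 8\cdot 8 + 7$, giving uniform upper bounds $9,\,16,\,28.6,\,40.857,\,47.25$ over all admissible $b$. Summing these for the heavier basket yields $l(-71) \leq 141.71$, and since $1278/330 > 3.8$, the required inequality holds at $m = 71$.

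To propagate to every $m \geq 71$, I would verify that $P_{-m} - P_{-(m-1)} \geq 840(-K_X^3)$ for all such $m$. From
$$P_{-m} - P_{-(m-1)} = \tfrac{m^2}{2}(-K_X^3) + 2 - \sum_i \frac{\overline{mb_i}(r_i - \overline{mb_i})}{2r_i}$$
and the trivial estimate $\overline{mb_i}(r_i - \overline{mb_i})/(2r_i) \leq r_i/8$, one obtains $P_{-m} - P_{-(m-1)} \geq \tfrac{m^2}{2}(-K_X^3) - 9/8$, which exceeds $840(-K_X^3)$ for $m\geq 71$ thanks again to $-K_X^3 \geq 1/330$. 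Thus $m \mapsto P_{-m} - 840(-K_X^3) m - 2$ is nondecreasing for $m\geq 71$, and its nonnegativity at $m=71$ carries over. The main obstacle is the extreme tightness of the base case: the slack $71^2 - 6\cdot 840 = 1$ is as small as possible, so the explicit evaluation of $l(-71)$ --- in particular the sharp bound $l_8 = 47.25$ coming from $71\equiv 7\pmod 8$ --- is indispensable, and any cruder estimate would force $m$ up to $72$ or beyond.
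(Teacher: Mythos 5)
Your part (i) coincides with the paper's own argument: both take $t=8$ in Proposition \ref{b thm1} and use $-K_X^3\geq\frac{1}{330}$ to get $6r_X+\frac{3}{2(-K_X^3)}\leq 3960+495=4455<67^2$, with $37$ and $\frac{8r_{\max}}{3}\leq 64$ below $67$. For part (ii) your argument is correct but takes a genuinely different route. You are right that Proposition \ref{b thm1} alone cannot reach $m=71$ (any admissible $t\leq 37$ gives $\sqrt{5040+3960/t}>71$). The paper handles this not by a base-case-plus-induction scheme but by re-running the estimate with the refined per-point bound $\sum_{j=1}^{n}\frac{\overline{jb}(r-\overline{jb})}{2r}\leq\frac{r^2-1}{12r}\big(n+\frac{r}{3}\big)$ summed over the explicit configurations $\mathcal{R}=(3,5,7,8)$ and $(2,3,5,7,8)$, which yields the uniform linear bound $l(-n)\leq\frac{19907n}{10080}+\frac{295}{72}\leq 2n+\frac{7}{3}$ for $n\geq 71$; hence $P_{-n}\geq\frac{1}{6}n^3(-K_X^3)+\big(\frac{n^2}{4}(-K_X^3)-\frac{10}{3}\big)+2$, and since $\frac{n^2}{4}(-K_X^3)\geq\frac{10}{3}$ for $n\geq 71$ and $71>\sqrt{6\cdot 840}$, the inequality $P_{-m}\geq r_X(-K_X^3)m+2$ follows for all $m\geq 71$ in one stroke. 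You instead nail the single value $m=71$ by an essentially exact evaluation of $l(-71)$ (your periodwise sums $9$, $16$, $28.6$, $40.857$, $47.25$ and the identity $P_{-71}-840(-K_X^3)\cdot 71=1278(-K_X^3)+143-l(-71)$ are correct) and then propagate via $P_{-m}-P_{-(m-1)}=\frac{m^2}{2}(-K_X^3)+2-\sum_i\frac{\overline{mb_i}(r_i-\overline{mb_i})}{2r_i}\geq\frac{m^2}{2}(-K_X^3)-\frac{9}{8}\geq 840(-K_X^3)$, which is a valid two-step argument. The trade-off: your computation is sharper at the critical value but needs the monotonicity step, while the paper's termwise cruder bound on $l(-n)$ succeeds uniformly because it retains the quadratic term $\frac{n^2}{4}(-K_X^3)$ as a buffer; so your closing claim that any cruder estimate of $l(-71)$ would force $m\geq 72$ is overstated, though it does not affect the correctness of your proof.
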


\begin{proof}%[Proof of Proposition \ref{b thm2}]
Statement (i) is clear since $-K_X^3\geq \frac{1}{330}$ by \cite{CC} and take $t=8$ in Proposition \ref{b thm1}.
%\begin{remark}
%Actually we can prove that
%if $r_X\leq 660$, then for $m\geq 64$, $P_{-m}\geq r_X (-K_X^3)m+2$ by computation. 
%\end{remark}
We mainly prove (ii) here. 

{}First of all, by Proposition \ref{bound index}, $r_X=840$ and $\mathcal{R}=(3,5,7,8)$ or $(2,3,5,7,8)$.

For $r>2$, we use the inequality (\ref{n+r3}) (in the proof of Proposition \ref{b thm1}) that
$$
\sum_{j=1}^n \frac{\overline{jb}(r-\overline{jb})}{2r}\leq \frac{r^2-1}{12r}\big(n+\frac{r}{3}\big).
$$
Then
\begin{align*}
l(-n)={}&\sum_i \sum_{j=1}^n \frac{\overline{jb_i}(r_i-\overline{jb_i})}{2r_i}\\
\leq {}&\frac{N_2}{4}\Big\lfloor\frac{n+1}{2}\Big\rfloor+ \sum_{r_i>2} \frac{r_i^2-1}{12r_i}\big(n+\frac{r_i}{3}\big)\\
\leq {}&\frac{n+1}{8}+ \frac{3^2-1}{12\cdot 3}(n+1)+ \frac{5^2-1}{12\cdot 5}\big(n+\frac{5}{3}\big)\\
{}&+\frac{7^2-1}{12\cdot 7}\big(n+\frac{7}{3}\big)+
\frac{8^2-1}{12\cdot 8}\big(n+\frac{8}{3}\big)\\
={}&\frac{19907n}{10080}+\frac{295}{72}\\
\leq {}&2n+\frac{7}{3}
\end{align*}
as long as $n\geq 71$.

Hence 
\begin{align*}
P_{-n}={}&\frac{1}{12}n(n+1)(2n+1)(-K_X^3)+2n+1-l(-n)\\
\geq{}& \frac{1}{6}n^3(-K_X^3)+\big(\frac{n^2}{4}(-K_X^3)-\frac{10}{3}\big)+2.
\end{align*}
By \cite{CC}, $-K_X^3\geq \frac{1}{330}$. Hence $\frac{n^2}{4}(-K_X^3)\geq \frac{10}{3}$ whenever  $n\geq 71$.
If $m\geq 71 > \sqrt{6r_X}$, then
\begin{align*}
P_{-m}\geq{}& \frac{1}{6}m^3(-K_X^3)+2\\
\geq{}&\frac{1}{6}(6r_X)m(-K_X^3)+2\\
={}&r_X (-K_X^3)m+2.
\end{align*}
We finish the proof. 
\end{proof}

Theorem \ref{main2} directly follows from Corollary \ref{b non-pencil} and Proposition \ref{b thm2}. 

%%%%%%%%%%%
\section{\bf Birationality}\label{section birationality}

In this section, we consider the birationality of anti-pluricanonical maps $\varphi_{-m}$.

\subsection{Main reduction}\
 
In this subsection, we reduce the birationality problem on $X$ to that on $Y$.

\begin{lem}[{cf. \cite[Lemma 2.5]{C}}]\label{b Hn} Let $W$ be a normal projective variety on which there is
an integral Weil $\bQ$-Cartier divisor $D$. Let $h: V\longrightarrow
W$ be any resolution of singularities. Assume that $E$ is an
effective exceptional $\bQ$-divisor on $V$ with $h^*(D)+E$ a Cartier
divisor on $V$. Then
$$h_*\OO_V(h^*(D)+E)=\OO_W(D)$$
where $\OO_W(D)$ is the reflexive sheaf corresponding to the Weil
divisor $D$.
\end{lem}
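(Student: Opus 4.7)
The strategy is to exhibit both sheaves as subsheaves of the constant sheaf of rational functions on $W$ and to show that they agree on the smooth locus $W_0\subseteq W$; since $W\setminus W_0$ has codimension $\geq 2$ by normality of $W$, the equality then propagates to all of $W$ because effectiveness of a Weil divisor on a normal variety is determined at codimension-one points.

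First I would observe that on $h^{-1}(W_0)\subseteq V$ the morphism $h$ is an isomorphism onto $W_0$ and the $h$-exceptional effective divisor $E$ vanishes, so $\OO_V(h^*D+E)|_{h^{-1}(W_0)}\cong \OO_{W_0}(D|_{W_0})$ and hence $(h_*\OO_V(h^*D+E))|_{W_0}=\OO_W(D)|_{W_0}$. To extend this identification globally, I would verify for any open $U\subseteq W$ that
\begin{align*}
\Gamma(U,\OO_W(D))&=\{f\in k(W):(f)+D\geq 0\text{ on }U\},\\
\Gamma(U,h_*\OO_V(h^*D+E))&=\{g\in k(W):(g)+h^*D+E\geq 0\text{ on }h^{-1}(U)\}
\end{align*}
(using $k(V)=k(W)$), and that these two subsets of $k(W)$ coincide. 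The forward inclusion sends $f$ to itself viewed on $V$: choosing $n$ with $nD$ Cartier, the effective Cartier divisor $n(f)+nD$ on $U$ pulls back to the effective Cartier divisor $n(f)+h^*(nD)$ on $h^{-1}(U)$, so dividing by $n$ and adding $E\geq 0$ yields $(f)+h^*D+E\geq 0$, an integer-coefficient effective divisor since $h^*D+E$ is Cartier. Conversely, restricting the inequality $(g)+h^*D+E\geq 0$ to $h^{-1}(U\cap W_0)$, where $E=0$ and $h$ is an isomorphism, yields $(g)+D\geq 0$ on $U\cap W_0$; since $U\setminus W_0$ has codimension $\geq 2$ in $U$, this extends to all of $U$.

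The only real technical point is the compatibility of $\bQ$-divisor pullback with effectiveness, which is handled by passing to the Cartier multiple $nD$ and invoking the standard fact that pullback of an effective Cartier divisor under a birational morphism is effective. After this is dealt with, the proof reduces to a comparison of sections on an open subset whose complement has codimension $\geq 2$.
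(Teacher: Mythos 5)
Your overall strategy -- viewing both sheaves as subsheaves of the constant sheaf $k(W)=k(V)$, comparing sections over each open $U$, handling the $\bQ$-Cartier pullback by passing to a Cartier multiple $nD$, and using that effectivity of a Weil divisor on a normal variety is detected at codimension-one points -- is the standard proof of this lemma; the paper does not reprove it but cites \cite[Lemma 2.5]{C}, whose argument is of exactly this kind. Your forward inclusion is fine as written: $n(f)+nD$ is an effective Cartier divisor on $U$, its pullback is effective, dividing by $n$ and adding $E\geq 0$ gives $(f)+h^*D+E\geq 0$, which is integral because $h^*D+E$ is Cartier.

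The one inaccurate point is your identification of the good locus with the smooth locus $W_0$. For an arbitrary resolution $h$ it is not true that $h$ is an isomorphism over $W_0$, nor that $E$ vanishes on $h^{-1}(W_0)$: for instance $h$ may factor through the blow-up of a curve lying inside the smooth locus of a threefold $W$, and then an $h$-exceptional prime divisor dominates a curve contained in $W_0$. The repair is immediate: let $W_1\subseteq W$ be the largest open subset over which $h$ is an isomorphism. Since $h$ is proper and birational and $W$ is normal, the inverse rational map is defined at every codimension-one point of $W$, so $\mathrm{codim}_W(W\setminus W_1)\geq 2$; and every component of the exceptional divisor $E$ is contracted by $h$, hence $\mathrm{Supp}(E)\cap h^{-1}(W_1)=\emptyset$. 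With $W_1$ in place of $W_0$ your reverse inclusion reads verbatim: restricting $(g)+h^*D+E\geq 0$ to $h^{-1}(U\cap W_1)$ gives $(g)+D\geq 0$ on $U\cap W_1$, and this propagates to all of $U$ because $U\setminus W_1$ has codimension $\geq 2$. (Equivalently, argue prime divisor by prime divisor: for each prime divisor $\Gamma$ of $W$, $h$ is an isomorphism near the generic point of $\Gamma$ and no component of $E$ dominates $\Gamma$, so $\mathrm{mult}_\Gamma((g)+D)=\mathrm{mult}_{\Gamma'}((g)+h^*D+E)\geq 0$, where $\Gamma'$ is the strict transform.) With this adjustment the proof is complete and agrees with the cited argument.
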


\begin{lem}[{cf. \cite[2.6]{C}}]\label{b reduction}
Let $X$ be a weak $\bQ$-Fano 3-fold and 
$\pi:Y\longrightarrow X$ the same resolution as in Subsection \ref{b setting}. Then, for any $m>0$,  
$\varphi_{-m}$ is birational if and only if so is
$\Phi_{|K_Y+\roundup{(m+1)\pi^*(-K_X)}|}$.
\end{lem}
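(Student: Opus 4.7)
The plan is to show that the linear systems $|-mK_X|$ on $X$ and $|K_Y+\lceil (m+1)\pi^*(-K_X)\rceil|$ on $Y$ are canonically identified via $\pi_*$, so that the corresponding rational maps agree up to composition with the birational morphism $\pi$. Since $\pi$ itself is birational, this forces the two maps to be birational simultaneously.

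First, since $X$ has at worst terminal singularities and $\pi\colon Y\to X$ is a resolution as in Subsection \ref{b setting}, I would write
\[
K_Y=\pi^*K_X+E_\pi,
\]
where $E_\pi$ is an effective $\pi$-exceptional $\bQ$-divisor. Because $-K_X$ is a Weil $\bQ$-Cartier divisor, $\pi^*(-K_X)$ is a well-defined $\bQ$-divisor whose possibly fractional coefficients occur only along $\pi$-exceptional components; hence the ceiling defect
\[
E'_m:=\lceil (m+1)\pi^*(-K_X)\rceil-(m+1)\pi^*(-K_X)
\]
is an effective $\pi$-exceptional $\bQ$-divisor with coefficients in $[0,1)$. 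Combining these two identities gives
\[
K_Y+\lceil (m+1)\pi^*(-K_X)\rceil = \pi^*(-mK_X)+E_\pi+E'_m,
\]
and the left-hand side, being the sum of the canonical (Cartier) divisor on a smooth $Y$ and a round-up, is a Cartier divisor.

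Next, I would invoke Lemma \ref{b Hn} with $D=-mK_X$ and $E=E_\pi+E'_m$, which is an effective exceptional $\bQ$-divisor making $\pi^*(D)+E$ Cartier. This yields
\[
\pi_*\OO_Y\bigl(K_Y+\lceil (m+1)\pi^*(-K_X)\rceil\bigr)=\OO_X(-mK_X),
\]
so pushforward of sections induces a natural isomorphism
\[
H^0\bigl(Y,K_Y+\lceil (m+1)\pi^*(-K_X)\rceil\bigr)\cong H^0\bigl(X,-mK_X\bigr).
\]
Under this isomorphism, a section of $\OO_X(-mK_X)$ pulls back to a section of $\OO_Y(\pi^*(-mK_X)+E_\pi+E'_m)$ by multiplication with the canonical section of $\OO_Y(E_\pi+E'_m)$. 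Therefore the two complete linear systems define the same rational map up to composition with $\pi$; more precisely,
\[
\Phi_{|K_Y+\lceil(m+1)\pi^*(-K_X)\rceil|}=\varphi_{-m}\circ\pi
\]
as rational maps. Since $\pi$ is birational, one side is birational onto its image precisely when the other is.

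The only mildly delicate point is verifying that the extra divisor $E_\pi+E'_m$ is exceptional and effective so that Lemma \ref{b Hn} applies cleanly; this is automatic from terminality of $X$ and from the fact that $-K_X$ is a Weil divisor, so the proof is essentially a bookkeeping exercise. No genuine obstacle arises.
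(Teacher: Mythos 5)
Your proposal is correct and follows essentially the same route as the paper: the same decomposition $K_Y+\roundup{(m+1)\pi^*(-K_X)}=\pi^*(-mK_X)+E_\pi+E_{m+1}$ with the effective exceptional ceiling defect, followed by an application of Lemma \ref{b Hn} to get $\pi_*\OO_Y(K_Y+\roundup{(m+1)\pi^*(-K_X)})=\OO_X(-mK_X)$ and hence the identification of the two linear systems up to composition with the birational morphism $\pi$. No issues.
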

\begin{proof} Recall that
$$K_{Y}=\pi^*(K_X)+E_{\pi}$$ 
where $E_{\pi}$ is an effective $\bQ$-Cartier $\bQ$-divisor since $X$ has at worst terminal singularities.
We have
\begin{align*}
{}& K_Y+\roundup{(m+1)\pi^*(-K_X)}\\
={}& \pi^*(K_X)+ E_{\pi}+\pi^*(-(m+1)K_X)+E_{m+1}\\
={}& \pi^*(-mK_X)+ E_{\pi}+E_{m+1}
\end{align*}
where $E_{\pi}+E_{m+1}$ is an effective $\bQ$-divisor on $Y$ exceptional over $X$. Lemma \ref{b Hn}
implies $$\pi_*\OO_Y(K_Y+\roundup{(m+1)\pi^*(-K_X)})=\OO_X(-mK_X).$$
Hence $\varphi_{-m}$ is birational if and only if so is
$\Phi_{|K_Y+\roundup{(m+1)\pi^*(-K_X)}|}$.
\end{proof}
Noting that
\begin{align*}
H^0(\OO_X(-mK_X))\cong{}& H^0(\OO_Y(\rounddown{-m\pi^*(K_X)}))\\
\cong {}&H^0(\OO_Y(K_Y+\roundup{(m+1)\pi^*(-K_X)})), 
\end{align*}
we denote by $|M_{-m}|$ the movable part of
$|\rounddown{-m\pi^*(K_X)}|$. We have the equality:
\begin{align}\label{b 2.1}
-m\pi^*(K_X)=M_{-m}+F_{m} 
\end{align}
where $F_m$ is an effective $\bQ$-divisor. 
Another direct consequence is that we may write:
$$K_Y+\roundup{(m+1)\pi^*(-K_X)}\sim M_{-m}+N_{-m}$$ where $N_{-m}$
is the fixed part of $|K_Y+\roundup{(m+1)\pi^*(-K_X)}|$.

\subsection{Key theorem}\label{b keythm}\ 

Let $X$ be a weak $\bQ$-Fano 3-fold on which $P_{-m_0}\geq
2$ for some integer $m_0>0$.  Suppose that $m_1\geq m_0$ is another integer with $P_{-m_1}\geq 2$ and that $|-m_1K_X|$ and $|-m_0K_X|$ are not composed with the same pencil.  %Take $m_1\geq m_0$ such that either $|-m_1K_X|$ is not composed of a pencil or  $|-m_1K_X|$ and $|-m_0K_X|$  are not composed of the same pencil. 
%By definition, if $m_0\leq \delta_1 (X)$, then $m_1\leq \delta_1(X)$.
%Define 
%$$\mu_0:=\min\left\{\left.\frac{m}{\iota(m)}\right| P_{-m}\geq 2, P_{-(m-m_0)}>0, m_0\leq m\leq m_1\right\}. $$ 
Recall that, for any $m>0$ with $P_{-m}>1$, 
$$\iota(m)=\begin{cases} 1, & \text{if } |-mK_X| \text{ is not composed with a pencil}; \\
P_{-m}-1, & \text{if }|-mK_X| \text{ is composed with a pencil}.
\end{cases}$$
%Now take $k_0$ such that $P_{-(k_0-m_0)}>0, m_0\leq k_0\leq m_1$ and $\mu_0:=\frac{k_0}{\iota(k_0)}\leq \frac{m_0}{\iota(m_0)}$. 

Set $D:=-m_0K_X$ and keep the same notation as in Subsection \ref{b setting}. We may modify the resolution $\pi$ in Subsection \ref{b setting} such
that the movable part $|M_{-m}|$ of $|\rounddown{\pi^*(-mK_X)}|$ is base point free for all  $m_0\leq m\leq m_1$. 
%If $|-m_0K_X|$ is not composed of a pencil, then $|-k_0K_X|$ is not composed of a pencil by $P_{-(k_0-m_0)}>0$. Hence 
%$$
%m_0\geq \frac{m_0}{\iota(m_0)}\geq \mu_0=\frac{k_0}{\iota(k_0)}=k_0,
%$$
%which implies that $m_0=k_0$. If $|-m_0K_X|$ is composed of a pencil, then 
%$$m_0\geq \frac{m_0}{\iota(m_0)}\geq \frac{k_0}{\iota(k_0)},$$
%which implies that either $k_0=m_0$ or $\iota(k_0)>1$. Hence $|-k_0K_X|$ is composed of a pencil. By Lemma \ref{pencils} and $P_{-(k_0-m_0)}>0$, $|-k_0K_X|$ gives the same fibration  $Y\rightarrow \mathbb{P}^1$ as $|-m_0K_X|$. In summary, $|-k_0K_X|$ and $|-m_0K_X|$ define the same structure on $Y$. In particular, they share the same generic irreducible elements. Actually, in most situation, we just take $k_0=m_0$.
Pick a generic irreducible element $S$ of $|M_{-m_0}|$.  %(which is also a generic irreducible element of $|M_{-k_0}|$ by construction). 
By equality (\ref{b 2.1}), we have 
$$m_0\pi^*(-K_X)=\iota(m_0)S+F_{m_0}$$ for some effective
$\bQ$-divisor $F_{m_0}$. In particular, we see that 
$$\frac{m_0}{\iota(m_0)}\pi^*(-K_X)-S\sim_{\bQ}  \text{effective}\ \bQ\text{-divisor}.$$
Define the real number 
$$\mu_0=\mu_0(|S|):=\text{inf}\{t\in \bQ^+ \mid t\pi^*(-K_X)-S\sim_{\bQ} \text{effective}\ \bQ\text{-divisor}\}.$$

\begin{remark}\label{upper mu0}Clearly, we have $0<\mu_0\leq \frac{m_0}{\iota(m_0)}$. 
If $|-m_0K_X|$ is composed with a pencil, for all $k$ such that $|-kK_X|\succeq |-m_0K_X|$ and $|-kK_X|$ is also composed with a pencil, we have 
$$k\pi^*(-K_X)=\iota(k)S+F_{k}$$ for some effective
$\bQ$-divisor $F_{k}$ by Lemma \ref{pencils}, and hence $\mu_0\leq  \frac{k}{\iota(k)}$.
\end{remark}

By the assumption on $|-m_1K_X|$, we know that $|G|=|{M_{-m_1}}|_{S}|$ is a base point free linear system on $S$ and $h^0(S, G)\geq 2$. Denote by
$C$ a generic irreducible element of $|G|$. Since $m_1\pi^*(-K_X)\geq M_{-m_1}$, we have
$$m_1\pi^*(-K_X)|_S\equiv C+H$$ 
where $H$ is an effective
$\bQ$-divisor on $S$. 

We define two numbers which will be the key
invariants accounting for the birationality of $\varphi_{-m}$.
They are
\begin{align*}
\zeta{}&:=(\pi^*(-K_X)\cdot C)_Y=(\pi^*(-K_X)|_S\cdot C)_S\ \text{and}\\
\varepsilon(m){}&:=(m+1-\mu_0-m_1)\zeta.
\end{align*}
Note that $\zeta$ and $\varepsilon(m)$ are invariants under taking higher model of the resolution $Y$ by projection formula. Hence we can modify $\pi$ if necessary. 
%where $\iota$ is defined in Subsection \ref{b setting} so that $M_{-m_0}\equiv \iota S$.

While studying the birationality of $\varphi_{-m}$, we always need to check that the linear system
$\Lambda_m:=|K_Y+\roundup{(m+1)\pi^*(-K_X)}|$ satisfies the
following assumption for some integer $m>0$.

\begin{assum}\label{b asum}Keep the notation as above.
\begin{itemize}
\item [(1)] The linear system $\Lambda_m$
distinguishes different generic irreducible elements of $|M_{-m_0}|$
(namely, $\Phi_{\Lambda_m}(S')\neq \Phi_{\Lambda_m}(S'')$ for two
different generic irreducible elements $S'$, $S''$ of $|M_{-m_0}|$).

\item [(2)] The linear system ${\Lambda_m}|_{S}$ distinguishes different generic irreducible elements of the linear system $|G|=|{M_{-m_1}}|_{S}|$ on
$S$.
\end{itemize}
\end{assum}

The following is the key theorem in this section.

\begin{thm}[{cf. \cite[Theorem 3.5]{C}}]\label{b mb} Let $X$ be a weak $\bQ$-Fano
3-fold. Keep the notation as above. Let $m>0$ be an integer. If
Assumption \ref{b asum} is satisfied and $\varepsilon(m)>2$, then
$\varphi_{-m}$ is birational onto its image.
\end{thm}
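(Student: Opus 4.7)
The plan is to use Lemma \ref{b reduction} to reduce the birationality of $\varphi_{-m}$ to showing that the linear system $\Lambda_m=|K_Y+\roundup{(m+1)\pi^*(-K_X)}|$ defines a birational map on $Y$. By the classical three-step descent $Y\rightsquigarrow S\rightsquigarrow C\rightsquigarrow\{p,q\}$, it suffices to check three things: (a) $\Lambda_m$ separates two distinct generic elements of $|M_{-m_0}|$; (b) for a general $S$, $\Lambda_m|_S$ separates two distinct generic elements of $|G|$; and (c) for a general $C$, $\Lambda_m|_C$ separates two general points $p,q\in C$. Conditions (a) and (b) are exactly Assumption \ref{b asum}(1) and (2), so the whole proof reduces to (c), provided the restrictions $H^0(Y,\Lambda_m)\twoheadrightarrow H^0(S,\Lambda_m|_S)\twoheadrightarrow H^0(C,\Lambda_m|_C)$ can be shown to be surjective.

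For the first surjection, I would use the definition of $\mu_0$: for any rational $\epsilon>0$, one has
\[
(m+1)\pi^*(-K_X)-S\;\sim_{\bQ}\;(m+1-\mu_0-\epsilon)\pi^*(-K_X)+E_{\epsilon}
\]
with $E_{\epsilon}$ effective. Since $\varepsilon(m)>2$ forces $m+1-\mu_0>m_1+2/\zeta>0$, the first summand is nef and big. After refining $\pi$ so that all fractional parts sit in a simple normal crossings divisor, Kawamata--Viehweg vanishing yields $H^1(Y,K_Y+\roundup{(m+1)\pi^*(-K_X)}-S)=0$, hence surjective restriction to $S$. The same strategy works on the surface $S$: subtract $C$ from $\Lambda_m|_S$, use $m_1\pi^*(-K_X)|_S\equiv C+H$ with $H\geq 0$ to write the remaining class as $K_S$ plus a nef-and-big $\bQ$-divisor with SNC fractional support, and apply Kawamata--Viehweg on $S$ to get the surjective restriction onto $C$.

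Finally, the line bundle delivered on the smooth curve $C$ after these two restrictions has the form $K_C+N$ where, by a direct computation using $m_1\pi^*(-K_X)|_S\equiv C+H$ and the adjunction $K_C=(K_S+C)|_C$,
\[
\deg N\;\geq\;(m+1-\mu_0-\epsilon-m_1)\zeta+H\cdot C\;>\;2
\]
for $\epsilon$ small, by $\varepsilon(m)>2$. For any two distinct points $p,q\in C$, Serre duality gives $h^1(C,N-p-q)=h^0(C,p+q-N)=0$ since $\deg N>2$, and Riemann--Roch forces $h^0(K_C+N)-h^0(K_C+N-p-q)=2$, so $|K_C+N|$ separates $p$ from $q$. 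Chaining the three surjections then yields that $\Lambda_m$ is birational and, via Lemma \ref{b reduction}, that $\varphi_{-m}$ is birational onto its image. The main obstacle is the meticulous bookkeeping of fractional round-ups through the two Kawamata--Viehweg steps, in particular verifying that the integer divisor induced on $C$ from $\roundup{(m+1)\pi^*(-K_X)}$ on $Y$ really has degree exceeding $2$; this is precisely where the hypothesis $\varepsilon(m)>2$ is used and is sharp.
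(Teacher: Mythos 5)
Your overall architecture (reduce via Lemma \ref{b reduction}, descend $Y\to S\to C$ using Assumption \ref{b asum}, then get degree $>2$ on $C$ from $\varepsilon(m)>2$) is the same as the paper's, and your endgame on the curve is fine. But there is a genuine gap in both vanishing steps. You apply Kawamata--Viehweg to conclude $H^1(Y,K_Y+\roundup{(m+1)\pi^*(-K_X)}-S)=0$, yet the divisor $(m+1)\pi^*(-K_X)-S$ is only $\bQ$-linearly equivalent to (nef and big) $+$ (effective), i.e.\ it is big but in general \emph{not} nef: for instance on a $\pi$-exceptional curve $\Gamma$ one has $\pi^*(-K_X)\cdot\Gamma=0$ while $S\cdot\Gamma$ may be positive, so the difference is negative on $\Gamma$. ``Nef and big plus effective'' does not yield Kawamata--Viehweg vanishing (the effective part $E_{\epsilon}$ can have coefficients $\geq 1$ and cannot be absorbed as a klt boundary), so the surjection $H^0(Y,\Lambda_m)\twoheadrightarrow H^0(S,\Lambda_m|_S)$ you assert is unproven; the same problem recurs on $S$, where after subtracting $C$ the class is $K_S$ plus (nef and big) plus the effective $H$, and you cannot simply discard $H$.

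The paper's proof exists precisely to circumvent this: it does not restrict the full system $\Lambda_m$, but a sub-linear system obtained by moving the offending effective pieces \emph{inside the round-up before} applying vanishing. Concretely, one chooses a rational $\mu_0^{(n)}$ slightly above $\mu_0$ with $\mu_0^{(n)}\pi^*(-K_X)\sim_{\bQ}S+E^{(n)}$, $E^{(n)}$ effective with SNC support, and works with $|K_Y+\roundup{(m+1)\pi^*(-K_X)-E^{(n)}}|\preceq\Lambda_m$; then $(m+1)\pi^*(-K_X)-E^{(n)}-S\equiv(m+1-\mu_0^{(n)})\pi^*(-K_X)$ is genuinely nef and big and vanishing applies. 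Likewise on $S$ one subtracts $H$ and uses $\mathcal{L}_{m,n}-H-C\equiv(m+1-\mu_0^{(n)}-m_1)\pi^*(-K_X)|_S$. Since these are sub-systems of $\Lambda_m$, producing on $C$ a divisor of the form $K_C+\roundup{\mathcal{D}_{m,n}}$ with $\deg\roundup{\mathcal{D}_{m,n}}\geq\roundup{\varepsilon(m,n)}\geq 3$ still suffices for birationality of $\Phi_{\Lambda_m}|_C$. So the issue is not ``meticulous bookkeeping of fractional round-ups'' as you suggest at the end; it is the structural replacement of $\Lambda_m$ by these sub-systems, without which your two Kawamata--Viehweg applications are invalid. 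Your $\epsilon$-approximation of $\mu_0$ is the right instinct (the infimum need not be attained), but it must be combined with this subtraction trick to make the argument work.
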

\begin{proof} By Lemma \ref{b reduction}, we only need to prove the birationality of ${\Phi_{\Lambda_m}}$. Since Assumption \ref{b asum}(1) is satisfied, the usual birationality principle (see, for instance,  \cite[2.7]{CC2}) reduces the birationality of ${\Phi_{\Lambda_m}}$ to that of
${\Phi_{\Lambda_m}}|_S$ for a generic irreducible element $S$ of $|M_{-m_0}|$.
Similarly, due to Assumption \ref{b asum}(2), we only need to prove
the birationality of ${\Phi_{\Lambda_m}}|_C$ for a generic
irreducible element $C$ of $|G|$. Now we show how to restrict the linear system $\Lambda_m$ to $C$.

%Equality (\ref{b 2.1}) gives
%$$k_0\pi^*(-K_X)=M_{-k_0}+F_{k_0}$$ for some effective
%$\bQ$-divisor $F_{k_0}$. 

Now assume $\varepsilon(m)>0$. We can find a sufficiently large integer $n$ so that there exists a number $\mu_0^{(n)}\in \bQ^+$ with $0\leq \mu_0^{(n)}-\mu_0\leq \frac{1}{n}$, $\roundup{\varepsilon(m,n)}=\roundup{\varepsilon(m)}$ 
where $\varepsilon(m,n):=(m+1-\mu_0^{(n)}-m_1)\zeta$ 
and 
$$\mu_0^{(n)}\pi^*(-K_X)\sim_{\bQ} S+E^{(n)}$$ for an effective $\bQ$-divisor $E^{(n)}$. In particular, $\varepsilon(m,n)>0$, and $\varepsilon(m,n)>2$ if $\varepsilon(m)>2$. Re-modify the resolution $\pi$ in Subsection \ref{b setting} so that $E^{(n)}$ has simple normal crossing support.

For the given integer $m>0$, we have
\begin{align}\label{b subsys1}
|K_Y+\roundup{(m+1)\pi^*(-K_X)-E^{(n)}}|\preceq
|K_Y+\roundup{(m+1)\pi^*(-K_X)}|.
\end{align}
Since
$\varepsilon(m,n)>0$, the $\bQ$-divisor
$$(m+1)\pi^*(-K_X)-E^{(n)}-S\equiv (m+1-\mu_0^{(n)})\pi^*(-K_X)$$
is nef and big and thus $$H^1(Y,
K_Y+\roundup{(m+1)\pi^*(-K_X)-E^{(n)}}-S)=0$$ by
Kawamata--Viehweg vanishing theorem. Hence we have
surjective map
\begin{align}\label{b surj1}
H^0(Y,K_Y+\roundup{(m+1)\pi^*(-K_X)-E^{(n)}})\longrightarrow
H^0(S, K_S+L_{m,n}) 
\end{align} 
where
\begin{align}\label{b subsys2}
L_{m,n}:=(\roundup{(m+1)\pi^*(-K_X)-E^{(n)}}-S)|_S\geq
\roundup{\mathcal{L}_{m,n}}
\end{align}
and ${\mathcal
L}_{m,n}:=((m+1)\pi^*(-K_X)-E^{(n)}-S)|_S.$ Moreover, we have
$$m_1\pi^*(-K_X)|_S\equiv C+H$$
for an effective $\bQ$-divisor $H$ on $S$ by the setting. Thus the $\bQ$-divisor
$${\mathcal
L}_{m,n}-H-C\equiv (m+1-\mu_0^{(n)}-m_1) \pi^*(-K_X)|_S$$
is nef and big by $\varepsilon(m,n)>0$. By Kawamata--Viehweg vanishing theorem again, $$H^1(S,
K_S+\roundup{{\mathcal
L}_{m,n}-H}-C)=0.$$ Therefore, we have surjective map
\begin{align}\label{b surj2}
H^0(S, K_S+\roundup{{\mathcal
L}_{m,n}-H})\longrightarrow
H^0(C, K_C+D_{m,n})
\end{align} 
where
\begin{align}\label{b subsys3}
D_{m,n}:=\roundup{{\mathcal
L}_{m,n}-H-C}|_C\geq
\roundup{\mathcal{D}_{m,n}}
\end{align} 
and
$\mathcal{D}_{m,n}:=({\mathcal
L}_{m,n}-H-C)|_C$ with
$\deg\roundup{\mathcal{D}_{m,n}}\geq \roundup{\varepsilon(m,n)}$.

Now by relations (\ref{b subsys1})--(\ref{b subsys3}), to prove
the birationality of ${\Phi_{\Lambda_m}}|_C$, it is
sufficient to prove that $|K_C+\roundup{\mathcal{D}_{m,n}}|$ gives a
birational map. Clearly this is the case whenever $\varepsilon(m)>2$,
which in fact implies $\deg(\roundup{\mathcal{D}_{m,n}})\geq \roundup{\varepsilon(m,n)}\geq 3$ and $K_C+\roundup{\mathcal{D}_{m,n}}$ is very ample. 

We complete the proof.
\end{proof}

\begin{cor}\label{m00}  Keep the same notation as above. For any integer $m>0$, set
$$\varepsilon(m,0):=(m+1-\frac{m_0}{\iota(m_0)}-m_1)\zeta.$$ If  $\varepsilon(m,0)>0$, then
$$\Lambda_m|_S\succeq |K_S+L_m|$$
where  $L_m:=(\roundup{(m+1)\pi^*(-K_X)-\frac{1}{\iota(m_0)}F_{m_0}}-S)|_S$.
\end{cor}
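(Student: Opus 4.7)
\textbf{Proof proposal for Corollary \ref{m00}.} The plan is to mimic the restriction argument in the proof of Theorem \ref{b mb}, but replacing the approximation $\mu_0^{(n)}$ of $\mu_0$ by the explicit rational number $\frac{m_0}{\iota(m_0)}$, which by Remark \ref{upper mu0} is a concrete upper bound for $\mu_0$. The key point is that the decomposition $m_0\pi^*(-K_X)=\iota(m_0)S+F_{m_0}$ gives directly
$$
\frac{m_0}{\iota(m_0)}\pi^*(-K_X)\sim_{\bQ} S+\frac{1}{\iota(m_0)}F_{m_0},
$$
so the effective $\bQ$-divisor $E:=\frac{1}{\iota(m_0)}F_{m_0}$ plays the role that $E^{(n)}$ played in Theorem \ref{b mb}. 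After refining $\pi$ if needed, we may assume the support of $E$ together with the relevant exceptional divisors has simple normal crossings, so that Kawamata--Viehweg vanishing for $\bQ$-divisors applies to round-ups.

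First, since $E$ is an effective $\bQ$-divisor, one has the obvious inclusion
$$
\bigl|K_Y+\roundup{(m+1)\pi^*(-K_X)-E}\bigr|\ \preceq\ \Lambda_m.
$$
Second, because $S$ is an integral (indeed Cartier) divisor on $Y$, we have
$$
\roundup{(m+1)\pi^*(-K_X)-E}-S=\roundup{(m+1)\pi^*(-K_X)-E-S},
$$
and the underlying $\bQ$-divisor satisfies
$$
(m+1)\pi^*(-K_X)-E-S\sim_{\bQ}\bigl(m+1-\tfrac{m_0}{\iota(m_0)}\bigr)\pi^*(-K_X).
$$
Under the hypothesis $\varepsilon(m,0)>0$ we have $m+1-\frac{m_0}{\iota(m_0)}>m_1>0$, so the right-hand side is nef and big (pullback of a nef and big $\bQ$-divisor). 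By Kawamata--Viehweg vanishing,
$$
H^1\bigl(Y,\ K_Y+\roundup{(m+1)\pi^*(-K_X)-E}-S\bigr)=0,
$$
which, via the short exact sequence for $S\hookrightarrow Y$ and the adjunction $(K_Y+S)|_S=K_S$, yields a surjection
$$
H^0\bigl(Y,\ K_Y+\roundup{(m+1)\pi^*(-K_X)-E}\bigr)\twoheadrightarrow H^0(S,\ K_S+L_m),
$$
where $L_m=(\roundup{(m+1)\pi^*(-K_X)-E}-S)|_S$ matches the definition in the statement.

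Combining the inclusion of linear systems with this surjectivity, the restriction of $\Lambda_m$ to $S$ contains the image of $|K_Y+\roundup{(m+1)\pi^*(-K_X)-E}|$ restricted to $S$, which by the surjection coincides with $|K_S+L_m|$ plus possibly a fixed divisor coming from $\Lambda_m\setminus |K_Y+\roundup{(m+1)\pi^*(-K_X)-E}|$. Thus $\Lambda_m|_S\succeq |K_S+L_m|$, as claimed. The only technical point worth care is ensuring the simple normal crossing hypothesis for the fractional part in the Kawamata--Viehweg step, which is a routine further blow-up of $\pi$ (harmless because both $\zeta$ and $\varepsilon(m,0)$ are invariant under pullback by the projection formula).
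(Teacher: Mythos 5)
Your proposal is correct and follows essentially the same route as the paper: the paper's own proof of this corollary simply observes that, with $E^{(n)}$ replaced by $\frac{1}{\iota(m_0)}F_{m_0}$ (exactly your $E$), the inclusion of linear systems and the Kawamata--Viehweg vanishing/surjectivity step from the front part of the proof of Theorem \ref{b mb} go through verbatim once $\varepsilon(m,0)>0$ guarantees that $(m+1)\pi^*(-K_X)-\frac{1}{\iota(m_0)}F_{m_0}-S$ is (numerically) a positive multiple of $\pi^*(-K_X)$, hence nef and big. Your added remarks on the round-up identity across the integral divisor $S$ and on re-modifying $\pi$ for simple normal crossings are the same technical points the paper handles implicitly by citing Theorem \ref{b mb}.
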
 
\begin{proof} {}First of all, relation (\ref{b subsys1}) reads 
\begin{align}\label{b0 subsys1}
|K_Y+\roundup{(m+1)\pi^*(-K_X)-\frac{1}{\iota(m_0)}F_{m_0}}|\preceq
|K_Y+\roundup{(m+1)\pi^*(-K_X)}|.
\end{align}
In fact, as long as  $\varepsilon(m,0)>0$, the front part of the proof of Theorem \ref{b mb} is valid. In explicit, surjective map (\ref{b surj1}) reads the following surjective map
\begin{align}\label{b0 surj1}
H^0(Y,K_Y+\roundup{(m+1)\pi^*(-K_X)-\frac{1}{\iota(m_0)}F_{m_0}})\longrightarrow
H^0(S, K_S+L_m) 
\end{align} 
where
\begin{align}\label{b0 subsys2}
L_m:=(\roundup{(m+1)\pi^*(-K_X)-\frac{1}{\iota(m_0)}F_{m_0}}-S)|_S.
\end{align}
Hence we have proved the statement.
\end{proof}

\subsection{Applications}\label{b section birationality}\ 

In order to apply Theorem \ref{b mb}, we need to verify Assumption \ref{b asum} and $\varepsilon(m)>2$ in advance, for which one of the crucial steps is to estimate the lower bound of $\zeta$.

\begin{prop}[{cf. \cite[Theorem 3.2]{C}}]\label{b zeta}Let $m>0$ be an integer. Keep the same notation as in Subsection \ref{b keythm}.
\begin{itemize}
\item[(i)] If $g(C)>0$ and $\varepsilon(m)>1$, then $\zeta\geq \frac{2g(C)-2+\roundup{\varepsilon(m)}}{m}$;

\item[(ii)] Moreover, if $g(C)>0$, then $$\zeta\geq \frac{2g(C)-1}{\mu_0+m_1};$$

\item[(iii)] If $g(C)=1$, then $\zeta\geq \frac{1}{r_{\max}}$, where $r_{\max}=\max\{r_i\in B_X\}$ is the maximum of local indices of singularities;

\item[(iv)] If $g(C)=0$, then $\zeta\geq 2$;

\item[(v)] If $h^0(-\nu K_X)>0$ for some integer $\nu$, then $\zeta\geq \frac{1}{\nu r_{\max}}$.
\end{itemize}
\end{prop}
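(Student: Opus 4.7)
The plan is to treat the five parts within a common framework built on adjunction along the chain $Y\supset S\supset C$, combined with the Kawamata--Viehweg vanishing/restriction machinery already developed in the proof of Theorem~\ref{b mb}. A recurring ingredient is that $\zeta>0$ for generic $C$: since $\pi^*(-K_X)$ is big and nef, its null locus is a proper Zariski-closed subset, so a generic moving $C$ avoids it.

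For (i), I would revisit the two surjective restriction maps constructed in the proof of Theorem~\ref{b mb}. Their composition realizes $|K_C+D_{m,n}|$ as the restriction of a sublinear series of $\Lambda_m$, and a direct intersection computation gives $\deg D_{m,n}\geq \varepsilon(m,n)$, hence $\deg D_{m,n}\geq \roundup{\varepsilon(m,n)}$ since the degree is an integer. Comparing this lower bound for $2g(C)-2+\deg D_{m,n}$ with the upper bound for the same number coming from $M_{-m}\leq m\pi^*(-K_X)$ (after checking that the exceptional round-up corrections can be absorbed for a sufficiently generic $C$, using the base-point-freeness of $|M_{-m_0}|$ and $|G|$) yields the stated inequality after letting $n\to\infty$. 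Part (ii) then follows by applying (i) to an integer $m$ chosen so that $\varepsilon(m)\in(1,2]$ (forcing $\roundup{\varepsilon(m)}=2$), combined with a limiting argument based on the rational approximations $\mu_0^{(n)}\searrow\mu_0$, to replace the integer $m$ in the bound by the continuous quantity $\mu_0+m_1$.

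For (iii) and (iv), adjunction suffices. Writing $K_Y=-\pi^*(-K_X)+E_\pi$ with $E_\pi$ the exceptional effective $\bQ$-divisor whose coefficients (discrepancies) are $\geq 1/r_{\max}$, since $X$ has terminal singularities of local index at most $r_{\max}$, the adjunction chain $Y\supset S\supset C$ gives
\begin{align*}
2g(C)-2 \;=\; (K_Y+S)\cdot C + C^2_S \;=\; -\zeta + E_\pi\cdot C + S\cdot C + C^2_S,
\end{align*}
where $S\cdot C\geq 0$ (nefness of the BPF movable part $M_{-m_0}$), $C^2_S\geq 0$ (as $C$ moves in the BPF $|G|$), and $E_\pi\cdot C\geq 0$ (as $C\not\subset E_\pi$). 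Rearranging, $\zeta=2-2g(C)+S\cdot C+C^2_S+E_\pi\cdot C$; this immediately gives (iv) for $g(C)=0$. For $g(C)=1$, $\zeta$ is the sum of three non-negative terms, and $\zeta>0$ forces either the integer part $S\cdot C+C^2_S$ to be $\geq 1$, or $E_\pi\cdot C>0$ (in which case $E_\pi\cdot C\geq 1/r_{\max}$); either way $\zeta\geq 1/r_{\max}$. For (v), pick an effective $D_\nu\sim -\nu K_X$ with $\pi(C)\not\subset\mathrm{Supp}(D_\nu)$ and decompose $\pi^*(D_\nu)=\tilde D_\nu+\sum c_i E_i$ with $\tilde D_\nu$ the strict transform and $c_i\in\tfrac{1}{r_i}\ZZ_{\geq 0}$ (with $r_i$ the index of the singularity $\pi(E_i)$). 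Intersecting with $C$ gives $\nu\zeta=\tilde D_\nu\cdot C+\sum c_i(E_i\cdot C)$, and $\zeta>0$ forces some summand to be positive; since each nonzero summand is either an integer $\geq 1$ or bounded below by $1/r_{\max}$, we obtain $\nu\zeta\geq 1/r_{\max}$, i.e.\ $\zeta\geq 1/(\nu r_{\max})$.

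The main obstacle lies in the bookkeeping for (i) and (ii): establishing the clean bound $\zeta\geq (2g(C)-2+\roundup{\varepsilon(m)})/m$ requires a careful argument that the various ceiling corrections on $Y$ and $S$, together with the exceptional intersections $E_\pi\cdot C$ and the like, can be arranged to vanish or be favorably absorbed for generic $C$; and the passage from the integer-$m$ bound of (i) to the continuous bound of (ii) hinges on the delicate limit exploiting the rational approximations $\mu_0^{(n)}$.
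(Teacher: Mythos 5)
Your parts (iii)--(v) are correct and essentially identical to the paper's argument (adjunction along $Y\supset S\supset C$, non-negativity and integrality of $(C^2)_S$ and $(S\cdot C)_Y$, and the fact that every nonzero coefficient of $E_\pi$, resp.\ of $\pi^*D_\nu$, is at least $1/r_{\max}$). For (i), however, the decisive step is exactly the point you defer to ``bookkeeping''. The full adjoint divisor $K_Y+\roundup{(m+1)\pi^*(-K_X)}$ is \emph{bigger} than $m\pi^*(-K_X)$ (it equals $\pi^*(-mK_X)+E_\pi+E_{m+1}$), so the degree bound on $C$ cannot be compared with $m\zeta$ directly; one must pass through movable parts. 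The paper's mechanism is: since $g(C)>0$ and $\roundup{\varepsilon(m,n)}=\roundup{\varepsilon(m)}\geq 2$, the divisor $K_C+\roundup{\mathcal{D}_{m,n}}$ has degree $\geq 2g(C)$ and is therefore base point free; hence it is dominated by $\mathcal{N}_m|_C$, where $\mathcal{N}_m$ is the movable part of $|K_S+\roundup{\mathcal{L}_{m,n}-H}|$, and by \cite[Lemma 2.7]{Camb} one has $\mathcal{N}_m\leq M_{-m}|_S\leq m\pi^*(-K_X)|_S$; intersecting with $C$ gives $m\zeta\geq 2g(C)-2+\roundup{\varepsilon(m)}$. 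Your sketch never uses the hypotheses $g(C)>0$ and $\varepsilon(m)>1$, which is a sign that this base-point-freeness/movable-part comparison (the actual content of (i)) is missing; also no limit $n\to\infty$ is needed, since $n$ is chosen so that $\roundup{\varepsilon(m,n)}=\roundup{\varepsilon(m)}$.

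Part (ii) is where the proposal genuinely breaks down. An integer $m$ with $\varepsilon(m)\in(1,2]$ need not exist: $\varepsilon$ increases in steps of size $\zeta$ and can jump over that interval when $\zeta>1$, and $\zeta>1$ does not by itself imply $\zeta\geq\frac{2g(C)-1}{\mu_0+m_1}$. Moreover, even when such an $m$ exists, the information $\varepsilon(m)\leq 2$ combined with $\zeta\geq 2g(C)/m$ only yields a bound of the shape $\zeta\geq\frac{2(g(C)-1)}{\mu_0+m_1-1}$, which is weaker than the claim (and empty for $g(C)=1$); and the rational approximations $\mu_0^{(n)}$ play no role in converting the integer $m$ into $\mu_0+m_1$ --- they live inside the vanishing argument of Theorem \ref{b mb}, not here. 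The missing idea is the minimality trick: let $m'$ be the \emph{smallest} integer with $\varepsilon(m')>1$ (it exists since $\zeta>0$). If $m'\leq \mu_0+m_1$ one is done by (i); otherwise minimality gives $\varepsilon(m'-1)\leq 1$, i.e.\ $(m'-\mu_0-m_1)\zeta\leq 1$, which together with $\zeta\geq 2g(C)/m'$ forces $m'\leq\frac{2g(C)}{2g(C)-1}(\mu_0+m_1)$ and hence $\zeta\geq\frac{2g(C)-1}{\mu_0+m_1}$. Without this use of $\varepsilon(m'-1)\leq 1$ the stated constant is not reached.
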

\begin{proof}(i). In the proof of Theorem \ref{b mb}, if $g(C)>0$ and $\varepsilon(m)>1$ then
$|K_C+\roundup{\mathcal{D}_{m,n}}|$ is base point free with
$$\deg(K_C+\roundup{\mathcal{D}_{m,n}})\geq 2g(C)-2+\roundup{\varepsilon(m,n)}=2g(C)-2+\roundup{\varepsilon(m)}.$$
Denote by $\mathcal{N}_m$ the movable part of
$|K_S+\roundup{\mathcal{L}_{m,n}-H}|$. Noting the relations
(\ref{b subsys1})--(\ref{b subsys3}) while applying \cite[Lemma 2.7]{Camb}, we
get
$$m\pi^*(-K_X)|_S\geq {M_{-m}}|_S\geq \mathcal{N}_m$$
and ${\mathcal{N}_m}|_C\geq K_C+\roundup{\mathcal{D}_{m,n}}$ since
the latter one is base point free.
So we have 
$$m\zeta=m\pi^*(-K_X)|_S\cdot C\geq \mathcal{N}_m\cdot C\geq \deg(K_C+\roundup{\mathcal{D}_{m,n}}).$$
Hence 
$$m\zeta\geq
2g(C)-2+\roundup{\varepsilon(m)}.$$

(ii). Take $m'=\min\{m\mid\varepsilon(m)>1\}$, then (i) implies
$\zeta\geq \frac{2g(C)}{m'}$. We may assume that $m'> \mu_0+m_1$ otherwise $\zeta\geq \frac{2g(C)}{\mu_0+m_1}$. Hence 
\begin{align*}
\varepsilon(m'-1){}&=(m'-1+1-\mu_0-m_1)\zeta\\
{}&\geq (m'-\mu_0-m_1)\frac{2g(C)}{m'}.
\end{align*}
By the minimality of $m'$, it follows that $\varepsilon(m'-1)\leq 1$. Hence $m'\leq \frac{2g(C)}{2g(C)-1}(\mu_0+m_1)$. Then $$\zeta\geq \frac{2g(C)}{m'}\geq \frac{2g(C)-1}{\mu_0+m_1}.$$

(iii). If $g(C)=1$, then
\begin{align*}
\zeta={}&(\pi^*(-K_X)\cdot C)_Y=((-K_{Y}+E_{\pi})\cdot C)_Y\\
={}& (-(K_{Y}+S)\cdot C+S\cdot C+E_{\pi}\cdot C)_Y\\
={}& (-K_S\cdot C)_S+(S\cdot C+E_{\pi}\cdot C)_Y\\
={}& (C^2)_S+(S\cdot C+E_{\pi}\cdot C)_Y.
\end{align*}
Since $C$ is free on surface $S$,  $(C^2)_S$, $(S\cdot C)_Y$, and $(E_{\pi}\cdot C)_Y$ are non-negative.
Since $(C^2)_S$ and  $(S\cdot C)_Y$ are integers, we may assume $(C^2)_S=(S\cdot C)_Y=0$ otherwise $\zeta\geq 1$. Hence $\zeta=E_{\pi}\cdot C$.

On the other hand, take $q:W\rightarrow X$ is the resolution of isolated singularities and  we may assume that $Y$ dominates $W$ by $p:Y\rightarrow W$.
Then we write
$$
K_W=q^*K_X+\Delta.
$$
Here 
$$
\Delta=\sum\frac{a_i}{r_i}E_i
$$
where $E_i$ is the exceptional divisor over an isolated singular point of index $r_i$ for some $r_i\in B_X$ and $a_i$ is a positive integer.
Then 
$$
E_{\pi}=K_Y-p^*K_W+p^*\Delta.
$$
Take $r_{\text{max}}=\max\{r_i\}$. Then all the coefficients of $E_{\pi}$ are at least $\frac{1}{r_{\text{max}}}$ since $K_Y-p^*K_W$ is integral  effective and $${\rm Supp}(E_{\pi})={\rm Supp}(K_Y-p^*K_W+p^{-1}_*\Delta).$$
By $E_{\pi}\cdot C=\zeta>0$, we know that there is at least one component $E$ of $E_{\pi}$ such that  $E\cdot C>0$.
Then $E_{\pi}\cdot C\geq \frac{1}{r_{\text{max}}} E\cdot C\geq \frac{1}{r_{\text{max}}}.$

(iv). If $g(C)=0$, then
\begin{align*}
\zeta={}&(\pi^*(-K_X)|_S\cdot C)_S=((-K_{Y}+E_{\pi})|_S\cdot C)_S\\
\geq{}& (-K_{Y}|_S\cdot C)_S\geq (-K_S\cdot C)_S\geq -\deg(K_C)=2.
\end{align*}

(v).  If $h^0(-\nu K_X)>0$ for some integer $\nu$, then $-\nu K_X\sim D$ for some effective Weil divosor $D$. Similarly as (iii),  $\pi^*D$ is an effective $\bQ$-divisor with all the coefficients  at least $\frac{1}{r_{\text{max}}}$. By $\pi^*D\cdot C=\nu\zeta>0$, we know that there is at least one component $D_1$ of $\pi^*D$ such that  $D_1\cdot C>0$.
Then $\zeta=\frac{1}{\nu}\pi^*D\cdot C\geq \frac{1}{\nu r_{\text{max}}} D_1\cdot C\geq \frac{1}{\nu r_{\text{max}}}.$
\end{proof}

To verify Assumption \ref{b asum}(1), we have the following proposition.

\begin{prop}[{cf. \cite[Proposition 3.6]{C}}]\label{b a1} Let $X$ be a weak $\bQ$-Fano
3-fold. Keep the same notation as Subsection \ref{b keythm}. Then
Assumption \ref{b asum}(1) is satisfied for all
$$m\geq \begin{cases}
m_0+6, & {\rm if } \ m_0\geq 2;\\
2, &{\rm if } \ m_0=1.
\end{cases}$$
\end{prop}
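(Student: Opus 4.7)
The approach mirrors \cite[Proposition 3.6]{C}: we exhibit, inside $\Lambda_m$, a sub-linear system that dominates the map induced by $|M_{-m_0}|$ (Case $(f_{\rm np})$) or the underlying pencil $|S|$ (Case $(f_{\rm p})$), so that different generic members $S', S''$ of $|M_{-m_0}|$ go to different images under $\Phi_{\Lambda_m}$. In both cases the crucial reduction is to the non-vanishing
$$h^0\bigl(Y, K_Y + \lceil (m+1)\pi^*(-K_X)\rceil - S\bigr) \geq 1,$$
since such a section, together with the movability of $S$ within its family (or within the pencil $|S|$), produces a divisor in $\Lambda_m$ meeting $S'$ but not $S''$.

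To prove the non-vanishing, I would exploit the identity $\tfrac{m_0}{\iota(m_0)}\pi^*(-K_X) = S + \tfrac{1}{\iota(m_0)}F_{m_0}$ (coming from $m_0\pi^*(-K_X)=\iota(m_0)S+F_{m_0}$) and rewrite
$$(m+1)\pi^*(-K_X) - S = \Bigl(m + 1 - \tfrac{m_0}{\iota(m_0)}\Bigr)\pi^*(-K_X) + \tfrac{1}{\iota(m_0)} F_{m_0},$$
which is nef and big once $m+1 > m_0/\iota(m_0)$. After re-modifying $\pi$ so that $F_{m_0}$ has simple normal crossing support, Kawamata--Viehweg vanishing applied to $K_Y + \lceil (m+1)\pi^*(-K_X) - \tfrac{1}{\iota(m_0)}F_{m_0}\rceil - S$ gives a surjection
$$H^0\bigl(Y, K_Y + \lceil(m+1)\pi^*(-K_X) - \tfrac{1}{\iota(m_0)}F_{m_0}\rceil\bigr) \twoheadrightarrow H^0(S, K_S + L_m),$$
where $L_m$ is the round-up of a nef and big $\mathbb{Q}$-divisor on $S$ (essentially $(m+1-\tfrac{m_0}{\iota(m_0)})\pi^*(-K_X)|_S$ plus a small effective piece). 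Via Lemma \ref{b Hn}, the left-hand side embeds in $\Lambda_m$ up to an effective fixed part, and its $h^0$ can be bounded below by $P_{-k}(X)$ for a suitable non-negative integer $k$, hence is positive once the rounding produces an effective class.

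The delicate case is the pencil subcase $(f_{\rm p})$, where $\iota(m_0)$ can be as large as $P_{-m_0}-1$, so that separating members of the pencil $|S|$ (rather than of $|M_{-m_0}|=|\iota(m_0)S|$) requires the fractional contribution $\tfrac{1}{\iota(m_0)}F_{m_0}$ to be handled carefully. The constant $6$ in the bound $m \geq m_0+6$ is introduced precisely to absorb the worst-case rounding loss and to guarantee that $L_m$ is a nef big integral divisor on $S$ of sufficient positivity to admit a non-zero section (for example by a further application of Kawamata--Viehweg vanishing on $S$ combined with a Riemann--Roch estimate). For $m_0=1$, the divisor $F_1$ can be taken to be essentially zero and $\pi^*(-K_X)-S$ is already $\mathbb{Q}$-effective, so the threshold collapses to $m\geq 2$. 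I expect the main obstacle to lie in the rounding bookkeeping in the pencil case and in verifying that $L_m$ indeed restricts to a nef and big divisor on $S$ of sufficient positivity; once these positivity estimates are in place, the separation of generic members follows from the surjection above together with the fact that $|S|$ (respectively $|M_{-m_0}|$) already separates its own generic members.
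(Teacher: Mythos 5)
Your separation mechanism (a fixed effective divisor $D\in |K_Y+\roundup{(m+1)\pi^*(-K_X)}-S|$ plus a generic member $S'$ of the moving family) could in principle replace the paper's use of Tankeev's lemma and of the rational pencil structure, but the way you propose to obtain the crucial non-vanishing does not work, and this is a genuine gap. The surjection
$H^0\bigl(Y,K_Y+\roundup{(m+1)\pi^*(-K_X)-\tfrac{1}{\iota(m_0)}F_{m_0}}\bigr)\twoheadrightarrow H^0(S,K_S+L_m)$
obtained from Kawamata--Viehweg vanishing says nothing about sections of the divisor you actually need, namely $K_Y+\roundup{(m+1)\pi^*(-K_X)}-S$ (that space is the kernel of the restriction, and surjectivity gives no lower bound on the kernel); and the alternative you sketch, namely proving $h^0(S,K_S+L_m)>0$ by vanishing plus Riemann--Roch on $S$, is neither needed nor easy: an adjoint system $K_S+\roundup{N}$ with $N$ nef and big but of small degree can perfectly well be empty, and no estimate of the required intersection numbers is provided. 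In short, the central inequality $h^0(Y,K_Y+\roundup{(m+1)\pi^*(-K_X)}-S)\geq 1$ is asserted but never established by your argument.

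The actual source of the numerical thresholds is also misidentified. The constant $6$ has nothing to do with ``rounding loss'' or positivity on $S$: the paper simply writes
$K_Y+\roundup{(m+1)\pi^*(-K_X)}\geq \bigl(K_Y+\roundup{(m-m_0+1)\pi^*(-K_X)}\bigr)+M_{-m_0}$
(using $m_0\pi^*(-K_X)\geq M_{-m_0}$) and invokes $h^0(K_Y+\roundup{(m-m_0+1)\pi^*(-K_X)})=P_{-(m-m_0)}>0$, which for an arbitrary weak $\bQ$-Fano $3$-fold is guaranteed only for $m-m_0\geq 6$ (the optimal non-vanishing of \cite{CC}, quoted via \cite[Appendix]{C}); when $m_0=1$ one has $P_{-1}=P_{-m_0}\geq 2$, so $m-m_0\geq 1$ suffices --- not because ``$F_1$ is essentially zero,'' which is unjustified. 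Once $\Lambda_m\succeq |M_{-m_0}|$ is known, the paper concludes by Tankeev's lemma in Case $(f_{\rm np})$ and by the fact that the rational pencil $|M_{-m_0}|$ already separates fibers in Case $(f_{\rm p})$; your fixed-divisor genericity argument could serve the same purpose, but only after the non-vanishing above is proved, and that is exactly the step your proposal leaves open. Note also that this same inequality immediately dominates $S$ (since $S\leq M_{-m_0}$), so even your reduction would follow from the paper's one-line estimate rather than from any restriction to $S$.
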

\begin{proof}We have
\begin{align*} 
{}& K_Y+\roundup{(m+1)\pi^*(-K_X)}\\
\geq {}& K_Y+\roundup{(m-m_0+1)\pi^*(-K_X)+M_{-m_0}}\\
={}& (K_Y+\roundup{(m-m_0+1)\pi^*(-K_X)})+M_{-m_0}\\
\geq{}& M_{-m_0}.
\end{align*} 
The last inequality is due to $$
h^{0}(K_Y+\roundup{(m-m_0+1)\pi^*(-K_X)})=h^{0}(-(m-m_0)K_X)>0
$$
by Lemma \ref{b reduction} and \cite[Appendix]{C}, since $m-m_0\geq 6$ whenever $m_0\geq 2$ (resp. $\geq 1$ whenever
$m_0=1$).

When $f:Y\rightarrow \Gamma$ is of type $(f_{\text{np}})$,
\cite[Lemma 2]{T} implies that $\Lambda_m$ can distinguish different
generic irreducible elements of $|M_{-m_0}|$. When $f$ is of type
$(f_{\text{p}})$, since the rational (i.e. $\Gamma\cong \bP^1$)
pencil $|M_{-m_0}|$ can already separate different fibers of $f$,
$\Lambda_m$ can naturally distinguish different generic irreducible
elements of $|M_{-m_0}|$.
\end{proof}
%\begin{lem}\label{b >6}
%Let $X$ be a weak $\bQ$-Fano
%3-fold. Assume that $P_{-m_0}>0$ for some positive integer $m_0$. Then $P_{-m}>0$ for all $m\geq 3m_0$. 
%\end{lem}
%\begin{proof}
%If $m_0=1$, it is trivial. If $m_0\geq 2$, then it follows from $P_{-m}>0$ for all $m\geq 6$ by \cite[Appendix]{C}.
%\end{proof}

It is slightly more complicated to verify Assumption \ref{b asum}(2).

\begin{lem}[{cf. \cite[Lemma 3.7]{C}}]\label{b S2} Let $T$ be a nonsingular projective surface with a base point free linear system $|G|$. Let $Q$ be
an arbitrary $\bQ$-divisor on $T$. % such that the fractional part $\{Q\}$ of $Q$ is with simple normal crossing support.
Denote by $C$ a generic irreducible element of $|G|$. Then the linear system
$|K_T+\roundup{Q}+G|$ can distinguish different generic irreducible
elements of $|G|$ under one of the following conditions:
\begin{itemize}
\item[(i)] $|G|$ is not
composed with an irrational pencil of curves and $K_T+\roundup{Q}$ is effective;

\item[(ii)]$|G|$ is composed with an
irrational pencil of curves, $g(C)> 0$, and $Q$ is nef and big;

\item[(iii)] $|G|$ is composed with an
irrational pencil of curves, $g(C)=0$, $Q$ is nef and big, and $Q\cdot G>1$.
\end{itemize}\end{lem}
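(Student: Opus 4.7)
The plan is to treat the three cases separately, using a ``larger linear system'' principle for (i) and Kawamata--Viehweg vanishing combined with a restriction argument for (ii) and (iii).

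For \textbf{case (i)}, since $K_T + \roundup{Q}$ is effective, we have $|K_T + \roundup{Q} + G| \succeq |G|$. It therefore suffices to check that $|G|$ itself distinguishes different generic irreducible elements whenever $|G|$ is not composed with an irrational pencil. This follows from the Stein factorization $\Phi_G \colon T \to \Gamma \to \bP^{h^0(G)-1}$: if $\dim \Phi_G(T) = 2$, distinct generic members of $|G|$ are distinct hyperplane sections of $\Phi_G(T)$; if $|G|$ is composed with a rational pencil, then $\Gamma \cong \bP^1$ embeds as a rational normal curve into $\bP^{h^0(G)-1}$, so $\Phi_G$ separates the fibers of $f \colon T \to \Gamma$.

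For \textbf{cases (ii) and (iii)}, $|G|$ is composed with an irrational pencil $f \colon T \to \Gamma$ with $g(\Gamma) \geq 1$, and $G \sim nC$ for some $n \geq 1$ with $C$ a generic fiber satisfying $C^2 = 0$. Set $D := K_T + \roundup{Q} + G$. By adjunction and $C^2 = 0$, we have $D|_C = K_C + \roundup{Q}|_C$. Since $Q$ is nef and big, so is $Q + (n-1)C$ (as the sum of a nef big divisor and an effective one); after a log resolution ensuring the fractional part of $Q$ has simple normal crossing support, Kawamata--Viehweg vanishing gives
$$H^1(T, K_T + \roundup{Q + (n-1)C}) = 0,$$
hence the restriction
$$H^0(T, D) \twoheadrightarrow H^0(C, K_C + \roundup{Q}|_C)$$
is surjective.

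To distinguish two distinct generic fibers $C_1 \neq C_2$ of $f$, it suffices to produce a section of $D$ vanishing on $C_1$ but not on $C_2$, i.e., to prove that $h^0(T, D - C_1) > h^0(T, D - C_1 - C_2)$. Using $C_1 \cap C_2 = \emptyset$ for generic fibers and the restriction sequence, this reduces to showing $h^0(C_2, K_{C_2} + \roundup{Q}|_{C_2}) \geq 1$. In case (ii), $g(C_2) \geq 1$ and bigness of $Q$ give $\deg(\roundup{Q}|_{C_2}) \geq 1$, so by Riemann--Roch $h^0(K_{C_2} + \roundup{Q}|_{C_2}) \geq g(C_2) \geq 1$. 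In case (iii), $g(C_2) = 0$ and the hypothesis $Q \cdot G > 1$ is used to force $\deg(\roundup{Q}|_{C_2}) \geq 2$, whence $h^0 \geq 1$.

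The principal technical obstacle is the careful bookkeeping in case (iii): one must convert the inequality $Q \cdot G > 1$ into the integer degree bound $\deg(\roundup{Q}|_{C_2}) \geq 2$, which is delicate when $G \sim nC$ with $n \geq 2$. Equally important is the rigorous verification of the Kawamata--Viehweg vanishings by choosing a suitable log resolution on which the fractional part of $Q$ has simple normal crossing support.
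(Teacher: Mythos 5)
Your overall route is the same as the paper's: case (i) by the ``adding an effective divisor only enlarges the system'' principle together with the dichotomy non-pencil/rational pencil (the paper quotes Tankeev's lemma for the non-pencil half), and cases (ii)--(iii) by Kawamata--Viehweg vanishing plus restriction to generic fibers and Riemann--Roch on the fiber. The only structural difference is cosmetic: the paper restricts to the disjoint union of two generic fibers at once, getting a surjection onto $H^0(C_1,K_{C_1}+D_1)\oplus H^0(C_2,K_{C_2}+D_2)$, while you restrict one fiber at a time via $h^0(D-C_1)>h^0(D-C_1-C_2)$; these are equivalent. Two small imprecisions to fix: for an irrational pencil one only has $G\equiv nC$ (numerically) rather than $G\sim nC$, but since $G-C_1$ and $G-C_1-C_2$ are numerically multiples of the nef class $C$ (note $n=h^0(G)-1\geq 2$), the vanishing $H^1(T,D-C_1-C_2)=0$ still holds and your identification of $D|_{C}$ should be made through an actual member of $|G|$ containing the fiber; and in (ii) the bound $\deg(\roundup{Q}|_{C_2})\geq 1$ needs $Q\cdot C>0$, which does hold because $Q$ is nef and big and $C$ moves in a covering family (or by Hodge index, since $C^2=0$, $C\not\equiv 0$).

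The genuine gap is in case (iii), and it is exactly the step you defer: the hypothesis $Q\cdot G>1$ cannot be converted into $\deg(\roundup{Q}|_{C_2})\geq 2$ by this argument, because $G\equiv nC$ with $n\geq 2$ gives only $Q\cdot C>1/n$, hence only $\deg(\roundup{Q}|_{C})\geq 1$, which yields $\deg(K_C+\roundup{Q}|_C)\geq -1$ and no section when $g(C)=0$. This is not delicate bookkeeping that a more careful computation will rescue; the implication simply fails for the quantity $Q\cdot G$. The paper's own proof silently works with the condition $Q\cdot C>1$ (``If $g(C)=0$ and $Q\cdot C>1$ \dots''), and that is also the condition actually verified when the lemma is invoked in Proposition \ref{b a2}, where $Q\cdot C=\varepsilon(m,0)>1$ via Proposition \ref{b zeta}(iv). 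So to make your case (iii) complete you should state and use the hypothesis as a bound on $Q\cdot C$ (the intersection with a single generic fiber), after which your Riemann--Roch step gives $h^0(C_2,K_{C_2}+\roundup{Q}|_{C_2})\geq 1$ immediately; as written, with $Q\cdot G>1$ only, the proof of (iii) is incomplete.
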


\begin{proof} The statement corresponding to (i) follows from \cite[Lemma 2]{T} and
the fact that a rational pencil can automatically separate its
different generic irreducible elements.

For situations (ii) and (iii), we pick a generic irreducible element
$C$ of $|G|$. Then, since $h^0(S, G)\geq 2$, $G\equiv sC$ for some
integer $s\geq 2$ and $C^2=0$. Denote by $C_1$ and $C_2$ two
irreducible elements of $|G|$ such that $C_1+C_2\leq |G|$. Then Kawamata--Viehweg vanishing theorem
gives the surjective map
$$
H^0(T,K_T+\roundup{Q}+G)\longrightarrow H^0(C,K_{C_1}+D_1)\oplus
H^0(C_2,K_{C_2}+D_2)
$$ 
where $D_i:=(\roundup{Q}+G-C_i)|_{C_i}$ with $\deg(D_i)\geq Q\cdot C_i>0$ for $i=1,2$.

If $g(C)> 0$, Riemann--Roch formula gives
$h^0(C_i, K_{C_i}+D_i)>0$ for $i=1,2$. Thus $|K_T+\roundup{Q}+G|$ can
distinguish $C_1$ and $C_2$.

If $g(C)=0$ and $Q\cdot C>1$, then $h^0(C_i, K_{C_i}+D_i)> 0$ for $i=1,2$. So $|K_T+\roundup{Q}+G|$ can also
distinguish $C_1$ and $C_2$.
\end{proof}

\begin{prop}[{cf. \cite[Propositions 3.8, 3.9]{C}}]\label{b a2} Let $X$ be a weak $\bQ$-Fano 3-fold. Keep the same notation as in Subsection
\ref{b keythm}. 
Then Assumption \ref{b asum}(2) is satisfied for all 
$$m\geq \begin{cases}
m_0+m_1+6, &{\rm if } \  m_0\geq 2;\\
m_1+2, &{\rm if } \ m_0=1.
\end{cases}$$
\end{prop}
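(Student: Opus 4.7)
The plan is to adapt Proposition \ref{b a1} by stripping off an additional copy of $M_{-m_1}$ and applying the surface-level pencil-distinguishing criterion Lemma \ref{b S2} on $S$. Set $m^*:=m-m_1$. From $m_1\pi^*(-K_X)\geq M_{-m_1}$ and the integrality of $M_{-m_1}$,
\[
K_Y+\roundup{(m+1)\pi^*(-K_X)}\;\geq\;K_Y+\roundup{(m^*+1)\pi^*(-K_X)}+M_{-m_1},
\]
so $\Lambda_{m^*}+M_{-m_1}\subseteq\Lambda_m$. The hypothesis on $m$ becomes precisely $m^*\geq m_0+6$ (resp.\ $m^*\geq 2$ when $m_0=1$), which is the hypothesis of Proposition \ref{b a1} applied to $m^*$. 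Reading its proof, $\Lambda_{m^*}$ contains a divisor of the form $M_{-m_0}+(\text{effective})$, and since $M_{-m_0}\equiv\iota(m_0)S$ is movable, integral, and of multiplicity $\iota(m_0)\geq 1$, we conclude that $\Lambda_m$ contains a divisor $S+M_{-m_1}+F$ for some effective $F$.

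To transfer distinguishing properties to $S$, I use Kawamata--Viehweg vanishing as in Theorem \ref{b mb} and Corollary \ref{m00}: after possibly refining $\pi$, for any sufficiently large $n$ there is a surjection
\[
H^0\bigl(Y,\,K_Y+\roundup{(m+1)\pi^*(-K_X)-E^{(n)}}\bigr)\twoheadrightarrow H^0\bigl(S,\,K_S+L_{m,n}\bigr),
\]
where $L_{m,n}\geq\roundup{\mathcal{L}_{m,n}}$ with $\mathcal{L}_{m,n}:=((m+1-\mu_0^{(n)})\pi^*(-K_X))|_S$ nef and big. Pulling out $G=M_{-m_1}|_S$ using $m_1\pi^*(-K_X)|_S\geq G$ and the integrality of $G$,
\[
L_{m,n}\;\geq\;G+\roundup{Q},\qquad Q:=(m+1-\mu_0^{(n)}-m_1)\pi^*(-K_X)|_S;
\]
since $\mu_0\leq m_0/\iota(m_0)\leq m_0$ by Remark \ref{upper mu0}, the coefficient $m+1-\mu_0-m_1$ is $\geq 7$ when $m_0\geq 2$ and $\geq 2$ when $m_0=1$, so $Q$ is nef and big. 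Hence $\Lambda_m|_S\succeq|K_S+\roundup{Q}+G|$.

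Finally, I apply Lemma \ref{b S2} with $T=S$ and the above $Q,G$. Case (ii) (irrational pencil with $g(C)>0$) requires only that $Q$ be nef and big. Case (iii) (irrational pencil with $g(C)=0$) additionally needs $Q\cdot G>1$; writing $G\equiv\iota(G)C$ with $\iota(G)\geq 2$ and using $\zeta\geq 2$ from Proposition \ref{b zeta}(iv), one has $Q\cdot G=\iota(G)(m+1-\mu_0-m_1)\zeta\geq 2\cdot 2\cdot 2>1$. The main obstacle lies in case (i), where $|G|$ is not composed with an irrational pencil and we must verify that $K_S+\roundup{Q}$ is effective. I plan to settle this by exploiting the divisor $S+M_{-m_1}+F\in\Lambda_m$ produced in the first paragraph: by adjunction $K_S=(K_Y+S)|_S$, its restriction to $S$ yields an explicit effective divisor whose class dominates $K_S+\roundup{Q}+G$, from which effectivity of $K_S+\roundup{Q}$ is extracted by cancelling the movable $G$-component. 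Carefully tracking the rounding corrections through this cancellation, and ensuring the residual remains effective even when $S$ is (for example) a rational surface with strongly negative $K_S$, is where the bulk of the technical work resides.
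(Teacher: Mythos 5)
Your handling of cases (ii) and (iii) of Lemma \ref{b S2} is fine and matches the paper's proof, but the case you yourself flag as the ``main obstacle'' --- case (i), where $|G|$ is not composed with an irrational pencil --- is exactly where your argument has a genuine gap, and your proposed fix does not close it. You aim to verify the hypothesis of Lemma \ref{b S2}(i) literally, i.e.\ that $K_S+\roundup{Q}$ is effective with $Q\equiv(m+1-\mu_0-m_1)\pi^*(-K_X)|_S$; but this is the wrong target: when $S$ is rational with very negative $K_S$ (your own worry), $K_S+\roundup{Q}$ need not be effective at all, and your plan of ``cancelling the movable $G$-component'' from the effective member $S+M_{-m_1}+F\in\Lambda_m$ is not a valid deduction --- knowing that $K_S+\roundup{Q}+G$ is linearly equivalent to (or dominated by) an effective divisor does not let you subtract $G$ and keep effectivity.

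The paper avoids this entirely by a different decomposition. Starting from Corollary \ref{m00}, it writes $K_S+L_m\geq \bigl(K_Y+\roundup{(m+1-m_0-m_1)\pi^*(-K_X)}\bigr)|_S+M_{-m_1}|_S$, so the ``effective part'' required for the Tankeev-type argument is the restriction to the generic $S$ of a divisor on the threefold $Y$, and its effectivity is equivalent, via Lemma \ref{b reduction}, to the anti-plurigenus nonvanishing $h^0(-(m-m_0-m_1)K_X)>0$. This is guaranteed by \cite[Appendix]{C} precisely because $m-m_0-m_1\geq 6$ when $m_0\geq 2$, and because $P_{-1}\geq 2$ forces $P_{-n}>0$ for all $n\geq 1$ when $m_0=1$ (so $m-m_0-m_1\geq 1$ suffices). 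Note that in your write-up the full numerical hypothesis is only used to make $Q$ nef and big, for which $m\geq m_0+m_1$ would already do; the ``$+6$'' (resp.\ ``$+2$'') exists exactly to feed this nonvanishing step, which is the key idea missing from your proposal.
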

\begin{proof}  Assuming $m\geq m_0+m_1$, we have $\varepsilon(m,0)>0$,  and Corollary \ref{m00} implies that  
$${\Lambda_m}|_S\succeq |K_S+L_{m}|.$$ 
It suffices to prove that $|K_S+L_{m}|$ can distinguish different generic irreducible elements of $|G|$.

For a suitable integer $m>0$, we have the following relations:
\begin{align*} 
K_S+L_m
={}& (K_Y)|_S+\roundup{(m+1)\pi^*(-K_X)-\frac{1}{\iota(m_0)}F_{m_0}}|_S\\
\geq{}& (K_Y+\roundup{(m+1-m_0-m_1)\pi^*(-K_X)})|_S+{M_{-m_1}}|_S.
\end{align*} 
Thus, if $|G|$ is not composed with an irrational pencil of curves,
$|K_S+L_m|$ can distinguish different irreducible elements provided that $$K_Y+\roundup{(m+1-m_0-m_1)\pi^*(-K_X)}$$ is effective, which holds for $m-m_0-m_1\geq 6$ whenever $m_0\geq 2$ (resp. $\geq 1$ whenever
$m_0=1$) by \cite[Appendix]{C}.

Assume $|G|$ is composed with an irrational pencil of curves. we
have
\begin{align*}
K_S+L_m \geq {}& K_S+\roundup{((m+1)\pi^*(-K_X)-\frac{1}{\iota(m_0)}F_{m_0}-S)|_S}\\
\geq{}& K_S+\roundup{((m-m_1+1)\pi^*(-K_X)-\frac{1}{\iota(m_0)}F_{m_0}-S)|_S}+{M_{-m_1}}|_S.
\end{align*} 
We can take $Q=((m-m_1+1)\pi^*(-K_X)-\frac{1}{\iota(m_0)}F_{m_0}-S)|_S$ in Lemma \ref{b S2} since $\varepsilon(m,0)>0$.

If $g(C)> 0$, Lemma \ref{b S2}(ii) implies that Assumption \ref{b asum}(2) is satisfied for $m\geq m_0+m_1$. 

If $g(C)=0$, by Lemma
\ref{b S2}(iii), we need the condition
$\varepsilon(m,0)=(m+1-\frac{m_0}{\iota}-m_1)\zeta=Q\cdot C>1$. But this holds automatically for $m\geq m_0+m_1$ by Proposition \ref{b zeta}(iv). 

We complete the proof.
\end{proof}

Now we can treat the birationality of $\varphi_{-m}$ using Theorem \ref{b mb}.

\begin{thm}[{cf. \cite[Theorems 4.1, 4.2, 4.5]{C}}]\label{b main} 
Let $X$ be a weak $\bQ$-Fano 3-fold.  Let $\nu_0$ be an integer such that $h^0(-\nu_0K_X)>0$. Keep the same notation as in Subsection \ref{b keythm}. Then $\varphi_{-m}$ is birational onto its image if one of the following holds:
\begin{itemize}
\item[(i)] $m\geq \max\{m_0+m_1+a(m_0), \rounddown{3\mu_0}+3m_1\}$;

\item[(ii)] $m\geq \max\{m_0+m_1+a(m_0), \rounddown{\frac{5}{3}\mu_0+ \frac{5}{3}m_1}, \rounddown{\mu_0}+m_1+2r_{\max}\}$;

\item[(iii)] $m\geq \max\{m_0+m_1+a(m_0), \rounddown{\mu_0}+m_1+2\nu_0r_{\max}\},$
\end{itemize}
where $a(m_0)= \begin{cases}
6, & {\rm if } \ m_0\geq 2;\\
1, &{\rm if } \ m_0=1.
\end{cases}$ 
\end{thm}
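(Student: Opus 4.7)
The plan is to reduce everything to a single application of Theorem \ref{b mb}, which requires two ingredients: Assumption \ref{b asum} and the numerical condition $\varepsilon(m)>2$, where $\varepsilon(m)=(m+1-\mu_0-m_1)\zeta$. The common hypothesis $m\geq m_0+m_1+a(m_0)$ appearing in all three parts is exactly the bound from Propositions \ref{b a1} and \ref{b a2}, which together guarantee that Assumption \ref{b asum} (both (1) and (2)) holds. Thus for every case (i)--(iii), that bound disposes of the ``distinguishing'' requirement, and the entire remaining task is to verify $\varepsilon(m)>2$ from the second bound in each of (i), (ii), (iii), using the appropriate lower estimate on $\zeta$ from Proposition \ref{b zeta}.

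The verification of $\varepsilon(m)>2$ will proceed by a trichotomy on the genus $g(C)$ of a generic irreducible element $C$ of $|G|$. When $g(C)=0$, Proposition \ref{b zeta}(iv) gives $\zeta\geq 2$, and the inequality $m\geq m_0+m_1+a(m_0)$ together with $\mu_0\leq m_0/\iota(m_0)\leq m_0$ already forces $m+1-\mu_0-m_1\geq 2$, so $\varepsilon(m)\geq 4>2$ with room to spare; this case therefore needs no second bound in any of (i)--(iii). For (i), the remaining range $g(C)\geq 1$ is handled by Proposition \ref{b zeta}(ii), which gives $\zeta\geq 1/(\mu_0+m_1)$; since $m\geq\rounddown{3\mu_0}+3m_1$ implies $m+1>3(\mu_0+m_1)$, we get $m+1-\mu_0-m_1>2(\mu_0+m_1)$ and hence $\varepsilon(m)>2$. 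For (ii), I would split $g(C)\geq 1$ further: when $g(C)=1$, use Proposition \ref{b zeta}(iii) so that $\zeta\geq 1/r_{\max}$, and the bound $m\geq\rounddown{\mu_0}+m_1+2r_{\max}$ yields $m+1-\mu_0-m_1>2r_{\max}$; when $g(C)\geq 2$, use Proposition \ref{b zeta}(ii) with $2g(C)-1\geq 3$ to get $\zeta\geq 3/(\mu_0+m_1)$, and the bound $m\geq\rounddown{\frac{5}{3}\mu_0+\frac{5}{3}m_1}$ gives $m+1>\frac{5}{3}(\mu_0+m_1)$, hence $m+1-\mu_0-m_1>\frac{2}{3}(\mu_0+m_1)$ and $\varepsilon(m)>2$. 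For (iii), Proposition \ref{b zeta}(v) furnishes the uniform bound $\zeta\geq 1/(\nu_0 r_{\max})$ valid regardless of $g(C)$, and then $m\geq\rounddown{\mu_0}+m_1+2\nu_0 r_{\max}$ immediately yields $m+1-\mu_0-m_1>2\nu_0 r_{\max}$, so $\varepsilon(m)>2$ as required.

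There is no deep obstacle in this proof; the whole argument is an assembly of already-established lemmas. The only subtle point to be careful about is the rounding: since $\mu_0$ (and possibly $\frac{5}{3}\mu_0+\frac{5}{3}m_1$) need not be an integer while $m$ is, one must use the elementary observation $m\geq\rounddown{x}$ combined with $m$ integer to conclude $m+1>x$, which is exactly what makes the floor functions in the stated bounds tight. With that observation in hand, every inequality above collapses to a direct computation, after which Theorem \ref{b mb} closes the argument immediately.
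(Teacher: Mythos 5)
Your proposal is correct and follows essentially the same route as the paper: Propositions \ref{b a1} and \ref{b a2} dispose of Assumption \ref{b asum} via the bound $m\geq m_0+m_1+a(m_0)$, and then $\varepsilon(m)>2$ is checked case by case on $g(C)$ using the lower bounds for $\zeta$ from Proposition \ref{b zeta}, exactly as in the paper's proof (your only deviation is merging $g(C)=1$ and $g(C)\geq 2$ in part (i) via $\zeta\geq 1/(\mu_0+m_1)$, a harmless simplification since the bound $\rounddown{3\mu_0}+3m_1$ dominates the $\tfrac{5}{3}$-bound anyway).
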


\begin{proof}
By Propositions \ref{b a1} and \ref{b a2}, Assumption \ref{b asum} is satisfied if $m\geq m_0+m_1+a(m_0)$.

By Proposition \ref{b zeta}(v), $\zeta\geq \frac{1}{\nu_0r_{\text{max}}}$. If $m\geq \rounddown{\mu_0}+m_1+2\nu_0r_{\text{max}}$, then $\varepsilon(m)=
(m+1-\mu_0-m_1)\zeta>2$, which implies (iii). 

For (i) and (ii), we will discuss on the value of $g(C)$.
\smallskip

{\bf Case 1.} $g(C)=0.$

By Proposition \ref{b zeta}(iv), $\zeta\geq 2$. If $m\geq \rounddown{\mu_0}+m_1+1$, then $\varepsilon(m)=
(m+1-\mu_0-m_1)\zeta>2$.
\smallskip

{\bf Case 2.} $g(C)\geq 2.$

By Proposition \ref{b zeta}(ii), $\zeta\geq \frac{3}{\mu_0+m_1}$. If $m\geq \rounddown{\frac{5}{3}\mu_0+\frac{5}{3}m_1}$ then $\varepsilon(m)\geq
(m+1-\mu_0-m_1)\zeta>2$.
\smallskip

{\bf Case 3.} $g(C)=1$.

By Proposition \ref{b zeta}(ii), $\zeta\geq  \frac{1}{\mu_0+m_1}$. If $m\geq \rounddown{3\mu_0}+3m_1$, then $\varepsilon(m)=(m+1-\mu_0-m_1)\zeta>2$. So we have proved (i). 
On the other hand, by Proposition \ref{b zeta}(iii), $\zeta\geq \frac{1}{r_{\text{max}}}$. If $m\geq \rounddown{\mu_0}+m_1+2r_{\text{max}}$, then $\varepsilon(m)=(m+1-\mu_0-m_1)\zeta>2$. Thus (ii) is proved. 
%In summary,  Theorem \ref{b mb} implies that $\varphi_m$ is birational if $m$ satisfies one of the conditions.
\end{proof}

In practice, usually we just  use the fact $\mu_0\leq \frac{m_0}{\iota(m_0)}\leq m_0$. For very few cases, we will utilize a precise upper bound of $\mu_0$ rather than $m_0$ by Remark \ref{upper mu0}. 

Theorem \ref{b main} is optimal in some cases due to the following examples.
\begin{example}[{\cite[List 16.6]{Fletcher}}]
Consider general weighted hypersurface $X_{6d}\subset\mathbb{P}(1,a,b,2d,3d)$ where $1\leq a \leq b$ and $d=a+b$ such that $X_{6d}$ is a $\mathbb{Q}$-Fano $3$-fold with $r_{\text{max}}=d$. By {\cite[List 16.6]{Fletcher}}, there are exactly 12 such examples. Then $\varphi_{-3d}$ is birational onto its image but $\varphi_{-(3d-1)}$ is not by the structure. On the other hand, We can take $\nu_0=1$, $m_0=\mu_0=a$, and $m_1=b$, then
\begin{align*}
3d={}&\rounddown{3\mu_0}+3m_1\\
={}&\rounddown{\mu_0}+m_1+2r_{\text{max}}\\
={}&\rounddown{\mu_0}+m_1+2\nu_0r_{\text{max}}.
\end{align*}
Hence Theorem \ref{b main} tells that $\varphi_{-m}$ is birational onto its image for all $m\geq 3d$.
\end{example}

Theorem \ref{b main} directly implies the following result which generalizes a result of Fukuda \cite[Main theorem]{F}.

\begin{cor} Let $X$ be a weak $\bQ$-Fano $3$-fold with Gorenstein singularities. Then $\varphi_{-m}$ is birational onto its image for all $m\geq 4$.
\end{cor}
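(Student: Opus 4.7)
The plan is to apply Theorem \ref{b main}(iii) with the minimal parameters $m_0=m_1=\nu_0=1$. The Gorenstein hypothesis is precisely what makes this choice work: every local index $r_i$ equals $1$, so $r_{\max}=1$, and all estimates collapse.

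First I would invoke the classical general elephant theorem (Shokurov, extended to the weak Gorenstein setting), which says that a general member $S\in|{-K_X}|$ is an irreducible, reduced, smooth K3 surface. From the short exact sequence
\[
0\to \OO_X\to \OO_X(-K_X)\to \OO_S(-K_X|_S)\to 0,
\]
together with $H^1(\OO_X)=0$ (Kawamata--Viehweg, applied with $L=-K_X$) and Riemann--Roch on the K3 surface $S$ (using $(-K_X|_S)^2=(-K_X)^3>0$), one gets $P_{-1}=3+\tfrac{1}{2}(-K_X)^3$. Since $P_{-1}\in \ZZ$ and $(-K_X)^3$ is a positive integer, $(-K_X)^3$ must be even, and consequently $P_{-1}\geq 4$. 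This immediately rules out $|{-K_X}|$ being composed with a pencil: if it were, the decomposition $|{-K_X}|=|(P_{-1}-1)S'|+F$ from Subsection \ref{b setting} would force every member of $|{-K_X}|$ to contain the moving divisor $S'$ with multiplicity at least $P_{-1}-1\geq 3$, contradicting the reducedness of the smooth K3 general member.

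With this in hand I may legitimately take $m_0=m_1=1$ in the setup of Subsection \ref{b keythm}: since $|{-K_X}|$ is not composed with a pencil, the convention in Subsection \ref{b setting} says the two (identical) linear systems are ``not composed with the same pencil''. Then $\iota(m_0)=1$, whence $\mu_0\leq m_0/\iota(m_0)=1$ by Remark \ref{upper mu0}; since $P_{-1}>0$ we may take $\nu_0=1$; and $a(1)=1$, $r_{\max}=1$. Substituting into Theorem \ref{b main}(iii) gives
\[
m\ \geq\ \max\bigl\{m_0+m_1+a(m_0),\ \lfloor\mu_0\rfloor+m_1+2\nu_0 r_{\max}\bigr\}\ =\ \max\{3,\ \lfloor\mu_0\rfloor+3\}\ \leq\ 4,
\]
so $\varphi_{-m}$ is birational onto its image for every $m\geq 4$. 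The only non-routine ingredient is the general elephant theorem in the weak Gorenstein Fano setting, which is by now classical; everything else is direct substitution into Theorem \ref{b main}(iii).
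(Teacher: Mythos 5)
Your endgame is exactly the paper's first case: with $m_0=m_1=\nu_0=1$, $r_{\max}=1$, $\mu_0\leq 1$ and $a(1)=1$, Theorem \ref{b main}(iii) gives birationality for $m\geq 4$, and your computation of $P_{-1}=3+\frac{1}{2}(-K_X^3)\geq 4$ agrees with the paper's use of Reid's formula. The divergence is how the possibility that $|-K_X|$ is composed with a pencil is dealt with, and this is where your argument has a real weak point. You dispose of it by invoking ``the classical general elephant theorem (Shokurov, extended to the weak Gorenstein setting)'' asserting that a general member of $|-K_X|$ is a \emph{smooth} K3. As stated this is not a classical theorem: Shokurov's result is for nonsingular Fano $3$-folds; for Fano $3$-folds with Gorenstein canonical singularities Reid's general elephant theorem gives an irreducible member with at worst Du Val singularities (smoothness can fail, e.g.\ when the base locus meets the singular points), and the weak (nef and big, non-ample) case requires a further reduction to the anticanonical model which you do not supply. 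Only irreducibility and reducedness are actually used, so the step is repairable, but as written it rests on an overstated and unjustified external input; also, a general member of the movable part of a pencil-composed system is a sum of $P_{-1}-1$ \emph{distinct} members of the pencil rather than one member with multiplicity $P_{-1}-1$ (the reducibility contradiction survives, but the statement is wrong).

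The paper avoids any such input by simply allowing the pencil case and handling it inside its own framework: if $|-K_X|$ is composed with a pencil, then $\iota(1)=P_{-1}-1\geq 3$, so $\mu_0\leq \frac{1}{P_{-1}-1}<\frac{1}{2}$ by Remark \ref{upper mu0}; Reid's formula gives $P_{-2}=\frac{5}{2}(-K_X^3)+5>2(-K_X^3)+1=r_X(-K_X^3)\cdot 2+1$, so by Corollary \ref{b non-pencil} the system $|-2K_X|$ is not composed with a pencil and one may take $m_1=2$; then Theorem \ref{b main}(iii) again yields $\lfloor\mu_0\rfloor+m_1+2\nu_0 r_{\max}=4$. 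You should either substitute this two-line argument for your general-elephant appeal, or, if you insist on the elephant route, cite Reid's theorem in its correct form (irreducible, Du Val) and explain the passage from the weak Fano $X$ to its anticanonical model.
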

\begin{proof}
By Reid's formula, $P_{-1}=\frac{1}{2}(-K_X^3)+3>3$. Hence we can take $m_0=\nu_0=1$. 

If $|-K_X|$ is not composed with a pencil, then we can take $m_1=1$ and $\mu_0\leq m_0=1$. The result follows directly from Theorem \ref{b main}(iii).

If $|-K_X|$ is  composed with a pencil, then $\mu_0\leq \frac{m_0}{\iota(m_0)}<\frac{1}{2}$.
By Reid's formula again, $P_{-2}=\frac{5}{2}(-K_X^3)+5>r_X(-K_X^3)2+1$.  We can take $m_1=2$
 by Corollary \ref{b non-pencil}. The result follows directly from Theorem \ref{b main}(iii).
\end{proof}

\subsection{Proof of Theorems \ref{birationality1} and \ref{birationality2}}\ 

Now we prove the main results on the birationality of $\varphi_{-m}$.

\begin{proof}[Proof of Theorem \ref{birationality1}] To apply Theorem \ref{b main}, we always use the fact $\mu_0\leq m_0$. By \cite[Theorem 1.1]{CC} and  Theorem \ref{main1}, we can take $m_0\leq 8$ and $m_1\leq 10$ to apply Theorem \ref{b main}(i) and (ii). Hence $m_0+m_1+6\leq 24$ and $\frac{5}{3}(m_0+m_1)\leq 30$. By Theorem \ref{b main}, it is sufficient to prove that  either $3m_0+3m_1\leq 39$ or $m_0+m_1+2r_{\text{max}}\leq 39$ holds if we choose suitable $m_0$ and $m_1$. (Note that $\nu_0$ is not used in this proof.) 
\smallskip

{\bf Case 1.} $P_{-1}\geq 2.$

In this case, we can take $m_0=1$ and $m_1\leq 6$ by %Corollary \ref{p3} and 
Theorem \ref{p2}. Hence  $3m_0+3m_1\leq 21$.
\smallskip

{\bf Case 2.} $P_{-1}=1.$

Recall the proof of Theorem \ref{p1}. We take $m_0=n_0$. If $m_0\leq 5$, then  we can take $m_1\leq 7$ and hence $3m_0+3m_1\leq 36$.
Similarly, if $m_0=6$ and if we can take $m_1\leq 7$, then $3m_0+3m_1\leq 39$.

If $m_0=n_0=6$ and $\delta_1(X)=8$, we can take $m_1=8$. Theorem \ref{k2} implies that
$$
P_{-1}=P_{-2}=P_{-3}=P_{-4}=P_{-5}=1, P_{-6}=P_{-7}=2. 
$$
Then  $n_{1,2}^0=2$, $n_{1,3}^0=2$, $n_{1,4}^0=2-\sigma_5$, $\epsilon_5=2-\sigma_5$, $0=\epsilon_6=3-\epsilon$.  Hence $\epsilon=3$ and $\sigma_5\leq 2$, and this implies $(\sigma_5,n_{1,5}^0)=(2,1)$. Then $\epsilon_5=0$ and $B^{(5)}(B)=\{ 2\times(1,2), 2\times(1,3), (1,5),(1,s)\}$ for some $s\geq 6$.  This implies $\epsilon_7=0$ since there are no further packings. On the other hand, $\epsilon_7=2-2\sigma_5+2n_{1,5}^0+n_{1,6}^0$. Hence $n_{1,6}^0=0$ and $B^{(7)}=\{ 2\times(1,2), 2\times(1,3), (1,5),(1,s)\}$ with $s\geq 7$. Since $B^{(7)}$ admits no prime packings, $B=B^{(7)}$. By inequalities (\ref{kwmt}) and (\ref{vol}), $s$ can only be $8, 9,10$. Hence $m_0+m_1+2{r_{\text{max}}}\leq  6+8+2\times 10=34$.

If $m_0=n_0\geq 7$, then
$$
P_{-1}=P_{-2}=P_{-3}=P_{-4}=P_{-5}=P_{-6}=1. 
$$
The proof of Theorem \ref{p1} implies $B^{(5)}=\{(1,2), (2,5), (1,3),(1,4),(1,s)\}$ with $s \ge 6$. Since $\gamma(B^{(5)}) >0$, we have $s\leq 11$. Noting that $B$ is dominated by $B^{(5)}$, we see $r_{\text{max}}\leq 11$. By Theorem \ref{p1}, we can take $m_0\leq 8$ and $m_1\leq 9$. Hence $m_0+m_1+2{r_{\text{max}}}\leq  8+9+2\times 11=39$.
\smallskip

{\bf Case 3.} $P_{-1}=P_{-2}=0.$

By the proof of Theorems \ref{p0} and   \ref{ex}, if $B$ is of type No.1, No.2 or No.4, then we have $r_{\text{max}}\leq 10$ and may take $m_0=8$, $m_1=10$. Hence $m_0+m_1+2{r_{\text{max}}}\leq  8+10+2\times 10=38$. If $B$ is of type No.5--No.6, then we have $r_{\text{max}}\leq 7$ and make take $m_0=7$, $m_1=8$. Hence $m_0+m_1+2{r_{\text{max}}}\leq  7+8+2\times 7=29$. If $B$ is of type No.7--No.23, then we can take $m_0=m_1=6$. Hence $3m_0+3m_1\leq 36$. Now the remaining case is type No.3:
$$
\{5\times (1,2), 2\times(1,3),(3,11)\}.
$$
Recall that $P_{-8}=P_{-9}=2$ and $-4K_X\sim E$ is a prime divisor by the proof of Theorem \ref{ex}(i).
By the proof of Theorem \ref{k2}, $|-8K_X|$ has no fixed part.  If $|-8K_X|$ and $|-9K_X|$ are composed with a same pencil, we can write
\begin{align*}
|-8K_X|{}&=|S'|,\\
|-9K_X|{}&=|S'|+F,
\end{align*}
where $F$ is the fixed part. This implies that
$$-K_X\sim -9K_X-(-8K_X)=F,$$ 
which contradicts $P_{-1}=0$. Hence $|-8K_X|$ and $|-9K_X|$ are composed with different pencils, and we can take $m_0=8$, $m_1=9$, and  $m_0+m_1+2r_{\max}=39$. 
\smallskip

{\bf Case 4.} $P_{-1}=0, P_{-2}>0$.

By \cite[Proposition 3.10, Case 1]{CC}, we can take $m_0=6$. We can take $m_1$ the same as in the proof of Theorems \ref{p0}  and \ref{ex}. If $m_1\leq 6$, then $3m_0+3m_1\leq 36$. If $m_1\geq 7$, observing Subsubcases II-3-ii and  II-3-iii in the proof of Theorem \ref{p0}, we can see that $r_{\text{max}}\leq 11$ holds for any such basket except $$B_d=\{4\times(1,2),(6,13),(1,5)\}.$$ Except for $B_d$, we have $m_0+m_1+2{r_{\text{max}}}\leq  6+8+2\times 11=36$. Now we deal with $B_d$. We claim that we can take $m_1=7$. Recall that $$P_{-1}=P_{-3}=0, P_{-2}=P_{-4}=P_{-5}=1, P_{-6}=P_{-7}=2.$$ Clearly $|-6K_X|$ and $|-7K_X|$ are both composed with pencils. We only need to show that they are composed with different pencils. To the contrary, we assume that $|-6K_X|$ and $|-7K_X|$ are composed with the same pencil.  If $-2K_X\sim D$ is a prime divisor, then by the proof of Theorem \ref{k2}, $|-6K_X|$ has no fixed part. By assumption, we can write
\begin{align*}
|-6K_X|{}&=|S'|,\\
|-7K_X|{}&=|S'|+F,
\end{align*}
where $F$ is the fixed part. This implies that
$$-K_X\sim -7K_X-(-6K_X)=F,$$ 
a contradiction. Hence $-2K_X\sim D$ is not a prime divisor. By the proof of Theorem \ref{ex}(ii), $D=E_1+E_2$ with $E_1$ and $E_2$ different prime divisors. Also we can write
\begin{align*}
|-6K_X|{}&=|S'|+a_6E_1,\\
|-7K_X|{}&=|S'|+F,
\end{align*}
where $a_6E_1$ and $F$ are the fixed parts with $a_6\leq 3$. If $a_6\leq 1$, then
$$
S'\sim 3(E_1+E_2)-a_6E_1\geq 2E_1+2E_2\sim -4K_X.
$$
This implies $|-7K_X|\succeq |-4K_X|$, which contradicts $P_{-3}=0$. If $a_6=3$, as in the proof of Theorem \ref{k1}, take $m=6$ and $E=E_1$ or $2E_1$ or $3E_1$, inequality (\ref{qq}) must fail for some singularity $P$ in $B_d$. Clearly, such an offending singularity $P$ must be ``$(6,13)$''. By equality (\ref{qq2}), the local index $i_P(E)$ of $E$ can only be $9$ or $11$ since  inequality  (\ref{qq}) holds for other $0\leq i\leq 12$ and $(b,r)=(6,13)$. But clearly the local index $i_P(E_1)$, $i_P(2E_1)$, and $i_P(3E_1)$ can not be in the set $\{9,11\}$ simultaneously,  a contradiction. 
Finally we consider the case $a_6=2$. Write $-5K_X\sim B$ a fixed divisor. Then
$$
B+S'+2E_1\sim-5K_X-6K_X\sim -4K_X-7K_X\sim 2E_1+2E_2+ S'+F,
$$
that is, $B\sim 2E_2+F$. Obviously, $F\not =0$. As in the proof of Theorem \ref{k1}, take $m=5$ and $E=E_1$ or $2E_1$, inequality (\ref{qq}) must fails for some singularity $P$ in $B_d$. Clearly, such an offending singularity $P$ must be ``$(6,13)$''. By equality (\ref{qq2}), the local index $i_P(E)$ of $E$ can only be $10$ or $11$ since  inequality  (\ref{qq}) holds for other $0\leq i\leq 12$ and $(b,r)=(6,13)$. But clearly the local index $i_P(E_1)$, $i_P(2E_1)$ can not be in the set $\{10,11\}$ simultaneously,  a contradiction.

We complete the proof.
\end{proof}

%To prove Corollary \ref{b cor2}, we need to modify Theorem \ref{b main} a little.
%\begin{thm}\label{b main2} 
%Let $X$ be a weak $\bQ$-Fano 3-fold. Let $m_1$ be an integer such that $\dim\overline{\varphi_{-m_1}(X)}>1$. Moreover if we assume that $P_{-m_1}\geq r_X (-K_X^3)m_1+2$, then $\varphi_{-m}$ is birational onto its image for all $m\geq 3m_1$. 
%\end{thm}
%begin{proof}
%We that $m_0=m_1$. Note that $\iota > m_1$ since $r_X (-K_X^3)$ is a positive integer. The rest of the proof is the same as Theorem \ref{b main}. 
%\end{proof}

\begin{proof}[Proof of Theorem \ref{birationality2}] 
We shall apply Theorem \ref{b main} to treat arbitrary weak $\bQ$-Fano $3$-folds. We will choose suitable $m_0$ and $m_1$. Unless otherwise specified, we will use the fact $\mu_0\leq m_0$.
\smallskip

{\bf Case I.} $P_{-2}=0. $

In this case, the possible baskets are classified in Proposition \ref{list}. From the list we can take $m_0=8$. We have $r_X\leq 210$, $-K_X^3\geq \frac{1}{84}$, and $r_{\text{max}}\leq 14$. By Proposition \ref{b thm1} with $t=8$, we can take $m_1=38$. Hence by Theorem \ref{b main}(ii),   $\varphi_{-m}$ is birational onto its image for all $m\geq 76$.
\smallskip

{\bf Case II.} $r_{\text{max}}\geq 14. $

Write Reid's basket $B_X$ as 
$$\{(b_i,r_i)\mid i=1,\cdots, s; 0<b_i\leq \frac{r_i}{2};b_i \text{ is coprime to } r_i\}.$$ Recall that $r_X=\text{l.c.m.}\{r_i\mid i=1,\cdots, s\}$ and that 
\begin{align*}
\sum_i\big(r_i-\frac{1}{r_i}\big)\leq 24
\end{align*}
by inequality (\ref{kwmt2}).  We recall the sequence $\mathcal{R}=(r_i)_i$ from the proof of Proposition \ref{bound index}. Denote by $\tilde{r}_1=r_{\text{max}}$ the largest value in $\mathcal{R}$, by $\tilde{r}_2$ the second largest value, and by $\tilde{r}_3$, $\tilde{r}_4$ the third, the forth, and so on. For instance, if $\mathcal{R}=(2,3,4,4,5,5)$, then $\tilde{r}_1=5$, $\tilde{r}_2=4$, $\tilde{r}_3=3$, and $\tilde{r}_4=2$. If the value $\tilde{r}_j$ does not exist by definition, then we set $\tilde{r}_j=1$. In the previous example, we have $\tilde{r}_5=1$. 

Clearly $r_{\text{max}}\leq 24$. We will compute an explicit bound for $r_X$.

If $r_{\text{max}}\geq 23$, then by inequality (\ref{kwmt2}), there are no more values in  $\mathcal{R}$. Hence $r_X\leq 24$.

%If $r_1 = 22$, then by inequality (\ref{kwmt2}), $r_2\leq 2$, hence $r_X\leq 22$.

%If $ r_1 =21$, then by inequality (\ref{kwmt2}), $r_2\leq 3$ and $r_3=1$, hence $r_X\leq 21\times 2= 42$.

%If $ r_1 =20$, then by inequality (\ref{kwmt2}), $r_2\leq 4$ and $r_3=1$, hence $r_X\leq 20\times 3= 60$.
If $20\leq r_{\text{max}} \leq 22$, then by inequality (\ref{kwmt2}), $\rr_2\leq 4$. Hence $$r_X\leq {\rm l.c.m}(r_{\text{max}},  4, 3, 2)= 132.$$

If $ r_{\text{max}} =19$, then by inequality (\ref{kwmt2}), $\rr_2\leq 5$, and at most one of $3,4,5$ can be in $\mathcal{R}$. Hence $r_X\leq19\times 5\times 2= 190$.

If $ r_{\text{max}} =18$, then by inequality (\ref{kwmt2}), $\rr_2\leq 6$, and at most one of $3,4,5,6$ can be in $\mathcal{R}$. Hence $r_X\leq18\times 5 = 90$.

If $ r_{\text{max}} =17$, then by inequality (\ref{kwmt2}), $\rr_2\leq 7$. If $\rr_2\geq 5$, then by inequality (\ref{kwmt2}), $\rr_3\leq 2$ and hence $\rr_X\leq 17\times 7\times 2=238$.  If $\rr_2\leq 4$, then  $r_X\leq {\rm l.c.m}(17,4,3,2)=204$. 

If $ r_{\text{max}} =16$, then by inequality (\ref{kwmt2}), $\rr_2\leq 8$. If $\rr_2\geq 6$, then  by inequality (\ref{kwmt2}), $\rr_3\leq 2$ and hence $r_X\leq 16\times 7=112$.  If $\rr_2\leq 5$, then $r_X\leq {\rm l.c.m}(16,5,4,3,2)=240$. 

If $ r_{\text{max}} =15$, then by inequality (\ref{kwmt2}), $\rr_2\leq 9$. If $\rr_2\geq 6$, then  by inequality (\ref{kwmt2}), $\rr_3\leq 3$ and hence $r_X\leq {\rm l.c.m}(r_{\text{max}},\rr_2,3,2)\leq 15\times 7\times 2=210$.  If $\rr_2\leq 5$, then $r_X\leq {\rm l.c.m}(15,5,4,3,2)=60$. 

If $ r_{\text{max}} =14$, then by inequality (\ref{kwmt2}), $\rr_2\leq 10$. If $\rr_2\geq 8$, then  by inequality (\ref{kwmt2}), $\rr_3\leq 2$ and hence $\rr_X\leq   14\times 9 =126$.  If $\rr_2\leq 7$, then $r_X$ divides ${\rm l.c.m}(14,6, 5,4,3,2)=420$. But  by inequality (\ref{kwmt2}), $5,4,3$ can not be in $\mathcal{R}$ simultaneously, hence $r_X<420$. In particular, $r_X\leq 210$.

In summary, when $r_{\text{max}}\geq 14$, we have $r_X\leq 240$. 

We can take $m_0=8$ by \cite[Theorem 1.1]{CC}. We have $r_X\leq 240$, $-K_X^3\geq \frac{1}{240}$ (note that $r_XK_X^3$ is an integer), and $r_{\text{max}}\leq 24$. If $r_{\text{max}}\leq 22$, by Proposition \ref{b thm1} with $t=6$, we can take $m_1=44$. Hence by Theorem \ref{b main}(ii),   $\varphi_{-m}$ is birational onto its image for all $m\geq 96$. If $r_{\text{max}}=23$ or $24$, by Proposition \ref{b thm1} with $t=2$, $r_X\leq 24$, $-K_X^3\geq \frac{1}{24}$, we can take $m_1=37$. Hence by Theorem \ref{b main}(ii),   $\varphi_{-m}$ is birational onto its image for all $m\geq 93$.
\smallskip

{\bf Case III.} $r_{\text{max}}<14$ and $P_{-1}>0$.

In this case, $\nu_0=1$ and  by \cite[Theorem 1.1]{CC}, we can take $m_0=8$. 

If $r_X\leq 660$  and $r_{\text{max}}\leq 12$, then by Proposition \ref{b thm1} with $t=15$, $r_{\text{max}}\leq 12$, and $-K_X^3\geq \frac{1}{330}$, we can take $m_1=65$. Hence by Theorem \ref{b main}(iii),   $\varphi_{-m}$ is birational onto its image for all $m\geq 97$.

If $r_X\leq 660$  and $r_{\text{max}}= 13$, Then $\rr_2\leq 11$. If $\rr_2\geq 9$, then $\rr_3\leq 2$ and $r_X\leq 286$.
If $\rr_2=8$, then $\rr_3\leq 3$ and $r_X\leq 312$. If $\rr_2=7$, then $\rr_3\leq 4$ and $3,4$ can not be in $\mathcal{R}$ simultaneously, hence $r_X\leq 546$.  If $\rr_2\leq6$, then $r_X$ divides $780$ and hence $r_X\leq 390$ by Proposition \ref{bound index}. In summary, $r_X\leq 546$. By Proposition \ref{b thm1} with $t=10$, $r_{\text{max}}= 13$, and $-K_X^3\geq \frac{1}{330}$, we can take $m_1=61$. Hence by Theorem \ref{b main}(iii),   $\varphi_{-m}$ is birational onto its image for all $m\geq 95$.

If $r_X>660$, then $r_X=840$ and $r_{\text{max}}=8$. By Theorem \ref{main2}, we can take $m_1=71$. Hence by Theorem \ref{b main}(iii),   $\varphi_{-m}$ is birational onto its image for all $m\geq 95$.
\smallskip

{\bf Case IV.} $r_{\text{max}}<14$, $P_{-1}=0$, and $P_{-2}>0$.

In this case, $\nu_0=2$ and  by \cite[Proposition 3.10, Case 1]{CC}, we can take $m_0=6$. 

If $P_{-4}=1$, then $P_{-2}=1$. By the proof of Theorem \ref{p0} (note that the arguments on baskets are valid without assuming $\rho=1$),  we are exactly in the situation $(P_{-3},P_{-4})=(0,1)$, corresponding to the last paragraph of Subsubcase II-3-iii of Theorem \ref{p0}.  In fact, the possible baskets are classified in the following list:
\begin{align*}
{}&\{9\times (1,2), (1,3), (1,7) \},\\
{}&\{8\times (1,2), (2,5), (1,7) \},\\
%&\{9\times (1,2), (1,3), (1,6) \},\\
{}&\{8\times (1,2), (2,5), (1,6) \},\\ 
{}&\{7\times (1,2), (3,7), (1,6) \},\\
{}&\{6\times (1,2), (4,9), (1,6)\},\\
{}&\{7\times (1,2), (3,7), (1,5) \},\\ 
{}&\{6\times (1,2), (4,9), (1,5) \},\\
{}&\{5\times (1,2), (5,11), (1,5) \},\\
{}&\{4\times (1,2), (6,13), (1,5) \}.
\end{align*}
Hence in this case $r_X\leq 130$, $-K_X^3\geq \frac{1}{130}$, and $r_{\text{max}}\leq 13$.  By Proposition \ref{b thm1} with $t=7$, we can take $m_1=37$. Hence by Theorem \ref{b main}(iii),   $\varphi_{-m}$ is birational onto its image for all $m\geq 95$.

Hence, from now on, we assume that $P_{-4}>1$.  So we may take $m_0=4$.

If $r_{\text{max}}\leq 8$, then $r_X$ divides ${\rm l.c.m}(8, 7,6,5,4,3,2)=840$. Suppose $r_X<840$, then $r_X\leq 420$.  By Proposition \ref{b thm1} with $t=20$ and $-K_X^3\geq \frac{1}{330}$, we can take $m_1=54$. Hence by Theorem \ref{b main}(iii),   $\varphi_{-m}$ is birational onto its image for all $m\geq 90$. Suppose $r_X=840$, then $\mathcal{R}=(3,5,7,8)$ or $(2,3,5,7,8)$ as we have seen in the proof of Proposition \ref{bound index}. However, 
\begin{align}
P_{-1}{}&=\frac{1}{2}(-K_X^3)-\sum\frac{b_i(r_i-b_i)}{2r_i}+3\notag\\
{}&>3-\frac{1}{4}-\frac{2}{6}-\frac{6}{10}-\frac{12}{14}-\frac{15}{16}>0\label{840},
\end{align}
a contradiction.

The above argument reminds us to find a condition corresponding to $P_{-1}=0$.
Assume that $2$ is not in $\mathcal{R}$, then 
\begin{align*}
P_{-1}{}&=\frac{1}{2}(-K_X^3)-\sum\frac{b_i(r_i-b_i)}{2r_i}+3\\
{}&>3-\frac{1}{8}\sum\big(r_i-\frac{1}{r_i}\big)\geq 0,
\end{align*}
a contradiction. Hence, $2\in \mathcal{R}$.

Consider the case $r_{\text{max}}=9$. If  $\rr_2\leq 6$, then $r_X\leq {\rm l.c.m}(9, 6,5,4,3,2)=180$. If $\rr_2=8$, then by inequality (\ref{kwmt2}) and $2\in \mathcal{R}$, $\rr_2\leq 5$ and $r_X\leq {\rm l.c.m}(9,8,5,4,3,2)=360$.  If  $\rr_2=7$ and $5\not\in \mathcal{R}$, then $$r_X\leq \lcm(9,7,6,4,3,2)=252.$$  If  $\rr_2=7$ and $5\in \mathcal{R}$, then $6 \not\in \mathcal{R}$ and $r_X$ divides ${\rm l.c.m}(9,7,5,4,3,2)=630$. In summary, $r_X\leq 360$ or $r_X=630$. Whenever $r_X\leq 360$, by Proposition \ref{b thm1} with $t=12$ and $-K_X^3\geq \frac{1}{330}$, we can take $m_1=50$. Hence by Theorem \ref{b main}(iii),   $\varphi_{-m}$ is birational onto its image for all $m\geq 90$.  Whenever $r_X=630$, then $2,5,7,9$ must be in $\mathcal{R}$. Hence $\mathcal{R}=(2,5,7,9)$ or $(2,2,5,7,9)$ by inequality (\ref{kwmt2}). In this case, by arguing as inequality (\ref{840}), $B_X$ can only be $\{2\times (1,2), (2,5),(3,7),(4,9)\}$. We will choose suitable $m_1$ and modify the upper bound of $\mu_0$. Since $P_{-4}=2$, $|-4K_X|$ is composed with a pencil. 
Note that $P_{-7}=10$ and $P_{-3}=1$.
If  $|-7K_X|$ is not composed with a pencil,  then we can take $m_1=7$. By Theorem \ref{b main}(ii),   $\varphi_{-m}$ is birational onto its image for all $m\geq 29$.
If  $|-7K_X|$ is also composed with a pencil, then we know $\mu_0\leq \frac{7}{9}$ by Remark \ref{upper mu0}. 
Also we can see $P_{-61}=5294>r_X(-K_X^3)61+1$ by direct computation using Reid's formula where $-K_X^3=\frac{43}{315}$. Hence we can take $m_1=61$ by Corollary \ref{b non-pencil}. 
Hence by  Theorem \ref{b main}(iii), $\varphi_m$ is
birational for all $m\geq 97$.

Consider the case $r_{\text{max}}=10$.  If  $\rr_2\leq 6$, then $$r_X\leq {\rm l.c.m}(10, 6,5,4,3,2)=60.$$ If  $\rr_2=7$, then $r_X$ divides  ${\rm l.c.m}(10, 7, 5, 4, 3, 2)=420$, but $3,4$ can not  be in $\mathcal{R}$ simultaneously, hence $r_X\leq 210$. If  $\rr_2=8$, then $r_3\leq 4$ and  $r_X\leq {\rm l.c.m}(10, 8, 4,3,2)=120$. If  $\rr_2=9$, then $\rr_3\leq 3$ and  $r_X\leq {\rm l.c.m}(10, 9 ,3,2)=90$. Hence in summary, $r_X\leq {210}$. By Proposition \ref{b thm1} with $t=10$ and $-K_X^3\geq \frac{1}{210}$, we can take $m_1=39$. Hence by Theorem \ref{b main}(ii),   $\varphi_{-m}$ is birational onto its image for all $m\geq 71$. 

Consider the case $r_{\text{max}}=11$.  If  $\rr_2=10$, then $\rr_3\leq 2$ and $r_X\leq110$. If  $\rr_2=9$ or $8$, then $\rr_3\leq 3$ and $r_X\leq 264$.
If  $\rr_2=7$, then $\rr_3\leq 4$ and $3,4$ can not be in $\mathcal{R}$ simultaneously, hence $r_X\leq 308$ or $r_X={\rm l.c.m}(11, 7,3,2)=462$. If  $\rr_2=6$, then  $5,4$ can not be in $\mathcal{R}$ simultaneously, hence $r_X\leq {\rm l.c.m}(11, 6,5 ,3,2)=330$.  If  $\rr_2\leq5$, then   $r_X$ divides ${\rm l.c.m}(11, 5,4 ,3,2)=660$. In summary, $r_X\leq 330$ or $r_X=462$ or $r_X=660$. 
Whenever $r_X=660$, then $2,3,4,5,11$ must be in $\mathcal{R}$. Hence $\mathcal{R}=(2,3,4,5,11)$ by inequality (\ref{kwmt2}). By arguing as inequality (\ref{840}), this implies $P_{-1}>0$, a contradiction. Whenever $r_X\leq 330$, by Proposition \ref{b thm1} with $t=13$ and $-K_X^3\geq \frac{1}{330}$, we can take $m_1=48$. Hence by Theorem \ref{b main}(ii),   $\varphi_{-m}$ is birational onto its image for all $m\geq 86$. If $r_X=462$, then $2,3,7,11$ must be in $\mathcal{R}$. Hence $\mathcal{R}=(2,3,7,11)$ or $(2,2,3,7,11)$ by inequality (\ref{kwmt2}). By arguing as inequality
(\ref{840}), $B_X$ can only be $\{2\times (1,2),(1,3),(3,7),(5,11)\}$. In this case we can prove $P_{-52}=2612>r_X(-K_X^3)52+1$ by direct computation using Reid's formula where $-K_X^3=\frac{50}{462}$. Hence we can take $m_1=52$. By Theorem \ref{b main}(ii),   $\varphi_{-m}$ is birational onto its image for all $m\geq 93$.

Consider the case $r_{\text{max}}=12$. Then $\rr_2\leq 10$ and at most one of $5,6,7,8,9,10$ will be in  $\mathcal{R}$. Hence $r_X\leq 84$. By Proposition \ref{b thm1} with $t=5$ and $-K_X^3\geq \frac{1}{84}$, we can take $m_1=37$. Hence by Theorem \ref{b main}(ii),   $\varphi_{-m}$ is birational onto its image for all $m\geq 68$.

Finally, consider the case $r_{\text{max}}=13$. Then $\rr_2\leq 9$. If $\rr_2=9$ or $8$, then $\rr_3\leq 2$ and $r_X\leq 234$.  If $\rr_2=7$, then $\rr_3\leq 3$ and $r_X=546$ or $182$. If $\rr_2\leq 6$, then $r_X$ divides $780$ and hence $r_X\leq 390$ by Proposition \ref{bound index}. In summary, $r_X\leq 390$ or $r_X=546$. Whenever $r_X\leq 390$, by Proposition \ref{b thm1} with $t=12$ and $-K_X^3\geq \frac{1}{330}$, we can take $m_1=52$. Hence by Theorem \ref{b main}(ii),   $\varphi_{-m}$ is birational onto its image for all $m\geq 93$. Whenever $r_X=546$, then $\mathcal{R}=(2,3,7,13)$. Argue as inequality (\ref{840}), $B_X$ can only be $\{(1,2),(1,3),(3,7),(6,13)\}$. 
We will choose suitable $m_1$ and modify the upper bound of $\mu_0$. Since $P_{-4}=2$, $|-4K_X|$ is composed with a pencil. 
Note that $P_{-10}=21$ and $P_{-6}=5$.
If  $|-10K_X|$ is not composed with a pencil,  then we can take $m_1=10$. By Theorem \ref{b main}(ii),   $\varphi_{-m}$ is birational onto its image for all $m\geq 40$.
If  $|-10K_X|$ is also composed with a pencil, then we know $\mu_0\leq \frac{1}{2}$ by Remark \ref{upper mu0}. Also we can prove $P_{-57}=3540>r_X(-K_X^3)57+1$ by direct computation using Reid's formula where $-K_X^3=\frac{61}{546}$. Hence we can take $m_1=57$. By Theorem \ref{b main}(ii),   $\varphi_{-m}$ is birational onto its image for all $m\geq 95$.

We complete the proof.
\end{proof}

\end{document}